\numberwithin{equation}{section}
\def\QEDopen{{\setlength{\fboxsep}{0pt}\setlength{\fboxrule}{0.2pt}\fbox{\rule[0pt]{0pt}{1.3ex}\rule[0pt]{1.3ex}{0pt}}}}
\def\QED{\QEDopen}
\newtheorem{theorem}{Theorem}%[section]
\newtheorem{fact}[theorem]{Fact}
\newtheorem{lemma}[theorem]{Lemma}
\newtheorem{proposition}[theorem]{Proposition}
\newtheorem{definition}{Definition}
\DeclareMathOperator*{\argmin}{argmin}
\def \bbR {\mathbb R}
\def \bbN {\mathbb N}
\def \sL {\mathscr{L}}
\def \sS {\mathscr{S}}
\def \bd {\bm{d}}
\def \bh {\bm{h}}
\def \bp {\bm{p}}
\def \bq {\bm{q}}
\def \bx {\bm{x}}
\def \by {\bm{y}}
\def \bz {\bm{z}}
\def \bw {\bm{w}}
\def \bu {\bm{u}}
\def \bv {\bm{v}}
\def \br {\bm{r}}
\def \bR {\bm{R}}
\def \bI {\bm{I}}
\def \bC {\bm{C}}
\def \bD {\bm{D}}
\def \bQ {\bm{Q}}
\def \bR {\bm{R}}
\def \bU {\bm{U}}
\def \bV {\bm{V}}
\def \bX {\bm{X}}
\def \blambda {\bm{\lambda}}
\def \bbRn {\bbR^{n}}
\def \bbRN {\bbR^{N}}
\def \st {\text{s.t.}}
\def \tr {\mathrm{tr}}
\def \prox {\mathrm{prox}}
\def \supp {\mathrm{supp}}
\def \gra {\mathrm{gra}\hspace{2pt}}
\def \dom {\mathrm{dom}\hspace{2pt}}
\newcommand\leqs{\leqslant}
\newcommand\geqs{\geqslant}
\title{A Globally Optimal Portfolio for m-Sparse Sharpe Ratio Maximization}
\author{%
  Yizun Lin$^1$\hspace{2em} Zhao-Rong Lai$^{1}$\thanks{Correspondence to: Zhao-Rong Lai}\hspace{2.5em}Cheng Li$^1$\\
  ${^1}$Department of Mathematics\\
  College of Information Science and Technology\\
  Jinan University, Guangzhou, China \\
  \texttt{\{linyizun,laizhr\}@jnu.edu.cn}\\
  \texttt{licheng@stu2020.jnu.edu.cn}
  % Yizun Lin\thanks{Use footnote for providing further information about author (webpage, alternative address)---\emph{not} for acknowledging funding agencies.} \\
  % examples of more authors
  % \And
  % Coauthor \\
  % Affiliation \\
  % Address \\
  % \texttt{email} \\
  % \AND
  % Coauthor \\
  % Affiliation \\
  % Address \\
  % \texttt{email} \\
  % \And
  % Coauthor \\
  % Affiliation \\
  % Address \\
  % \texttt{email} \\
  % \And
  % Coauthor \\
  % Affiliation \\
  % Address \\
  % \texttt{email} \\
}
\begin{document}

\maketitle

\begin{abstract}
The Sharpe ratio is an important and widely-used risk-adjusted return in financial engineering. In modern portfolio management, one may require an $m$-sparse (no more than $m$ active assets) portfolio to save managerial and financial costs. However, few existing methods can optimize the Sharpe ratio with the $m$-sparse constraint, due to the nonconvexity and the complexity of this constraint. We propose to convert the $m$-sparse fractional optimization problem into an equivalent $m$-sparse quadratic programming problem. The semi-algebraic property of the resulting objective function allows us to exploit the Kurdyka-{\L}ojasiewicz property to develop an efficient Proximal Gradient Algorithm (PGA) that leads to a portfolio which achieves the globally optimal $m$-sparse Sharpe ratio under certain conditions. The convergence rates of PGA are also provided. To the best of our knowledge, this is the first proposal that achieves a globally optimal $m$-sparse Sharpe ratio with a theoretically-sound guarantee.
\end{abstract}

\section{Introduction}

The Sharpe ratio (SR) \cite{SHARPratio} is an important and widely-used performance metric in finance. Suppose an investing strategy is represented by a portfolio $\bw\in \bbR^{N}$ of $N$ assets from a financial market. $\bm{\mu}\in \bbR^{N}$ and $\bm{\Sigma}\in \bbR^{N\times N}$ denote the expected return vector (in excess of the risk-free rate) and its covariance matrix for the $N$ assets, respectively. It can be seen that $\bw^\top\bm{\mu}$ and $\sqrt{\bw^\top\bm{\Sigma}\bw}$ represent the expected return and its standard deviation (i.e., risk) for the portfolio $\bw$. The original definition of SR is given as the follow quotient between return and risk:
\begin{equation}
\setlength{\abovedisplayskip}{2pt}
\setlength{\belowdisplayskip}{2pt}
\label{def:sharp0}
S_0(\bw)=\frac{\bw^\top\bm{\mu}}{\sqrt{\bw^\top\bm{\Sigma}\bw}}.
\end{equation}

Ever since the proposal of SR, how to maximize it becomes an attractive research topic. Ordinary portfolio optimization methods based on either the mean-variance approach \cite{mlpo,sparsepo} or the exponential growth rate approach \cite{SPOLC,SSPO} can reduce the portfolio risk and increase the portfolio return to some extent \cite{egrmvgap}, and hence improve the SR. On the other hand, direct SR optimization methods are also proposed. Hung et al. \cite{sharpeoptimlagran}, Yu and Xu \cite{sharpeoptimlagran2} consider the SR as a differentiable function of the portfolio, which can be solved via the augmented Lagrangian method. Pang \cite{SRlincom} converts the SR maximization under the self-financing and long-only constraints into a linear complementarity problem, which can be solved via the Parametric Linear Complementarity Technique (PLCT) and the principle pivoting algorithm \cite{SRprinpivot}. Note that PLCT requires $\mu_i>0$ for at least some asset $i$ in order to be feasible.

In modern portfolio management, it is widely-recognized that the number of selected assets should be restricted to a manageable size, in order to keep simplicity and save time and financial costs. Managerial strategies provide an approach to achieve this objective, such as the revenue driven resource allocation \cite{POmasci1}, the endowment model \cite{POmasci2}, and selling stocks after market crashes \cite{POmasci3}. However, the managerial approaches still require intensive administration and abundant experience in management and finance. Hence researchers turn to sparsity models for solutions via the computational approaches. In \cite{sparsepo}, a Sparse and Stable Markowitz Portfolio (SSMP) is proposed by imposing $\ell_1$-regularization on the portfolio.  Ao et al. \cite{SSMPmve} further propose a mean-variance model with an $\ell_1$ constraint based on a maximum-Sharpe-ratio estimation. In \cite{SSPO}, the exponential growth rate (EGR) criterion \cite{problemsetting1,uPS1,kellycri,egrmvgap} is exploited to develop a Short-term Sparse Portfolio Optimization (SSPO). Furthermore, a Short-term Sparse Portfolio Optimization with $\ell_0$-regularization (SSPO-$\ell_0$) is developed in \cite{SSPOl0}. In \cite{NSCM}, a nonlinear shrinkage of the covariance matrix is proposed to obtain an appropriate size of free parameters. Motivated by this strategy, Lai et al. \cite{SPOLC} characterize a sparse structure for covariance estimation to construct a portfolio via the machine learning approach.

The $\ell_1$-regularization and the $\ell_1$ constraint cannot control the exact number of selected assets. One has to tune the sparsity parameter to roughly adjust this number. On the other hand, suppose we want to select no more than $m$ active assets out of $N$ assets to construct a portfolio, then this can be exactly represented by the $m$-sparse (or $\ell_0$) constraint $\|\bw\|_0\leqs m$, where the $\ell_0$ norm $\|\cdot\|_0$ denotes the number of nonzero components of a vector. Although many sparsity models are established for the Markowitz portfolio, few existing methods can optimize the SR \eqref{def:sharp0} with the $\ell_0$ constraint, due to the nonconvexity and the complexity of this constraint. In addition to the $\ell_0$ constraint, other realistic constraints should also be imposed to ensure feasibility. For example, the self-financing constraint represents full re-investment and no external loans; the long-only constraint represents no short position. If all these constraints are imposed, the whole model becomes even much more difficult to solve.

To overcome these difficulties, we observe that this optimization is essentially an $m$-sparse fractional optimization that can be transformed into an equivalent $m$-sparse quadratic programming. Then the resulting objective function is semi-algebraic, so that the Kurdyka-{\L}ojasiewicz (KL) property \cite{attouch2010proximal} can be exploited to develop an efficient Proximal Gradient Algorithm (PGA) \cite{parikh2014proximal}. It converges to a portfolio which achieves the globally optimal $m$-sparse SR under certain conditions. To the best of our knowledge, this is the first proposal that achieves a globally optimal $m$-sparse SR with a theoretically-sound guarantee. Our main contributions can be summarized as follows.

%\begin{itemize}[itemsep=0pt, parsep=0pt]
\vspace{-5pt}
\begin{enumerate}[leftmargin=*,itemsep=0pt, parsep=0pt]
\item[1)] We propose to directly maximize the SR with the $\ell_0$ constraint, the self-financing constraint and the long-only constraint on the portfolio. This model aims to obtain a feasible and realistic portfolio that optimizes the SR with exact sparsity.
\item[2)] SR maximization is essentially a fractional optimization. We convert this $m$-sparse fractional optimization problem into an equivalent $m$-sparse quadratic programming problem, which reduces the difficulty of solving it.
\item[3)] We observe that the resulting objective function is semi-algebraic, thus exploit the KL property to develop an efficient PGA that leads to a globally or at least a locally optimal solution of the $m$-sparse SR maximization model. The convergence rates of PGA are also provided.
\end{enumerate}
\vspace{-5pt}

Besides the above contributions, our approach also has several advantages: $(i)$ It can be extended to a wide range of optimization problems with semi-algebraic objective functions and constraints. $(ii)$ The actual sparsity is robust to the choice of $m$. $(iii)$ It needs very little parameter tuning. $(iv)$ It does not require any external algorithms or commercial optimizers.

\section{Related Works and Existing Problems}\label{sec:relatework}
\vspace{-5pt}
There are some existing works that indirectly or directly optimize the SR to some extent via the computational approach. We introduce some examples and then analyze some unsolved problems.

\subsection{Ordinary Portfolio Optimization}\label{sec:ordinpo}

An intuitive approach is to directly optimize the portfolio, so that the expected return is maximized and/or the risk is minimized. These methods can be categorized into the mean-variance approach and the exponential growth rate approach \cite{egrmvgap}. Let $\bR\in\bbR^{T\times N}$ be the sample asset return matrix with $T$ trading times and $N$ assets, and ${\bm1}_n$ denotes the vector of $n$ ones. Brodie et al. \cite{sparsepo} propose to impose $\ell_1$-regularization on the mean-variance model, forming a Sparse and Stable Markowitz Portfolio (SSMP):
\begin{equation}\label{mod:ssmp}
\begin{split}
&\hat{\bw}=\argmin_{\bw\in\bbRN}\left\{\frac{1}{T}\| \bR\bw - \rho {\bm1}_{T} \|_2^2+ \tau\| \bw \|_1\right\},\quad \st\quad \bw^\top\hat{\bm{\mu}}=\rho,\ \bw^\top{\bm1}_N=1,
\end{split}
\end{equation}
where $\hat{\bm\mu}:=\frac{1}{T} \bR^\top{\bm1}_{T}$ denotes the column vector of sample mean returns, and $\tau\geqs 0$ is the regularization parameter. $\|\cdot\|_2$ and $\|\cdot\|_1$ denote the $\ell_2$-norm and the $\ell_1$-norm, respectively. The quadratic form $\frac{1}{T}\| \bR\bw - \rho {\bm1}_{T} \|_2^2$ actually computes the mean squared error between the sample portfolio return $\br^{(t)}\bw$ ($\br^{(t)}$ is the $t$-th row of $\bR$)
and the given return level $\rho$. Equations $\bw^\top\hat{\bm{\mu}}=\rho$ and $\bw^\top{\bm1}_N=1$ are the expected return constraint and the self-financing constraint, respectively. Model (\ref{mod:ssmp}) can be approximately solved by a surrogate model \cite{sparsepo} and the Least Absolute Shrinkage and Selection Operator (Lasso) \cite{LASSO}. Goto and Xu \cite{SSMPsparhed} also exploit the Lasso to solve a mean-variance model through sparse hedging restrictions.

Ao et al. \cite{SSMPmve} propose a maximum-Sharpe-ratio estimated and sparse regression (MAXER) to approach mean-variance efficiency. Assume there are sufficient observations $T>(N+2)$. MAXER first computes the maximum-Sharpe-ratio estimated regression response $\hat{r}_c$ as follows:
\begin{equation}
\label{eqn:SSMPmve1}
\hat{\theta}_s=\hat{\bm{\mu}}^\top\hat{\bm{\Sigma}}^{-1}\hat{\bm{\mu}},\ \hat{\theta}:=\frac{(T-N-2)\hat{\theta}_s-N}{T},\  \hat{r}_c:=\sigma\frac{1+\hat{\theta}}{\sqrt{\hat{\theta}}},
\end{equation}
where $\hat{\bm{\mu}}$ and $\hat{\bm{\Sigma}}$ denote the sample mean and the sample covariance, respectively, $\sigma$ is the risk constraint parameter. Then it adopts the Lasso to obtain the portfolio:
\begin{equation}
\label{eqn:SSMPmve2}
\hat{\bw}=\argmin_{\bw\in\bbRN}\frac{1}{T}\| \bR\bw - \hat{r}_c {\bm1}_{T} \|_2^2,\quad \st\quad \| \bw \|_1 \leqs\tau.
\end{equation}
Instead of the mean-variance approach, our method takes an essentially different objective that directly maximizes the SR in (\ref{def:sharp0}). Besides, our method does not require $T>(N+2)$.

Based on the exponential growth rate criterion \cite{problemsetting1,uPS1,kellycri,egrmvgap},  Lai et al. \cite{SSPO} propose to minimize a kind of negative potential return $\bw^\top\bm{\varphi}$ with $\ell_1$-regularization but without any risk term, forming a Short-term Sparse Portfolio Optimization (SSPO) model
\begin{equation}\label{eqn:sparsemodel2}
\hat{\bw}=\argmin_{\bw\in\bbRN} \left\{\bw^\top\bm{\varphi}+\tau\|\bw\|_1\right\},\quad\st\quad \bw^\top{\bm1}_N=1.
\end{equation}
It develops an unconstrained augmented Lagrangian with the existence of a saddle point that can be solved by the alternating direction method of multipliers (ADMM). Luo et al. \cite{SSPOl0} further propose the SSPO-$\ell_0$ model
\begin{equation}
\label{eqn:sparsemodell0}
\hat{\bw}=\argmin_{\bw\in \Delta} \left\{\bw^\top\bm{\varphi}+\tau\|\bw\|_0\right\},\quad \Delta:=\left\{\bw\in\bbR^N\big|\hspace{2pt} \bw\geqs{\bm0}_N\ \mbox{and}\ \bw^\top{\bm1}_N=1\right\},
\end{equation}
where $\Delta$ is the $N$-dimensional simplex. This simplex constraint $\bw\in \Delta$ is the combination of the long-only and the self-financing constraints. Under this constraint, the $\ell_0$-regularization problem (\ref{eqn:sparsemodell0}) has a closed-form solution $\tilde{\mathbb{I}}_{\bm{\varphi}}^{min}:=\left\{i\in\bbN_{N}\Big|\hspace{2pt}\varphi_i\leqs\min_{j\in\bbN_{N}}\varphi_j+\epsilon\right\}$ with a tolerance $\epsilon\geqs 0$, where $\bbN_N:=\{1,2,\dots,N\}$.

On the other hand, Lai et al. \cite{SPOLC} propose a rank-one covariance estimator based on the operator space decomposition, in order to capture the rapidly-changing risk structure in the financial market:
\begin{equation*}
\bX=\bV_J\Xi\bU_J^\top,\ \bD=\Xi^2-\frac{1}{T}\Xi\bV_J^\top{\bm1}_T{\bm1}_T^\top\bV_J\Xi,\  \zeta_1^*=\left(\frac{\tr(\bD)}{N(T-1)}\right)^{-\frac{1}{2}}\theta_1,\ \hat{\bm{\Sigma}}_{RO}:=\bu_{1}\zeta_1^*\bu_{1}^\top,
\end{equation*}
where $\bX=\bR+{\bm1}_{T\times N}$ denotes the price relative matrix and $\bV_J\Xi\bU_J^\top$ is its singular value decomposition (SVD), $\theta_1$ and $\bu_{1}$ are the largest eigenvalue and its eigenvector, respectively.

Although the above portfolio optimization methods may partly improve the SR, they may not be competitive to direct SR optimization. Hence direct SR optimization methods should still be developed and investigated.

\subsection{Sharpe Ratio Optimization}
\label{sec:sroptim}
Pang \cite{SRlincom} proposes to optimize the following SR model:
\begin{equation}
\label{eqn:SRlincom0}
\max_{\bw\in\bbRN} S_0(\bw), \quad\st\quad \bw\in\Delta,\ \bC\bw\leqs\bd,
\end{equation}
where $S_0(\bw)$ is defined by \eqref{def:sharp0}, $\bC\in \bbR^{l\times N}$ and $\bd\in \bbR^{l}$ form a linear constraint for $\bw$. It can be transformed into the following equivalent parametric linear complementarity problem:
\begin{equation}
\label{eqn:SRlincom}
\begin{cases}
\bu=-\bm{\mu}+\bm{\Sigma}\bw+(\bC^\top-{\bm1}_N\bd^\top)\by \geqs{\bm0}_N, \quad \bw\geqs{\bm0}_N,\\
\bv=-(\bC-\bd{\bm1}_N^\top)\bw\geqs{\bm0}_l, \quad \by\geqs{\bm0}_l,\\
\bu^\top\bw=\bv^\top\by=0.
\end{cases}
\end{equation}
Problem \eqref{eqn:SRlincom} can be efficiently solved by the principle pivoting algorithm \cite{SRprinpivot}, but it requires $\mu_i>0$ for at least some asset $i$ in order to be feasible. Moreover, if we aim to construct an $m$-sparse $\bw$, this approach becomes invalid. In Section \ref{sec:PGA}, we will convert the $m$-sparse SR optimization into an equivalent $m$-sparse quadratic programming, and the latter is still a nonconvex optimization. We further elaborate a proximal gradient algorithm to obtain a globally or locally optimal SR.

Another viable approach is to consider the SR as a function of the portfolio and directly optimize it under some realistic constraints. Hung et al. \cite{sharpeoptimlagran} propose the following IPSRM-D model to optimize the SR:
\begin{equation}\label{eq:ipsrm}
\max_{\bw\in \Delta}\left\{\sS(\bw):=\frac{\bw^\top\bm{\mu}+\kappa_1\bw^\top\bU\bw}{\bw^\top\bD\bw}+\kappa_2\bw^\top(\bm{1}_N-\bw)\right\},
\end{equation}
where $\bU\in\bbR^{N\times N}$ and $\bD\in\bbR^{N\times N}$ are upside and downside risk matrices, respectively, $\bw^\top(\bm{1}_N-\bw)$ is a diversification term. $\kappa_1$ and $\kappa_2$ are hyperparameters that control the strength of upside risk and diversification, respectively. Interested readers can further refer to \cite{sharpeoptimlagran2} for some practical estimators for $\bm{\mu}$, $\bU$ and $\bD$.

However, $\sS(\bw)$ in model (\ref{eq:ipsrm}) is different from the original SR $S_0(\bw)$ (\ref{def:sharp0}) in several significant parts. First, $\sS(\bw)$ uses second-order moments $\bw^\top\bU\bw$ and $\bw^\top\bD\bw$ as risk metrics, but $S_0(\bw)$ uses the first-order moment $\sqrt{{\bw}^\top\bm{\Sigma}{\bw}}$ instead. In general, a first-order moment is more appropriate because the expected return $\bw^\top\bm{\mu}$ should remain in the same order of magnitude as $\sqrt{{\bw}^\top\bm{\Sigma}{\bw}}$. Second, the numerator of $S_0(\bw)$ does not contain any risk term, while the numerator of $\sS(\bw)$ contains $\bw^\top\bU\bw$. This may change the meaning of SR as an equilibrium point in the efficient frontier based on the CAPM theory \cite{CAPM}. These facts may affect the performance of SR optimization.

Another problem is the lack of effective solving algorithms that could really maximize the SR under constraints. A conventional way is to adopt gradient methods, since $\sS(\bw)$ is a differentiable function when $\bw\ne \bm0_N$.  Hung et al. \cite{sharpeoptimlagran}, Yu and Xu \cite{sharpeoptimlagran2} propose to adopt the augmented Lagrangian method to optimize (\ref{eq:ipsrm}). Though they do not specify which form of Lagrangian models is used, we give the following one without loss of generality:
\begin{equation}\label{eq:ipsrmlagrange}
\sL(\bw,\blambda):=\sS(\bw)+\frac{\varrho}{2} (\bw^{\top}\bm{1}_N-1)^2+\blambda^\top\bw,
\end{equation}
where $\frac{\varrho}{2} (\bw^{\top}\bm{1}_N-1)^2$ with hyperparameter $\varrho\leqs 0$ is a regularization term for the self-financing constraint, and $\blambda\in \bbR^N_+$ is the dual variable with respect to (w.r.t.) $\bw$ for the long-only constraint $\bw\geqs{\bm0_N}$. $\bbR_+^N$ denotes the set of all $N$-dimensional nonnegative vectors. The update scheme is
\begin{equation}\label{eq:ipsrmlagrangeupdate}
\begin{cases}
\bw^{(k+1)}=\bw^{(k)}+\eta_1\nabla_{\bw} \sL(\bw^{(k)},\blambda^{(k)}),\\
\blambda^{(k+1)}=\blambda^{(k)}-\eta_2\nabla_{\blambda} \sL(\bw^{(k+1)},\blambda^{(k)}),
\end{cases}
\end{equation}
where $\eta_1,\eta_2\geqs 0$ are update step sizes. Note that $\sS$ is a nonconvex function w.r.t. $\bw$, and
the augmented Lagrangian method is a surrogate method that approximates model (\ref{eq:ipsrm}). Hence (\ref{eq:ipsrmlagrangeupdate}) does not necessarily lead to the maximum SR. Worse still, due to the augmented term $\frac{\varrho}{2} (\bw^{\top}\bm{1}_N-1)^2$, (\ref{eq:ipsrmlagrangeupdate}) may not even decrease the objective function $\sS$. Moreover, the Lagrangian $\sL(\bw^{(k)},\blambda^{(k)})$ is increased by the $\bw^{(k)}$ updates but decreased by the $\blambda^{(k)}$ updates, hence (\ref{eq:ipsrmlagrangeupdate}) cannot guarantee convergence to a point $(\bw^*,\blambda^*)$ without a thorough investigation of the update scheme. To summarize, the augmented Lagrangian method and most existing gradient methods cannot guarantee global or local optimality for model (\ref{eq:ipsrm}).

\section{PGA for m-Sparse Sharpe Ratio Maximization}\label{sec:PGA}

In this section, we formulate the maximization problem of SR as a nonconvex fractional optimization under constraints. Instead of directly solving the proposed model, we develop an efficient proximal gradient algorithm to solve a simpler surrogate model (subtraction form) that is equivalent to the original constrained fractional optimization model.

\subsection{m-Sparse Sharpe Ratio Maximization Model}

In order to retain the risk premium meaning of SR in finance, we directly maximize the original SR in (\ref{def:sharp0}) instead of a variant like (\ref{eq:ipsrm}). In the perspective of statistical estimation, suppose we have a sample asset return (in excess of the risk-free rate) matrix $\bR\in\bbR^{T\times N}$ with $T$ trading times and $N$ assets. Then the original SR (\ref{def:sharp0}) can be represented by
\begin{equation}\label{eq:SRdef1}
S(\bw):=\frac{\frac{1}{T}\bm{1}_T^{\top}\bR\bw}{\sqrt{\frac{1}{T-1}\|\bR\bw-(\frac{1}{T}\bm{1}_T^{\top}\bR\bw)\bm{1}_T\|_2^2+\epsilon\|\bw\|_2^2}},
\end{equation}
where $\epsilon\|\bw\|_2^2$ is a regularization term for a positive definite $\bQ_{\epsilon}$ defined in \eqref{def:pQQeps}. The parameter $\epsilon$ can be an arbitrarily-small positive parameter, whose effect on the risk term can be negligible. To simplify the notation, we define
\begin{equation}\label{def:pQQeps}
\bp:=\frac{1}{T}\bR^{\top}\bm{1}_T,\ \ \bQ:=\frac{1}{\sqrt{T-1}}\left(\bR-\frac{1}{T}\bm{1}_{T\times T}\bR\right)\ \ \mbox{and}\ \ \bQ_{\epsilon}:=\bQ^\top\bQ+\epsilon\bI.
\end{equation}
Then the maximization of SR under the $m$-sparse, long-only and self-financing constraints is given by
\begin{equation}\label{model:Sharpeiota}
\max_{\substack{\bw\in\Delta\\\|\bw\|_0\leqs m}}\ S(\bw):=\frac{\bp^\top\bw}{\sqrt{\bw^\top\bQ_{\epsilon}\bw}},
\end{equation}
where the simplex $\Delta$ is defined in \eqref{eqn:sparsemodell0}. Note that minimizing $-S(\bw)$ under the constraint $\|\bw\|_0\leqs m$ is essentially quite different from minimizing the $\ell_0$-regularization version $-S(\bw)+\tau\|\bw\|_0$ with some positive $\tau$. In general, the latter is easier because it incorporates the $\ell_0$ norm into the objective function and enlarges the feasible set by dropping the constraint $\|\bw\|_0\leqs m$. We simply call \eqref{model:Sharpeiota} the $m$-Sparse Sharpe Ratio Maximization (mSSRM) model. In fact, to solve the mSSRM model, it suffices to solve the following simpler constrained minimization model
\begin{equation}\label{spSRsubtractmod}
\min_{\substack{\bv\geqs{\bm0}_N\\\|\bv\|_0\leqs m}}\left\{\frac{1}{2}\bv^\top\bQ_{\epsilon}\bv-\bp^\top\bv\right\}.
\end{equation}
To see this, we establish the relation between the solutions of these two models in the following theorem, whose proof is provided in Appendix \ref{proof:spequifracsubtr}. We define the constraint sets in model \eqref{model:Sharpeiota} and \eqref{spSRsubtractmod} by
\begin{equation}\label{def:Omega1}
\Omega_1:=\{\bw\in\Delta|\hspace{2pt}\|\bw\|_0\leqs m\}\ \ \text{and} \ \ \Omega:=\{\bv\in\bbRN|\hspace{2pt}\bv\geqs{\bm0}_N\ \mbox{and}\ \|\bv\|_0\leqs m\},
\end{equation}
respectively. It is obvious that $\Omega_1\subsetneqq\Omega$.

\vspace{-5pt}
\begin{theorem}\label{thm:spequifracsubtr}
Suppose that there exists some $\tilde{\bw}\in\Omega_1$ such that $\bp^\top\tilde{\bw}>0$. If $\hat{\bv}$ is an optimal solution of model \eqref{model:Sharpeiota}, then $\frac{\bp^\top\hat{\bv}}{\hat{\bv}^\top\bQ_{\epsilon}\hat{\bv}}\hat{\bv}$ is an optimal solution of model \eqref{spSRsubtractmod}. Conversely, if $\hat{\bv}$ is an optimal solution of model \eqref{spSRsubtractmod}, then $\frac{\hat{\bv}}{\hat{\bv}^\top{\bm1}_N}$ is an optimal solution of model \eqref{model:Sharpeiota}.
\end{theorem}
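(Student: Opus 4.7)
The plan is to exploit the degree-zero positive homogeneity of $S$ together with the fact that the $\ell_0$ constraint is invariant under positive scaling. Every nonzero $\bv\in\Omega$ can be uniquely written as $\bv=t\bw$ with $\bw:=\bv/(\bv^\top\bm{1}_N)\in\Omega_1$ and $t:=\bv^\top\bm{1}_N>0$, and $\|\bv\|_0=\|\bw\|_0$. Writing $f(\bv):=\tfrac12\bv^\top\bQ_{\epsilon}\bv-\bp^\top\bv$, the key computation is that for each fixed $\bw\in\Omega_1$, the univariate quadratic $g(t):=f(t\bw)=\tfrac{t^2}{2}\bw^\top\bQ_{\epsilon}\bw-t\,\bp^\top\bw$ restricted to $t\ge0$ attains its minimum at $t^\star_\bw:=\bp^\top\bw/(\bw^\top\bQ_{\epsilon}\bw)$ with value $-\tfrac12 S(\bw)^2$ when $\bp^\top\bw>0$, and at $t=0$ with value $0$ when $\bp^\top\bw\le0$. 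Equivalently, $\min_{t\ge0}g(t)=-\tfrac12[\max\{S(\bw),0\}]^2$.

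The hypothesis gives $\max_{\bw\in\Omega_1}S(\bw)\ge S(\tilde\bw)>0$, which forces any optimum $\hat\bw$ of \eqref{model:Sharpeiota} to satisfy $\bp^\top\hat\bw>0$, and likewise yields $\min_{\bv\in\Omega}f(\bv)\le -\tfrac12 S(\tilde\bw)^2<0=f(\bm{0}_N)$, ruling out $\bv=\bm{0}_N$ as a minimizer of \eqref{spSRsubtractmod}. These two strict-positivity facts license the rescalings used in both directions.

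For the forward direction, given an optimal $\hat\bw$ of \eqref{model:Sharpeiota}, I would set $\hat\bv:=t^\star_{\hat\bw}\hat\bw\in\Omega$, so that $f(\hat\bv)=-\tfrac12 S(\hat\bw)^2$. For an arbitrary $\bv\in\Omega$, either $\bv=\bm{0}_N$ (giving $f(\bv)=0\ge f(\hat\bv)$) or $\bv=t\bw$ with $\bw\in\Omega_1$ and $t>0$, in which case $f(\bv)\ge -\tfrac12[\max\{S(\bw),0\}]^2\ge -\tfrac12 S(\hat\bw)^2=f(\hat\bv)$ by optimality of $\hat\bw$. For the converse, given an optimal $\hat\bv$ of \eqref{spSRsubtractmod}, $\hat\bv\ne\bm{0}_N$, so $\hat\bw:=\hat\bv/(\hat\bv^\top\bm{1}_N)\in\Omega_1$ is well defined. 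The ray $\{t\hat\bw:t\ge0\}$ lies in $\Omega$, so optimality of $\hat\bv$ along it combined with $f(\hat\bv)<0$ forces $\bp^\top\hat\bw>0$, $\hat\bv^\top\bm{1}_N=t^\star_{\hat\bw}$ and $f(\hat\bv)=-\tfrac12 S(\hat\bw)^2$. For any competitor $\bw\in\Omega_1$, the point $t^\star_\bw\bw$ (or $\bm{0}_N$ when $\bp^\top\bw\le0$) lies in $\Omega$ and gives $f(\hat\bv)\le-\tfrac12[\max\{S(\bw),0\}]^2$, from which $S(\hat\bw)\ge S(\bw)$ in both sign cases.

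The principal obstacle is handling the sign of $\bp^\top\bw$ uniformly: without the hypothesis on $\tilde\bw$, the minimum of $f$ over $\Omega$ could be attained only at $\bm{0}_N$, so the rescaling $\hat\bv\mapsto\hat\bv/(\hat\bv^\top\bm{1}_N)$ would be undefined. A minor care point is that $t^\star_{\hat\bw}\hat\bw\in\Omega$, which is immediate because positive scaling preserves both nonnegativity and the $\ell_0$ constraint.
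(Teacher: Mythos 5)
Your proposal is correct and follows essentially the same route as the paper's proof: both decompose the problem along rays, use the closed-form optimal scaling $t^\star_{\bw}=\bp^\top\bw/(\bw^\top\bQ_{\epsilon}\bw)$ with minimal value $-\tfrac12 S(\bw)^2$ (the paper's identity $\tfrac12\bu^\top\bQ_{\epsilon}\bu-\bp^\top\bu+\tfrac12(\bp^\top\bu)^2/(\bu^\top\bQ_{\epsilon}\bu)\geqs 0$ is exactly your statement $f(\bu)\geqs\min_{t\geqs 0}f(t\bw)$), and your observation that ray-optimality forces $\hat{\bv}=t^\star_{\hat\bw}\hat\bw$ is precisely the paper's Lemma asserting $\bp^\top\hat{\bv}=\hat{\bv}^\top\bQ_{\epsilon}\hat{\bv}>0$. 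Your uniform treatment of the sign cases via $\min_{t\geqs0}g(t)=-\tfrac12[\max\{S(\bw),0\}]^2$ is a tidy packaging, but the substance is the same.
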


Defining the indicator function $\iota_{\Omega}$ by
\begin{equation}\label{def:iotaOmegam}
\iota_{\Omega}(\bv):=\begin{cases}
0,&if\ \bv\in\Omega;\\
+\infty,&otherwise,
\end{cases}
\end{equation}
we can rewrite model \eqref{spSRsubtractmod} as the following two-term unconstrained minimization model:
\begin{equation}\label{targaltermod}
\min_{\bv\in\bbRN}\left\{f(\bv)+\iota_{\Omega}(\bv)\right\},\ \text{where} \ f(\bv):=\frac{1}{2}\bv^\top\bQ_{\epsilon}\bv-\bp^\top\bv.
\end{equation}

We then turn to solving model \eqref{targaltermod} instead of the mSSRM model in \eqref{model:Sharpeiota}.

\subsection{Proximal Gradient Algorithm}

To develop a proximal gradient algorithm for solving model \eqref{targaltermod}, we recall the notion of proximity operator, and then establish the relation between the solution of model \eqref{targaltermod} and the proximity characterization \eqref{eq:proxiotaOmegav} in Theorem \ref{thm:proxchar}. For a proper function $\psi:\bbRn\to\overline{\bbR}$, its proximity operator at $\bx\in\bbRn$ is defined by $\prox_{\psi}(\bx):=\argmin_{\bu\in\bbRn}\left\{\frac{1}{2}\Vert
\bu-\bx\Vert_2^2+\psi(\bu)\right\}$. We remark that for function $\psi$ that is nonconvex, $\prox_{\psi}(\bx)$ may not be unique. Throughout this paper, the formula $\bh=\prox_{\psi}(\bx)$ represents $\bh\in\prox_{\psi}(\bx)$. For $\bv^*\in\bbRN$ and $\delta>0$, we denote by $B(\bv^*;\delta)$ the neighborhood of $\bv^*$ with radius $\delta$. If there exists some $\delta>0$ such that $f(\bv^*)\leqs f(\bv)$ holds for all $\bv\in\Omega\cap B(\bv^*;\delta)$, then we say that $\bv^*$ is a locally optimal solution of model \eqref{targaltermod}.
\vspace{-5pt}
\begin{theorem}\label{thm:proxchar}
Let function $\iota_{\Omega}$ and $f$ be defined by \eqref{def:iotaOmegam} and \eqref{targaltermod}, respectively. If $\bv^*$ is a globally optimal solution of model \eqref{targaltermod}, then for any $\alpha\in\left(0,\frac{1}{\|\bQ_{\epsilon}\|_2}\right]$,
\begin{equation}\label{eq:proxiotaOmegav}
\bv^*=\prox_{\iota_{\Omega}}\left(\bv^*-\alpha\nabla f(\bv^*)\right).
\end{equation}
Conversely, we have the following two statements:
\vspace{-5pt}
\begin{itemize}[itemsep=0pt, parsep=2pt]
\item[$(i)$] If $\alpha\geqs\frac{1}{\epsilon}$ and \eqref{eq:proxiotaOmegav} holds, then $\bv^*$ is a globally optimal solution of model \eqref{targaltermod}.
\item[$(ii)$] For any $\alpha>0$, if \eqref{eq:proxiotaOmegav} holds, then $\bv^*$ is a locally optimal solution of model \eqref{targaltermod}.
\end{itemize}
\vspace{-5pt}
\end{theorem}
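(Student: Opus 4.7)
The strategy relies on three observations. First, $f(\bv) = \tfrac{1}{2}\bv^\top \bQ_{\epsilon} \bv - \bp^\top \bv$ has Hessian $\bQ_{\epsilon}$ whose spectrum lies in $[\epsilon, \|\bQ_{\epsilon}\|_2]$, so $f$ is simultaneously $\epsilon$-strongly convex and $\|\bQ_{\epsilon}\|_2$-smooth, yielding matching quadratic lower and upper bounds at $\bv^*$. Second, since $\iota_{\Omega}$ is an indicator, $\prox_{\iota_{\Omega}}$ coincides with the Euclidean projection onto $\Omega$, so \eqref{eq:proxiotaOmegav} is equivalent to $\|\bv^* - \bz\|_2^2 \leqs \|\bu - \bz\|_2^2$ for every $\bu \in \Omega$, where $\bz := \bv^* - \alpha \nabla f(\bv^*)$. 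Third, expanding this projection inequality gives the key first-order relation
\[
\langle \nabla f(\bv^*), \bu - \bv^*\rangle \geqs -\tfrac{1}{2\alpha}\|\bu - \bv^*\|_2^2 \qquad \text{for every } \bu \in \Omega.
\]

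With these tools the first two claims are short. For the forward direction, I combine the smoothness upper bound (valid because $\tfrac{1}{2\alpha} \geqs \tfrac{\|\bQ_{\epsilon}\|_2}{2}$ when $\alpha \leqs 1/\|\bQ_{\epsilon}\|_2$) with the global optimality of $\bv^*$: chaining $f(\bv^*) \leqs f(\bu) \leqs f(\bv^*) + \langle \nabla f(\bv^*), \bu - \bv^*\rangle + \tfrac{1}{2\alpha}\|\bu - \bv^*\|_2^2$ reproduces exactly the first-order relation above and hence \eqref{eq:proxiotaOmegav}. For converse $(i)$, I substitute the first-order relation into the strong-convexity lower bound to obtain $f(\bu) - f(\bv^*) \geqs \tfrac{1}{2}(\epsilon - 1/\alpha)\|\bu - \bv^*\|_2^2$, whose coefficient is non-negative exactly when $\alpha \geqs 1/\epsilon$, yielding global optimality.

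For converse $(ii)$ the strong-convexity shortcut fails when $\alpha$ is small, so I exploit the combinatorial geometry of $\Omega$ via the componentwise form of the projection. Let $S^* := \supp(\bv^*)$. For each $i \in S^*$, the identity $v_i^* = \max(z_i, 0) > 0$ forces $z_i = v_i^*$ and hence $[\nabla f(\bv^*)]_i = 0$. I then split on $|S^*|$. If $|S^*| < m$, the projection has unused support slots, so any $j \notin S^*$ with $z_j > 0$ would be brought into the support at strictly smaller cost; therefore $z_j \leqs 0$, giving $[\nabla f(\bv^*)]_j \geqs 0$. For every $\bu \in \Omega$, non-negativity of $\bu$ then produces $\langle \nabla f(\bv^*), \bu - \bv^*\rangle = \sum_{j \notin S^*} [\nabla f(\bv^*)]_j u_j \geqs 0$, and the strong-convexity lower bound already delivers $f(\bu) \geqs f(\bv^*)$ globally on $\Omega$. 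If $|S^*| = m$, no such sign information is available; instead I pick $\delta < \min_{i \in S^*} v_i^*$ so that every $\bu \in \Omega \cap B(\bv^*;\delta)$ satisfies $S^* \subseteq \supp(\bu)$, and combined with $|\supp(\bu)| \leqs m = |S^*|$ this forces $\supp(\bu) = S^*$ and $u_j = 0$ for $j \notin S^*$. The cross term vanishes again, and strong convexity yields the local inequality. The hardest step is precisely this final sub-case: the absence of a KKT-style sign condition off the support must be compensated by the combinatorial observation that, in a small ball around $\bv^*$, the $m$-sparse constraint pins the support to exactly $S^*$.
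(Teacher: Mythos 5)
Your proof is correct. For the forward direction and converse $(i)$ you follow essentially the same route as the paper: rewrite \eqref{eq:proxiotaOmegav} as the variational inequality $\langle\nabla f(\bv^*),\bu-\bv^*\rangle+\frac{1}{2\alpha}\|\bu-\bv^*\|_2^2\geqs0$ for all $\bu\in\Omega$, then invoke the descent lemma (using $\alpha\leqs 1/\|\bQ_{\epsilon}\|_2$) in one direction and $\epsilon$-strong convexity (using $\alpha\geqs 1/\epsilon$) in the other. For converse $(ii)$, however, your argument is genuinely different. The paper splits a small ball around $\bv^*$ into the set where $\langle\nabla f(\bv^*),\bu-\bv^*\rangle=0$ and its complement, argues that on the complement the linear term dominates the quadratic term for small $\delta$, deduces $\langle\nabla f(\bv^*),\bu-\bv^*\rangle\geqs0$ on $B(\bv^*;\delta)\cap\Omega$, and concludes by convexity. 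You instead extract componentwise stationarity from the explicit form of $\prox_{\iota_{\Omega}}$: $\nabla_i f(\bv^*)=0$ on $\supp(\bv^*)$, the sign condition $\nabla_j f(\bv^*)\geqs0$ off the support when $m_{\bv^*}<m$ (which actually yields \emph{global} optimality on $\Omega$ in that sub-case, more than item $(ii)$ asks for), and, when $m_{\bv^*}=m$, the combinatorial observation that the $m$-sparse constraint pins $\supp(\bu)$ to $\supp(\bv^*)$ for all $\bu\in\Omega$ in a sufficiently small ball, so the cross term vanishes and convexity finishes the job. Your route buys robustness: the paper's claim that the linear term uniformly dominates the quadratic term off the hyperplane $\langle\nabla f(\bv^*),\bu-\bv^*\rangle=0$ is delicate, since for directions nearly orthogonal to $\nabla f(\bv^*)$ the linear term need not dominate for any uniform $\delta$, whereas your case analysis sidesteps this entirely and, as a bonus, derives exactly the support and sign conditions that the paper itself reuses later in the proof of Theorem \ref{thm:globalopt}.
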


The proof of Theorem \ref{thm:proxchar} is provided in Appendix \ref{proof:proxchar}. Based on this theorem, the Proximal Gradient Algorithm (PGA) for solving model \eqref{targaltermod} can be given by the following iterative scheme:
\begin{equation}\label{PGAiter1}
\bv^{(k+1)}=\prox_{\iota_{\Omega}}\left(\bv^{(k)}-\alpha\nabla f(\bv^{(k)})\right),\ \ \mbox{where}\ k\in\bbN,\ \alpha>0,\ \bv^{(0)}\in\Omega.
\end{equation}
We then compute the closed form of $\prox_{\iota_{\Omega}}$. For a vector $\bv\in\bbRN$, we denote by $m_{\bv}$ and $J_{\text{pos}}^{\bv}$ the number of positive components and the index set of positive components of $\bv$. If $m_{\bv}\geqs m$, then we denote by $J_{m\text{-pos}}^{\bv}$ an index set of the $m$-largest positive components of $\bv$. Specifically, by letting $\{v_{j_i}\}_{i\in\bbN_N}$ be an rearrangement of $\{v_{j}\}_{j\in\bbN_N}$ such that $ v_{j_1}\geqs v_{j_2}\geqs\cdots\geqs v_{j_N}$, then $J_{m\text{-pos}}^{\bv}:=\{j_1,j_2,\ldots,j_m\}$. Throughout this paper, for a given vector $\bv\in\bbRN$, we shall always compute $\prox_{\iota_{\Omega}}(\bv)$ according to the following proposition. Its proof is given in Appendix \ref{proof:proxiotaOmega}.

\vspace{-5pt}
\begin{proposition}\label{prop:proxiotaOmega}
Let $\iota_{\Omega}$ be defined by \eqref{def:iotaOmegam}, $\bv\in\bbRN$, and define the index set $J^{\bv}$ by
\begin{equation*}
J^{\bv}=\begin{cases}
J_{m\text{-pos}}^{\bv},&if\ m_{\bv}>m;\\
J_{\text{pos}}^{\bv},&if\ m_{\bv}\leqs m.
\end{cases}
\end{equation*}
Then the vector $\bh$ given by $h_j=\begin{cases}
v_j,&if\ j\in J^{\bv};\\
0,&if\ j\in \bbN_N\backslash J^{\bv}
\end{cases}$\ \
satisfies that $\bh\in\prox_{\iota_{\Omega}}(\bv)$.
\end{proposition}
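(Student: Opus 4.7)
The plan is to unpack the proximity operator as a constrained least-squares problem and exploit its separability across coordinates. By definition,
\[
\prox_{\iota_{\Omega}}(\bv)=\argmin_{\bu\in\Omega}\tfrac{1}{2}\|\bu-\bv\|_2^2,
\]
so I would first note that the objective splits as $\sum_{j\in S}(u_j-v_j)^2+\sum_{j\notin S}v_j^2$ for any choice of support $S=\supp(\bu)$. This suggests a two-stage strategy: minimize first over the values of $\bu$ with $S$ fixed, and then minimize over admissible supports $S\subseteq\bbN_N$ with $|S|\leqs m$.

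For the inner minimization with $S$ fixed, I would observe that each coordinate $j\in S$ contributes $(u_j-v_j)^2$ over the constraint $u_j\geqs 0$, whose minimizer is $u_j^*=\max(v_j,0)$ with residual $[\max(-v_j,0)]^2$. Combining with the $j\notin S$ contributions gives the reduced objective
\[
\Phi(S)\;=\;\sum_{j=1}^N v_j^2 \;-\; \sum_{j\in S}[\max(v_j,0)]^2,
\]
so the outer problem reduces to maximizing $\sum_{j\in S}[\max(v_j,0)]^2$ subject to $|S|\leqs m$. Since only strictly positive components of $\bv$ contribute to this sum, and a positive component contributes more when its value is larger, the optimal support must be chosen among the positive indices, taking the top-$m$ of them when $m_{\bv}>m$ and taking all of them when $m_{\bv}\leqs m$. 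This is exactly the set $J^{\bv}$ in the proposition's statement.

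It then remains to check that the proposed $\bh$ coincides with the minimizer obtained in the two-stage argument: on $J^{\bv}$ every entry $v_j$ is strictly positive, so $u_j^*=\max(v_j,0)=v_j=h_j$; off $J^{\bv}$, $h_j=0$. The vector $\bh$ is nonnegative and has at most $m$ nonzero entries, so $\bh\in\Omega$, and it achieves the minimal value $\Phi(J^{\bv})$, proving $\bh\in\prox_{\iota_{\Omega}}(\bv)$.

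There is no significant obstacle in this proof; the only mild care needed is the case distinction on $m_{\bv}$ versus $m$ and the observation that nonpositive entries of $\bv$ never belong to an optimal support (so that the proximity operator is generally set-valued only when there are ties among positive entries in the top-$m$ selection, consistent with the paper's convention $\bh=\prox_{\psi}(\bx)$ meaning $\bh\in\prox_{\psi}(\bx)$).
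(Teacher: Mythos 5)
Your proof is correct and follows essentially the same route as the paper's: both exploit the coordinate-wise separability of $\|\bu-\bv\|_2^2$ to show that any $\bu\in\Omega$ incurs at least the residual $\sum_j v_j^2$ over the non-support and negative indices, and that this is minimized by keeping the top-$m$ (or all) positive entries of $\bv$. Your two-stage organization (inner minimization over values with $u_j^*=\max(v_j,0)$, then outer selection of the support) is just a cleaner packaging of the paper's direct comparison of index sets, so no substantive difference.
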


\subsection{Convergence Analysis of PGA}

In this subsection, we delve into the convergence analysis of PGA. We aim to demonstrate that PGA possesses the capability to converge to a globally optimal solution of model \eqref{spSRsubtractmod}. The limit point obtained by PGA can also yield a globally optimal solution of the original model \eqref{model:Sharpeiota}, under certain conditions. We also demonstrate the convergence rates of PGA.

Firstly, we introduce a proposition that illustrates the convergence and monotonic decreasing behavior of the objective function values for the iterative sequence, as well as the vanishing gap between consecutive iterates. The proof of this proposition is provided in Appendix \ref{proof:Fdecr}.

\begin{proposition}\label{prop:Fdecr}
Let function $\iota_{\Omega}$ and $f$ be defined by \eqref{def:iotaOmegam} and \eqref{targaltermod}, respectively, and let $F:=f+\iota_{\Omega}$. If $\alpha\in\left(0,\frac{1}{\|\bQ_{\epsilon}\|_2}\right)$, then for arbitrary initial vector $\bv^{(0)}\in\bbRN$, the sequence $\{\bv^{(k)}\}_{k\in\bbN}$ generated by PGA satisfies the following properties:
\begin{itemize}[itemsep=0pt, parsep=0pt]
\item[$(i)$] $\bv^{(k)}\in\Omega$,\ \ for all $k\in\bbN$;
\item[$(ii)$] $F(\bv^{(k+1)})+a\|\bv^{(k+1)}-\bv^{(k)}\|_2^2\leqs F(\bv^{(k)})$ for all $k\in\bbN$, where $a:=\frac{1}{2}\left(\frac{1}{\alpha}-\|\bQ_{\epsilon}\|_2\right)>0$;
\item[$(iii)$] $\lim_{k\to\infty}F(\bv^{(k)})$ exists;
\item[$(iv)$] $\lim_{k\to\infty}\|\bv^{(k+1)}-\bv^{(k)}\|_2=0$.
\end{itemize}
\vspace{-5pt}
\end{proposition}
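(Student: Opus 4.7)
The plan is to argue the four items in the order given, since each one builds on the previous. The proof is a standard proximal-gradient descent argument adapted to the indicator-function setting, and the key structural input is that the gradient of $f$ is globally Lipschitz with constant $\|\bQ_{\epsilon}\|_2$ (because $\nabla f(\bv)=\bQ_{\epsilon}\bv-\bp$ is affine with Hessian $\bQ_{\epsilon}$).

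For item $(i)$, I would invoke Proposition \ref{prop:proxiotaOmega} directly: the explicit formula for $\bh\in\prox_{\iota_{\Omega}}(\bv)$ gives a vector with nonnegative components and at most $m$ nonzeros, hence $\bh\in\Omega$. Applied to the PGA update \eqref{PGAiter1}, this yields $\bv^{(k+1)}\in\Omega$ for every $k\geqs 0$, so $\iota_{\Omega}(\bv^{(k)})=0$ along the tail of the sequence.

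For item $(ii)$, the argument has two pieces. First, since $\bv^{(k+1)}\in\prox_{\iota_{\Omega}}(\bv^{(k)}-\alpha\nabla f(\bv^{(k)}))$, comparing the objective value of the proximal subproblem at $\bv^{(k+1)}$ to its value at the feasible point $\bv^{(k)}$ gives
\begin{equation*}
\tfrac{1}{2}\|\bv^{(k+1)}-\bv^{(k)}+\alpha\nabla f(\bv^{(k)})\|_2^2\leqs \tfrac{\alpha^2}{2}\|\nabla f(\bv^{(k)})\|_2^2,
\end{equation*}
which, after expanding, yields $\langle \nabla f(\bv^{(k)}),\bv^{(k+1)}-\bv^{(k)}\rangle\leqs -\tfrac{1}{2\alpha}\|\bv^{(k+1)}-\bv^{(k)}\|_2^2$. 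Second, because $\nabla f$ is $\|\bQ_{\epsilon}\|_2$-Lipschitz, the descent lemma gives
\begin{equation*}
f(\bv^{(k+1)})\leqs f(\bv^{(k)})+\langle\nabla f(\bv^{(k)}),\bv^{(k+1)}-\bv^{(k)}\rangle+\tfrac{\|\bQ_{\epsilon}\|_2}{2}\|\bv^{(k+1)}-\bv^{(k)}\|_2^2.
\end{equation*}
Adding the two inequalities and using $\iota_{\Omega}(\bv^{(k)})=\iota_{\Omega}(\bv^{(k+1)})=0$ produces the claimed inequality with the constant $a=\tfrac{1}{2}(\tfrac{1}{\alpha}-\|\bQ_{\epsilon}\|_2)$, which is positive precisely because $\alpha<\tfrac{1}{\|\bQ_{\epsilon}\|_2}$.

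Items $(iii)$ and $(iv)$ then fall out. For $(iii)$, by item $(ii)$ the sequence $\{F(\bv^{(k)})\}$ is nonincreasing. To conclude that it converges, I would observe that $\bQ_{\epsilon}=\bQ^\top\bQ+\epsilon\bI\succ 0$, so $f$ is a strongly convex quadratic, hence coercive and bounded below on $\bbRN$; together with $\iota_{\Omega}\geqs 0$ this bounds $F$ from below. For $(iv)$, telescoping the inequality in $(ii)$ from $k=0$ to $K$ gives $\sum_{k=0}^{K}a\|\bv^{(k+1)}-\bv^{(k)}\|_2^2\leqs F(\bv^{(0)})-F(\bv^{(K+1)})$, and letting $K\to\infty$ shows that $\sum_k\|\bv^{(k+1)}-\bv^{(k)}\|_2^2$ converges, forcing $\|\bv^{(k+1)}-\bv^{(k)}\|_2\to 0$.

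The only step that needs any care is $(ii)$, specifically pairing the proximal inequality with the descent lemma and checking that the sign and size of the constant $a$ come out right; the rest is bookkeeping. The restriction $\alpha<\tfrac{1}{\|\bQ_{\epsilon}\|_2}$ (strict, rather than the $\alpha\leqs\tfrac{1}{\|\bQ_{\epsilon}\|_2}$ allowed in Theorem \ref{thm:proxchar}) is exactly what is needed to make $a>0$ and thus obtain sufficient decrease.
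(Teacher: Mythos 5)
Your proof is correct and follows essentially the same route as the paper's: item $(i)$ from the explicit form of $\prox_{\iota_{\Omega}}$, item $(ii)$ by comparing the proximal subproblem's value at $\bv^{(k+1)}$ against the feasible point $\bv^{(k)}$ and combining with the descent lemma, item $(iii)$ from monotonicity plus the lower bound on the strongly convex quadratic $f$, and item $(iv)$ from summing/limiting the sufficient-decrease inequality. The only cosmetic difference is that the paper exhibits the explicit lower bound $f(\bv)\geqs-\frac{1}{2}\bp^\top\bQ_{\epsilon}^{-1}\bp$ and deduces $(iv)$ by passing to the limit in $(ii)$ rather than telescoping, which is equivalent.
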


Though we have established the convergence of $\{F(\bv^{(k)})\}_{k\in\bbN}$ and the vanishing gap between consecutive iterates, further efforts are necessary to rigorously confirm the convergence of the iterative sequence $\{\bv^{(k)}\}_{k\in\bbN}$. We demonstrate the convergence of $\{\bv^{(k)}\}_{k\in\bbN}$ to a local minimizer of function $F$ and the corresponding convergence rates in the following theorem. In order to maintain consistency with the original SR maximization model (as outlined in Theorem \ref{thm:spequifracsubtr}), we further define sequence $\{\bw^{(k)}\}_{k\in\bbN}$ based on $\{\bv^{(k)}\}_{k\in\bbN}$ and conduct an analysis of the convergence rate of $\{\bw^{(k)}\}_{k\in\bbN}$. To prove the theorem, we need to introduce the notions of subdifferential, semi-algebraic function and Kurdyka-{\L}ojasiewicz property, along with several technical lemmas. Detailed proofs and relevant content can be found in Appendix \ref{proof:convergtolocmin}.

\begin{theorem}\label{thm:convergtolocmin}
Suppose that there exists some $\tilde{\bw}\in\Omega$ such that $\bp^\top\tilde{\bw}>0$. For arbitrary initial vector $\bv^{(0)}\in\bbRN$, let $\{\bv^{(k)}\}_{k\in\bbN}$ be generated by PGA, and let $\{\bw^{(k)}\}_{k\in\bbN}$ be defined by
$$
\bw^{(k)}:=\begin{cases}
\frac{\bv^{(k)}}{(\bv^{(k)})^\top{\bm1}_N},& \mbox{if}\ (\bv^{(k)})^\top{\bm1}_N\neq0;\\
{\bm0}_N,& \mbox{otherwise}.
\end{cases}
$$
If $\alpha\in\left(0,\frac{1}{\|\bQ_{\epsilon}\|_2}\right)$, then the following statements hold:
\begin{itemize}[itemsep=0pt, parsep=0pt]
\item[$(i)$] $\{\bv^{(k)}\}_{k\in\bbN}$ converge to a locally optimal solution $\bv^*$ of model \eqref{spSRsubtractmod} with convergence rates $\|\bv^{(k)}-\bv^*\|_2=O(1/\sqrt{k})$ and $|f(\bv^{(k)})-f(\bv^*)|=O(1/k)$.
\item[$(ii)$] The limit point $\bv^*$ of $\{\bv^{(k)}\}_{k\in\bbN}$ satisfies that $\bv^*\geqs{\bm0}_N$ and $\bv^*\neq{\bm0}_N$.
\item[$(iii)$] $\{\bw^{(k)}\}_{k\in\bbN}$ converge to $\bw^*:=\frac{\bv^*}{(\bv^*)^\top{\bm1}_N}$ with convergence rates $\|\bw^{(k)}-\bw^*\|_2=O(1/\sqrt{k})$ and $|S(\bw^{(k)})-S(\bw^*)|=O(1/\sqrt{k})$, where $S$ is defined in \eqref{model:Sharpeiota}.
\end{itemize}
\end{theorem}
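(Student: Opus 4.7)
The plan is to apply the Kurdyka-{\L}ojasiewicz (KL) convergence framework of Attouch-Bolte-Svaiter to $F:=f+\iota_{\Omega}$ and then transfer the conclusions from $\{\bv^{(k)}\}$ to $\{\bw^{(k)}\}$ by a Lipschitz argument near $\bw^*$. Two preliminary facts I would record at the outset. First, $F$ is semi-algebraic: $f$ is a quadratic polynomial, and $\Omega$ is the finite union of closed faces of the nonnegative orthant indexed by supports of size at most $m$, so $\iota_{\Omega}$ is semi-algebraic; consequently $F$ satisfies the KL inequality at every point. Second, $\{\bv^{(k)}\}$ is bounded, since $\bQ_{\epsilon}\succ 0$ renders $f$ coercive on $\bbRN$ while Proposition \ref{prop:Fdecr}(i)-(ii) gives $\bv^{(k)}\in\Omega$ for $k\geqs 1$ together with the monotone decrease of $F(\bv^{(k)})$.

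For part (i), I would complement Proposition \ref{prop:Fdecr}(ii)-(iv) with a standard relative-error subgradient bound $\dist({\bm0}_N,\partial F(\bv^{(k+1)}))\leqs c\|\bv^{(k+1)}-\bv^{(k)}\|_2$. This is derived by writing the PGA step as $\frac{1}{\alpha}(\bv^{(k)}-\bv^{(k+1)})-\nabla f(\bv^{(k)})\in\partial\iota_{\Omega}(\bv^{(k+1)})$, adding $\nabla f(\bv^{(k+1)})$ to both sides, and invoking the Lipschitz continuity of $\nabla f$ with constant $\|\bQ_{\epsilon}\|_2$. Sufficient decrease, the relative-error bound, and the KL inequality at any limit point together yield, via the Attouch-Bolte-Svaiter argument, that $\{\bv^{(k)}\}$ is Cauchy, hence converges to some $\bv^*\in\Omega$ satisfying the fixed-point identity $\bv^*\in\prox_{\iota_{\Omega}}(\bv^*-\alpha\nabla f(\bv^*))$. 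Theorem \ref{thm:proxchar}(ii) then identifies $\bv^*$ as a locally optimal solution of \eqref{spSRsubtractmod}. The stated convergence rates follow from KL analysis with an appropriate desingularising exponent: the iterate bound $O(1/\sqrt{k})$ is standard, and the function-value rate $O(1/k)$ is then extracted from the quadratic descent $a\|\bv^{(k+1)}-\bv^{(k)}\|_2^2\leqs F(\bv^{(k)})-F(\bv^{(k+1)})$ combined with a tail summation for $F(\bv^{(k)})-F(\bv^*)$.

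For part (ii), $\bv^*\geqs{\bm0}_N$ is immediate because $\bv^{(k)}\in\Omega$ for $k\geqs 1$ and $\Omega$ is closed. To exclude $\bv^*={\bm0}_N$, I would use the assumption $\bp^\top\tilde{\bw}>0$ for some $\tilde{\bw}\in\Omega$, which forces at least one coordinate $p_i>0$. If $\bv^*={\bm0}_N$, the fixed-point identity becomes ${\bm0}_N\in\prox_{\iota_{\Omega}}(\alpha\bp)$, since $\nabla f({\bm0}_N)=-\bp$; but by Proposition \ref{prop:proxiotaOmega} every element of $\prox_{\iota_{\Omega}}(\alpha\bp)$ must carry at least the largest positive coordinate of $\alpha\bp$, yielding a contradiction.

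Part (iii) is then a continuity/Lipschitz transfer. By part (ii), $c_*:=(\bv^*)^\top{\bm1}_N>0$, so for $k$ large $(\bv^{(k)})^\top{\bm1}_N\geqs c_*/2$ and $\bw^{(k)}=\bv^{(k)}/(\bv^{(k)})^\top{\bm1}_N$; the map $\bv\mapsto\bv/(\bv^\top{\bm1}_N)$ is Lipschitz on a neighborhood of $\bv^*$, giving $\|\bw^{(k)}-\bw^*\|_2=O(\|\bv^{(k)}-\bv^*\|_2)=O(1/\sqrt{k})$. Since $\bQ_{\epsilon}\succ 0$ and $\bw^*\neq{\bm0}_N$, the denominator $\sqrt{\bw^\top\bQ_{\epsilon}\bw}$ is bounded away from zero on a neighborhood of $\bw^*$, so $S$ is locally Lipschitz at $\bw^*$ and $|S(\bw^{(k)})-S(\bw^*)|=O(1/\sqrt{k})$. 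The hard part will be pinning down the KL desingularising exponent for the hybrid quadratic-plus-$\ell_0$-indicator structure of $F$, since the combinatorial switching between supports interacts with the quadratic $f$; once that is settled, the remainder is a standard KL convergence recipe and routine Lipschitz arithmetic.
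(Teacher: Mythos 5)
Your overall architecture for sequence convergence matches the paper's: semi-algebraicity of $F=f+\iota_{\Omega}$ gives the KL property, sufficient decrease plus the relative-error subgradient bound plus a convergent subsequence feed into the Attouch--Bolte--Svaiter theorem, and Theorem \ref{thm:proxchar}$(ii)$ upgrades the resulting critical point to a locally optimal solution. Your part $(ii)$ argument (contradicting the fixed-point identity ${\bm0}_N\in\prox_{\iota_{\Omega}}(\alpha\bp)$ via a positive coordinate of $\bp$) is a valid alternative to the paper's, which instead contradicts local optimality by showing $f(\alpha\tilde{\bw})<0$ for small $\alpha>0$. Part $(iii)$ is essentially the paper's Lipschitz transfer.

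The genuine gap is in the convergence rates of part $(i)$, which you leave unresolved by deferring to ``KL analysis with an appropriate desingularising exponent'' and explicitly flagging the exponent as the hard open step. Two problems. First, the KL-exponent route does not naturally produce the advertised rates: for a strongly convex quadratic restricted to a finite union of polyhedral faces the local KL exponent is $1/2$, and the standard KL rate machinery would then yield \emph{linear} convergence of $\{\bv^{(k)}\}$, not $O(1/\sqrt{k})$; conversely, for exponents in $(1/2,1)$ one gets rates of a different form. Second, your claim that the $O(1/k)$ function-value rate can be ``extracted from the quadratic descent combined with a tail summation'' does not stand alone: summing $a\|\bv^{(k+1)}-\bv^{(k)}\|_2^2\leqs F(\bv^{(k)})-F(\bv^{(k+1)})$ only shows $\sum_k\|\bv^{(k+1)}-\bv^{(k)}\|_2^2<\infty$ and gives no rate on $F(\bv^{(k)})-F(\bv^*)$ without further input. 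The paper avoids the exponent question entirely: once $\bv^{(k)}\in B(\bv^*;\delta)\cap\Omega$, the local variational inequality $\langle\nabla f(\bv^*),\bu-\bv^*\rangle\geqs 0$ (inherited from the proof of Theorem \ref{thm:proxchar}$(ii)$), together with the analogous inequality for the prox subproblem objective $\Phi_k$, lets it run the classical convex proximal-gradient telescoping argument
$f(\bv^{(k+1)})-f(\bv^*)\leqs\frac{1}{2\alpha}\bigl(\|\bv^{(k)}-\bv^*\|_2^2-\|\bv^{(k+1)}-\bv^*\|_2^2\bigr)$,
yielding $|f(\bv^{(k)})-f(\bv^*)|=O(1/k)$, and then the $\epsilon$-strong convexity of $f$ combined with the same variational inequality gives $f(\bv^{(k)})-f(\bv^*)\geqs\frac{\epsilon}{2}\|\bv^{(k)}-\bv^*\|_2^2$ and hence the $O(1/\sqrt{k})$ iterate rate. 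You would need to supply this (or an equivalent) local argument to close the proof.
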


In the remainder of this section, we always let $\bv^*\in\Omega$ be the locally optimal solution of model \eqref{targaltermod} that sequence $\{\bv^{(k)}\}_{k\in\bbN}$ converges to. We recall that $m_{\bv^*}$ and $J_{\text{pos}}^{\bv^*}$ denote the number of positive components and the index set of positive components of $\bv^*$, respectively. Suppose that there exists some $\tilde{\bw}\in\Omega$ such that $\bp^\top\tilde{\bw}>0$. Then item $(ii)$ in Theorem \ref{thm:convergtolocmin} together with $\bv^*\in\Omega$ yields that $1\leqs m_{\bv^*}\leqs m$. In fact, $\bv^*$ is also the globally optimal solution of the convex model
\begin{equation}\label{spSRsubtractmod3}
\min_{\bv\in\hat{\Omega}}\left\{\frac{1}{2}\bv^\top\bQ_{\epsilon}\bv{-}\bp^\top\bv\right\},\ \text{where} \ \hat{\Omega}{:=}\{\bv\in\bbRN|\hspace{2pt}\bv{\geqs}{\bm0}_N\ \mbox{and}\ v_j{=}0\ \mbox{for all}\ j\in\bbN_N\backslash J_{\text{pos}}^{\bv^*}\}.
\end{equation}
Certainly, $\hat{\Omega}\neq\varnothing$ due to the condition $m_{\bv^*}\geqs1$. According to the definition of $\hat{\Omega}$, it is straightforward to observe that $\bv^*\in\hat{\Omega}$. Furthermore, $\hat{\Omega}$ is a closed convex set and $\hat{\Omega}\subset\Omega$. To analyze the relation between $\bv^*$ and the original $m$-sparse Sharpe ratio maximization model \eqref{model:Sharpeiota}, we define
\begin{equation}
\hat{\Omega}_1:=\{\bv\in\Delta|\hspace{2pt}v_j=0\ \mbox{for all}\ j\in\bbN_N\backslash J_{\text{pos}}^{\bv^*}\},
\end{equation}
where $\Delta$ is given by \eqref{eqn:sparsemodell0}. It is easy to see that $\hat{\Omega}_1\subset\Omega_1$, where $\Omega_1$ defined in \eqref{def:Omega1} is the constraint set of model \eqref{model:Sharpeiota}. We then have the following theorem, whose proof is provided in Appendix \ref{proof:wlocalopt}.

\begin{theorem}\label{thm:wlocalopt}
Suppose that there exists some $\tilde{\bw}\in\hat{\Omega}$ such that $\bp^\top\tilde{\bw}>0$, where $\hat{\Omega}$ is defined in \eqref{spSRsubtractmod3}, and let $\bw^*:=\frac{\bv^*}{(\bv^*)^\top{\bm1}_N}$. Then the following statements hold:
\begin{itemize}[itemsep=0pt, parsep=0pt]
\item[$(i)$] $\bv^*$ is the unique globally optimal solution of model \eqref{spSRsubtractmod3}.
\item[$(ii)$] $\bw^*$ is a globally optimal solution of model $\max\limits_{\bw\in\hat{\Omega}_1}\ S(\bw)$.
\item[$(iii)$] If $m_{\bv^*}=m$, then $\bw^*$ is a locally optimal solution of model \eqref{model:Sharpeiota}.
\end{itemize}
\end{theorem}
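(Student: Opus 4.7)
The plan is to handle the three assertions in order, reducing each to previously established structural facts.

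For part (i), the approach is to exploit strict convexity together with the local optimality already guaranteed by Theorem \ref{thm:convergtolocmin}(i). Since $\epsilon>0$ in \eqref{def:pQQeps}, the matrix $\bQ_{\epsilon}$ is positive definite, so $f(\bv)=\frac{1}{2}\bv^\top\bQ_{\epsilon}\bv-\bp^\top\bv$ is strictly convex. The set $\hat{\Omega}$ is the intersection of the nonnegative orthant with a coordinate subspace, hence closed and convex, and any $\bv\in\hat{\Omega}$ is supported inside $J_{\text{pos}}^{\bv^*}$ and so satisfies $\|\bv\|_0\leqs m_{\bv^*}\leqs m$, giving $\hat{\Omega}\subset\Omega$. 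Because $\bv^*\in\hat{\Omega}$ is locally optimal over the larger set $\Omega$, it is locally optimal over $\hat{\Omega}$ as well; a local minimizer of a strictly convex function over a convex set is its unique global minimizer, which proves (i).

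For part (ii), the plan is to re-run the proof of Theorem \ref{thm:spequifracsubtr} with the pair $(\Omega,\Omega_1)$ replaced by $(\hat{\Omega},\hat{\Omega}_1)$. The two scaling maps used in that theorem preserve support: for $\bv\in\hat{\Omega}\setminus\{\bm{0}_N\}$, the normalization $\bv/(\bv^\top\bm{1}_N)$ has the same sign pattern as $\bv$ and hence lies in $\hat{\Omega}_1$; conversely, for $\bw\in\hat{\Omega}_1$ with $\bp^\top\bw>0$, the rescaled vector $(\bp^\top\bw/(\bw^\top\bQ_{\epsilon}\bw))\bw$ lies in $\hat{\Omega}$. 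The assumed $\tilde{\bw}\in\hat{\Omega}$ with $\bp^\top\tilde{\bw}>0$ normalizes to an element of $\hat{\Omega}_1$ with strictly positive inner product with $\bp$, so the hypothesis of the analog of Theorem \ref{thm:spequifracsubtr} is met. Combining this equivalence with part (i) identifies $\bw^*=\bv^*/(\bv^*)^\top\bm{1}_N$ as a maximizer of $S$ over $\hat{\Omega}_1$.

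For part (iii), assume $m_{\bv^*}=m$. The plan is to exhibit a radius $\delta>0$ such that $\Omega_1\cap B(\bw^*;\delta)\subset\hat{\Omega}_1$. Since Theorem \ref{thm:convergtolocmin}(ii) guarantees $(\bv^*)^\top\bm{1}_N>0$, the vector $\bw^*$ has the same positive support as $\bv^*$, so $\delta:=\frac{1}{2}\min_{j\in J_{\text{pos}}^{\bv^*}}w^*_j>0$. For any $\bw\in\Omega_1\cap B(\bw^*;\delta)$ and $j\in J_{\text{pos}}^{\bv^*}$, the inequality $|w_j-w^*_j|<\delta\leqs w^*_j/2$ forces $w_j>0$; hence $\bw$ has at least $|J_{\text{pos}}^{\bv^*}|=m$ positive components, which combined with $\|\bw\|_0\leqs m$ pins the support of $\bw$ to exactly $J_{\text{pos}}^{\bv^*}$. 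Thus $\bw\in\hat{\Omega}_1$, and part (ii) yields $S(\bw^*)\geqs S(\bw)$ on this neighborhood, i.e., $\bw^*$ is locally optimal in model \eqref{model:Sharpeiota}.

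\textbf{Main obstacle.} The most delicate step is the transfer in part (ii): even though $(\hat{\Omega},\hat{\Omega}_1)$ looks structurally analogous to $(\Omega,\Omega_1)$, one must carefully verify that every scaling used in Theorem \ref{thm:spequifracsubtr} respects the extra support constraint $v_j=0$ for $j\notin J_{\text{pos}}^{\bv^*}$ that defines $\hat{\Omega}$. Part (iii) also hinges critically on the equality $m_{\bv^*}=m$: if $m_{\bv^*}<m$, then perturbations around $\bw^*$ could legally activate new coordinates outside $J_{\text{pos}}^{\bv^*}$ without violating $\|\bw\|_0\leqs m$, and local optimality over $\Omega_1$ would no longer follow from global optimality over $\hat{\Omega}_1$.
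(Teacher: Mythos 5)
Your proposal is correct and follows essentially the same route as the paper's proof: part (i) via strict convexity of $f$ plus the fact that a local minimizer over a convex set is global, part (ii) by rerunning the scaling equivalence of Theorem \ref{thm:spequifracsubtr} on the pair $(\hat{\Omega},\hat{\Omega}_1)$ after checking the scalings preserve the support constraint, and part (iii) by choosing a radius so that $\Omega_1\cap B(\bw^*;\delta)\subset\hat{\Omega}_1$ when $m_{\bv^*}=m$. The only differences are cosmetic (the paper uses $\delta=\frac{1}{3}w^*_{\text{min-pos}}$ instead of $\frac{1}{2}$, and inlines the Theorem \ref{thm:spequifracsubtr} computation rather than citing an analog).
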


Item $(iii)$ in Theorem \ref{thm:wlocalopt} demonstrates that the limit point of the sequence obtained by PGA can yield a locally optimal solution of model \eqref{model:Sharpeiota}. In fact, according to item $(i)$ in Theorem \ref{thm:proxchar}, we have the following theorem that provides sufficient conditions for obtaining a globally optimal solution of model \eqref{model:Sharpeiota}, whose proof is provided in Appendix \ref{proof:convergtoglobmin}.

\begin{theorem}\label{thm:globalopt}
Suppose that there exists some $\tilde{\bw}\in\Omega_1$ such that $\bp^\top\tilde{\bw}>0$, and let $\bw^*:=\frac{\bv^*}{(\bv^*)^\top{\bm1}_N}$. If one of the following two conditions holds:\vspace{-0.5em}
\begin{itemize}[itemsep=0pt, parsep=0pt]
\item[$(i)$] $m_{\bv^*}<m$;
\item[$(ii)$] $m_{\bv^*}=m$\ \ and\ \ $\nabla_i f(\bv^*)>-\epsilon\cdot\min\{v_i^*|i\in\supp(\bv^*)\}$ for all $i\in\bbN_N\backslash\supp(\bv^*)$,
\end{itemize}\vspace{-0.3em}
then $\bw^*$ is a globally optimal solution of model \eqref{model:Sharpeiota}.
\end{theorem}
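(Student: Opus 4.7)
The plan is to leverage Theorem \ref{thm:proxchar}(i), which asserts that $\bv^*$ is a globally optimal solution of model \eqref{targaltermod} (equivalently \eqref{spSRsubtractmod}) whenever the proximity fixed-point equation $\bv^* = \prox_{\iota_{\Omega}}(\bv^*-\alpha\nabla f(\bv^*))$ holds for some $\alpha\geqs 1/\epsilon$. Once that is secured, Theorem \ref{thm:spequifracsubtr} will transfer global optimality to $\bw^*=\bv^*/(\bv^*)^\top{\bm1}_N$ in the mSSRM model \eqref{model:Sharpeiota}; note $\bw^*$ is well-defined because Theorem \ref{thm:convergtolocmin}(ii) guarantees $\bv^*\geqs{\bm0}_N$ and $\bv^*\neq{\bm0}_N$, so $(\bv^*)^\top{\bm1}_N>0$. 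Hence the entire task reduces to verifying the proximity equation at the specific choice $\alpha=1/\epsilon$ under either of the two stated conditions.

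First I would extract first-order information at $\bv^*$ from the local optimality ensured by Theorem \ref{thm:convergtolocmin}(i). For every $i\in\supp(\bv^*)$, the coordinate $v_i^*$ is strictly positive and can be perturbed in either direction while preserving the sign constraint $v_i\geqs 0$ and the sparsity constraint (since such a perturbation does not enlarge the support); differentiability of $f$ together with local optimality then yields $\nabla_i f(\bv^*)=0$. Under condition (i), where $m_{\bv^*}<m$, any $i\notin\supp(\bv^*)$ may additionally be increased from zero without violating the sparsity constraint, so the standard one-sided first-order argument gives $\nabla_i f(\bv^*)\geqs 0$ for each such $i$. Under condition (ii) this second piece of information is replaced by the explicit hypothesis $\nabla_i f(\bv^*)>-\epsilon\cdot\min\{v_j^*:j\in\supp(\bv^*)\}$.

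Next I would set $\by:=\bv^*-(1/\epsilon)\nabla f(\bv^*)$ and verify $\bv^*\in\prox_{\iota_{\Omega}}(\by)$ via Proposition \ref{prop:proxiotaOmega}. For $i\in\supp(\bv^*)$ the relation $\nabla_i f(\bv^*)=0$ gives $y_i=v_i^*>0$; for $i\notin\supp(\bv^*)$ it gives $y_i=-\nabla_i f(\bv^*)/\epsilon$. Under condition (i), $\nabla_i f(\bv^*)\geqs 0$ forces $y_i\leqs 0$, so the positive indices of $\by$ are exactly $\supp(\bv^*)$, of cardinality $m_{\bv^*}<m$; Proposition \ref{prop:proxiotaOmega} then selects all of them and produces $\bv^*$. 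Under condition (ii), the hypothesis rearranges to $y_i<\min\{v_j^*:j\in\supp(\bv^*)\}$, so even when $y_i>0$ the value is strictly smaller than every positive entry of $\by$ supported on $\supp(\bv^*)$; since $|\supp(\bv^*)|=m$, the $m$-largest positive entries of $\by$ are exactly those indexed by $\supp(\bv^*)$, and Proposition \ref{prop:proxiotaOmega} again returns $\bv^*$. Applying Theorem \ref{thm:proxchar}(i) with $\alpha=1/\epsilon$ and then Theorem \ref{thm:spequifracsubtr} completes the argument.

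The main delicacy lies in case (ii): unlike case (i), the active sparsity constraint $m_{\bv^*}=m$ precludes a one-sided first-order argument for $i\notin\supp(\bv^*)$, so the proof must faithfully translate the explicit hypothesis into the strict inequality $y_i<\min\{v_j^*:j\in\supp(\bv^*)\}$ and then invoke the ordering rule of Proposition \ref{prop:proxiotaOmega} to guarantee that the selected $m$-largest positive entries coincide with $\supp(\bv^*)$, not with some mixture of indices inside and outside the support. A secondary point worth checking is that the hypothesis $\bp^\top\tilde{\bw}>0$ for some $\tilde{\bw}\in\Omega_1\subset\Omega$ simultaneously activates Theorem \ref{thm:convergtolocmin} (which needs $\tilde{\bw}\in\Omega$) and Theorem \ref{thm:spequifracsubtr} (which needs $\tilde{\bw}\in\Omega_1$), as well as ensures the nonvanishing denominator in the definition of $\bw^*$.
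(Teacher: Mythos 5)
Your proposal is correct and follows essentially the same route as the paper: reduce everything to verifying the fixed-point identity $\bv^*=\prox_{\iota_{\Omega}}\bigl(\bv^*-\tfrac{1}{\epsilon}\nabla f(\bv^*)\bigr)$ via the case analysis in Proposition \ref{prop:proxiotaOmega}, then invoke Theorem \ref{thm:proxchar}$(i)$ and Theorem \ref{thm:spequifracsubtr}. The only (immaterial) difference is that you obtain $\nabla_i f(\bv^*)=0$ on $\supp(\bv^*)$ and $\nabla_i f(\bv^*)\geqs 0$ off the support in case $(i)$ by direct first-order perturbation of the local minimizer, whereas the paper reads these same facts off the already-established fixed-point equation \eqref{eq:proxiotaOmegav} at the algorithm's step size $\alpha$; both derivations are valid.
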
\vspace{-0.2em}

Combining Theorem \ref{thm:globalopt} and item $(iii)$ in Theorem \ref{thm:wlocalopt}, we see that the proposed method can obtain a globally optimal solution of model \eqref{model:Sharpeiota} when $m_{\bv^*}<m$. Even if this condition does not hold, we can obtain at least a locally optimal solution. To test validation of PGA's global optimality, we conduct a set of simulation experiments, whose details are presented in Appendix \ref{supp:validglobal}. The codes for the
simulation experiments are accessible via the link: \url{https://github.com/linyizun2024/mSSRM/tree/main/Codes_for_Simulation}.

We call the existence of $\tilde{\bw}\in\hat{\Omega}$ such that $\bp^\top\tilde{\bw}>0$ in Theorem \ref{thm:wlocalopt} the Existence of Positive Expected Return (EPER) condition. Although the EPER condition is required to guarantee the convergence of $\bw^*$ to a locally optimal solution of the original model \eqref{model:Sharpeiota}, the proposed method is still of high practical significance in the case that the EPER condition does not hold. From the proofs of item $(i)$ in Theorem \ref{thm:convergtolocmin} and item $(i)$ in Theorem \ref{thm:wlocalopt}, we see that even if the EPER condition does not hold, the sequence generated by PGA still converges to a locally optimal solution $\bv^*$ of model \eqref{spSRsubtractmod}, which is also the globally optimal solution of model \eqref{spSRsubtractmod3}. In these two models, the objective function $\frac{1}{2}\bv^\top\bQ_{\epsilon}\bv-\bp^\top\bv$ (subtraction form) w.r.t. $\bv$ represents risk minus expected return, whose minimization gives smaller risk and less loss in revenue, even if the expected return is not positive. For the case that the expected return is not positive, compared with the failure of Sharpe ratio of fractional form, the globally or locally optimal solution of the subtraction form seems to have more realistic significance. We recall from item $(ii)$ in Theorem \ref{thm:convergtolocmin} that $\bv^*$ may be equal to ${\bm0}_N$ if the EPER condition does not hold. In this case, we shall set $\bw^*={\bm0}_N$ and keep all the wealth in the risk-free asset to avoid loss in revenue. To close this section, we summarize the whole $m$-sparse Sharpe ratio maximization method, which we abbreviate to mSSRM-PGA, in Appendix \ref{supp:algo}.

\section{Experimental results}\label{sec:experiment}
Extensive experiments with real-world financial data sets are conducted to evaluate the performance of the proposed mSSRM-PGA. Moreover, we also consider one baseline method: 1/N \cite{1Nstrategy}, as well as $9$ state-of-the-art methods: IPSRM-D \cite{sharpeoptimlagran}, PLCT \cite{SRlincom}, SSMP \cite{sparsepo}, MAXER \cite{SSMPmve}, SSPO \cite{SSPO}, SPOLC \cite{SPOLC}, S1, S2 and S3 \cite{SSPOl0}, as competitors in the experiments. We use 6 real-world monthly benchmark data sets: FF25, FF25EU, FF32, FF49, FF100 and FF100MEINV to compare different methods. These data sets are collected from the baseline and commonly-used Kenneth R. French's Real-world Data Library\footnote{\url{http://mba.tuck.dartmouth.edu/pages/faculty/ken.french/data_library.html}}. Details regarding these competitors and data sets are given in Appendix \ref{supp:experiment}. As for mSSRM-PGA, we examine three levels of sparsity $m=10$, $m=15$, $m=20$ and set $\epsilon=10^{-3}$. The setting of other parameters are presented in Appendix \ref{supp:algo}. The codes of mSSRM-PGA are accessible via the link: \url{https://github.com/linyizun2024/mSSRM/tree/main/Codes_for_Experiments_in_Paper}.

\subsection{Results for Sharpe ratios}\label{sec:realexperimentsharpe}

We adopt the moving-window trading framework in \cite{egrmvgap} to imitate real-world portfolio management. For each method, we use the asset returns $\{\br_{(t)}\}_{t=1}^T$ or the price relatives $\{\mathbf{x}_{(t)}:=\br_{(t)}+\mathbf{1}_N\}_{t=1}^T$ in the time window $t=[1:T]$ to update the portfolio $\hat{\bw}_{(T+1)}$ for the next trading time. On the $(T+1)$-th time, we compute the portfolio return by $\hat{r}_{(T+1),\hat{\bw}}=\mathbf{x}_{(T+1)}^\top \hat{\bw}_{(T+1)}-1$ and then turn to the next round where the time window moves to $t=[2:(T+1)]$ and a new portfolio $\hat{\bw}_{(T+2)}$ is computed. This procedure is repeated till the last trading time $\mathscr{T}$, which yields  a return sequence $\{\hat{r}_{(t),\hat{\bw}}\}_{t=1}^{\mathscr{T}}$. This sequence can be used to compute the test SR:
\begin{equation*}\label{eqn:testSR}
\widehat{SR}=\frac{(\sum_{t=T+1}^{\mathscr{T}}\hat{r}_{(t),\hat{\bw}})/(\mathscr{T}-T)}{\sqrt{(\sum_{s=T+1}^{\mathscr{T}}(\hat{r}_{(s),\hat{\bw}}-(\sum_{t=T+1}^{\mathscr{T}}\hat{r}_{(t),\hat{\bw}})/(\mathscr{T}-T))^2)/(\mathscr{T}-T-1)}}.
\end{equation*}
The 1/N strategy does not involve the time window size $T$. For all other methods, we examine two conventional settings for the time window size in the finance industry \cite{SSMPmve,SSMPsparhed}: $T=60$ and $T=120$.

Table \ref{tab:realresult} shows the (monthly) SRs of the $11$ compared methods. Because MAXER requires $T>(N+2)$, it is unavailable on FF100 and FF100MEINV when $T=60$. It is worth noting that the trivial strategy 1/N outperforms most competitors in most situations. The reason is that 1/N diversifies the risk over all the assets, which is also an effective risk control approach \cite{1Nstrategy}. However, mSSRM-PGA outperforms all the competitors including 1/N  on all the $6$ data sets when $T=60$ and on $5$ data sets when $T=120$. For example, its SR is more than $70\%$ higher than that of 1/N on FF25EU whether $T=60$ or $T=120$. Hence mSSRM-PGA achieves competitive SRs with sparse portfolios, which saves much managerial cost.

\vspace{-0.5em}
\begin{table}[htbp]
	\setlength{\tabcolsep}{0.7mm}
	\footnotesize
	\centering
	\caption{Sharpe ratios of different portfolio optimization methods on 6 benchmark data sets.}
	\label{tab:realresult}
\scalebox{0.8}{
	\begin{tabular}{ccccccc|cccccc}
		\hline
		Strategy&  FF25  &  FF25EU  & FF32  & FF49&	FF100&	FF100MEINV&  FF25  &  FF25EU  & FF32  & FF49&	FF100&	FF100MEINV\\
		\hline
        & \multicolumn{6}{c|}{$T=60$} &  \multicolumn{6}{c}{$T=120$}\\
        \hline
	1/N& 0.2276 & 0.1574 & 0.2234 & 0.2057 & 0.2087 & 0.2151& 0.2276 & 0.1574 & 0.2234 & {\bf 0.2057} & 0.2087 & 0.2151\\
	SPOLC& 0.1452 & 0.0315 & 0.1734 & 0.0752 & 0.0562 & 0.1009& 0.1545 & 0.0350 & 0.1830 & 0.1291 & 0.0988 & 0.1218\\
	SSPO& 0.1544 & 0.0411 & 0.1181 & 0.0588 & 0.0425 & 0.0872& 0.1789 & 0.0719 & 0.1557 & 0.0601 & 0.0529 & 0.1109\\
	S1& 0.1497 & 0.0369 & 0.1169 & 0.0559 & 0.0327 & 0.0879& 0.1789 & 0.0736 & 0.1525 & 0.0648 & 0.0467 & 0.0999\\
	S2& 0.1382 & 0.0633 & 0.1225 & 0.0573 & 0.0456 & 0.1034& 0.1578 & 0.0725 & 0.1438 & 0.0605 & 0.0602 & 0.1203\\
	S3& 0.1428 & 0.0607 & 0.1238 & 0.0570 & 0.0469 & 0.1100& 0.1609 & 0.0709 & 0.1463 & 0.0617 & 0.0603 & 0.1215\\
	SSMP& 0.1934 & 0.1596 & 0.1535 & 0.1658 & 0.0883 & 0.1448& 0.1920 & 0.0849 & 0.1512 & 0.1581 & 0.0573 & 0.1495\\
	MAXER&0.1825 & 0.2229 & 0.1625 & 0.1581&N/A&N/A&0.1921 & 0.2379 & 0.1465 & 0.1433 & 0.1351 & 0.1479\\
	IPSRM-D& 0.2239 & 0.1994 & 0.1952 & 0.1436 & 0.1766 & 0.1662& 0.2439 & 0.2358 & 0.2240 & 0.1410 & 0.2012 & 0.1712\\
	PLCT& 0.2475 & 0.2708 & 0.2600 & 0.2119 & 0.2270 & 0.2220& 0.2468 & 0.2796 & 0.2577 & 0.2025 & 0.2369 & 0.2279\\
	\hline
	{\bf mSSRM-PGA(m=10)}& {\bf0.2481 }& {\bf0.2712} & {\bf0.2612 }& {\bf0.2151 }& {\bf0.2290 }& {0.2217 }& {\bf0.2472 }& {\bf0.2796 }& {\bf0.2592} & {0.2041 }& {\bf0.2391 }& {0.2271 }\\
	{\bf mSSRM-PGA(m=15)}& {\bf0.2481 }& {\bf0.2708} & {\bf0.2615 }& {\bf0.2135 }& {\bf0.2289 }& {\bf0.2232 }& {\bf0.2474 }& {\bf0.2796 }& {\bf0.2592} & {0.2040 }& {\bf0.2381 }& {\bf0.2293 }\\
	{\bf mSSRM-PGA(m=20)}& {\bf0.2481 }& {\bf0.2708} & {\bf0.2615 }& {\bf0.2134 }& {\bf0.2285 }& {\bf0.2234 }& {\bf0.2474 }& {\bf0.2796 }& {\bf0.2592} & {0.2041 }& {\bf0.2384 }& {\bf0.2292 }\\
	\hline
	\end{tabular}}
\end{table}

\subsection{Results for Cumulative Wealths}\label{sec:realexperimentcw}

Ordinary investors are also concerned about how much they gain when using an investing strategy. Without loss of generality, we can set the initial wealth for an investing strategy as $S_{(0)}=1$, then the final cumulative wealth can be conveniently computed by $S_{(\mathscr{T})}=\prod_{t=1}^{\mathscr{T}}(\hat{r}_{(t),\hat{\bw}}+1)$.
The results of final cumulative wealths are shown in Table \ref{tab:realresultcw}. The two competitors 1/N and PLCT perform well in general. Nevertheless, mSSRM-PGA achieves the best final cumulative wealths on 4 out of the 6 data sets. Besides, it outperforms each competitor on at least 5 out of the 6 data sets. For example, mSSRM-PGA is about $20\%$ higher than the second best competitor PLCT on FF49 when $T=60$ and $m=10$. On the data sets where mSSRM-PGA is not the best method, it is still the second best method. These results indicate that mSSRM-PGA is an effective strategy for pursuing return gain in a practical perspective.

\vspace{-0.5em}
\begin{table}[htbp]
	\setlength{\tabcolsep}{0.7mm}
	\footnotesize
	\centering
	\caption{Cumulative wealths of different portfolio optimization methods on 6 benchmark data sets.}
	\label{tab:realresultcw}
\scalebox{0.8}{
	\begin{tabular}{ccccccc|cccccc}
		\hline
		Strategy&  FF25  &  FF25EU  & FF32  & FF49&	FF100&	FF100MEINV&  FF25  &  FF25EU  & FF32  & FF49&	FF100&	FF100MEINV\\
		\hline
		&\multicolumn{6}{c|}{$T=60$}  &  \multicolumn{6}{c}{$T=120$}\\
		\hline
	1/N& 355.98 & 13.05 & 424.42 & 235.48 & 364.87 & {\bf 428.70}& 355.98 & 13.05 & 424.42 & {\bf 235.48} & 364.87 & 428.70\\
	SPOLC& 57.53 & 0.96 & 169.58 & 5.44 & 2.39 & 14.05& 70.46 & 1.03 & 259.74 & 100.49 & 16.03 & 36.20\\
	SSPO& 129.35 & 1.22 & 30.20 & 1.33 & 0.89 & 8.98& 286.51 & 2.67 & 130.21 & 1.61 & 1.74 & 25.62\\
	S1& 100.76 & 1.08 & 29.47 & 1.09 & 0.54 & 9.25& 265.82 & 2.78 & 121.47 & 2.23 & 1.27 & 15.57\\
	S2& 66.24 & 2.17 & 39.27 & 1.39 & 1.15 & 20.45 & 130.31 & 2.73 & 93.13 & 1.89 & 2.67 & 43.66\\
	S3& 70.88 & 2.01 & 36.88 & 1.28 & 1.20 & 23.73 & 129.90 & 2.61 & 89.51 & 1.92 & 2.62 & 38.70\\
	SSMP& 248.67 & 13.47 & 158.98 & 186.79 & 10.09 & 154.27 & 237.45 & 3.25 & 149.65 & 143.18 & 2.26 & 222.35\\
	MAXER&173.39 & 47.56 & 200.03 & 142.31&N/A&N/A&216.94 & 55.71 & 117.42 & 98.85 & 79.82 & 188.54\\
	IPSRM-D& 398.55 & 37.25 & 243.83 & 69.57 & 240.12 & 146.40 & 567.76 & 77.79 & 507.47 & 50.04 & 457.86 & 188.34\\
	PLCT& 581.41 & {\bf126.04} & 918.62 & 238.27 & 471.44 & 354.70 & 608.65 & {\bf 148.19} & 854.83 & 157.50 & 552.41 & 399.48\\
	\hline
	{\bf mSSRM-PGA (m=10)}& {\bf 615.34} & { 126.02} & {\bf 991.89} & {\bf 285.02} & {\bf 527.09} & { 375.75} & {\bf 640.89} & { 147.17} & {\bf 928.19} & {188.38} & {\bf 635.65} & {421.97}\\
	{\bf mSSRM-PGA (m=15)}& {\bf 614.71} & { 125.19} & {\bf 996.32} & {\bf 262.54} & {\bf 522.28} & { 383.44} & {\bf 643.44} & { 147.17} & {\bf 927.21} & {172.95} & {\bf 597.67} & {\bf 435.01}\\
	{\bf mSSRM-PGA (m=20)}& {\bf 614.70} & { 125.19} & {\bf 996.23} & {\bf 262.06} & {\bf 515.50} & { 384.65} & {\bf 643.44} & { 147.17} & {\bf 927.16} & {173.27} & {\bf 603.05} & {\bf 433.15}\\
		\hline
	\end{tabular}}
\end{table}

\subsection{Results for Transaction Costs}
\label{sec:tc}

Cumulative wealth with transaction cost can also be tested to see how the transaction cost influences the performance of different methods. We adopt the proportional transaction cost model \cite{UPtc,OLMAR,AICTR}
\vspace{-0.15em}
\begin{equation*}
S^{\nu} {=} S_{(0)}\prod_{t=1}^{\mathscr{T}}\left[(\hat{\bw}_{(t)}^\top\mathbf{x}_{(t)})\cdot
\left(1-\frac{\nu}{2}\sum_{i=1}^{N}|\hat{w}_{(t),i}-\tilde{w}_{(t-1),i}|\right)\right], \quad \tilde{w}_{(t-1),i} {=} \frac{\hat{w}_{(t-1),i}\mathrm{x}_{(t-1),i}}{ \hat{\bw}_{(t-1)}^\top\mathbf{x}_{(t-1)}},\vspace{-0.15em}
\end{equation*}
where $\tilde{w}_{(t-1),i}$ is the evolved portfolio weight of the $i$-th asset at the end of the $(t-1)$-th period, and $\nu$ is the bidirectional transaction cost rate. When the cost rate of buying is the same as that of selling, updating the evolved portfolio $\tilde{\bw}_{(t-1)}$ as the next portfolio $\hat{\bw}_{(t)}$ yields a proportional transaction cost of $\frac{\nu}{2}\sum_{i=1}^{N}|\hat{w}_{(t),i}-\tilde{w}_{(t-1),i}|$. Figure \ref{fig:tc} in Appendix \ref{supp:experiment} shows the final cumulative wealths of different methods as $\nu$ varies from $0$ to $0.5\%$ with $T=60$. mSSRM-PGA outperforms all other competitors on FF25, FF25EU and FF32 for all $\nu\in [0,0.5\%]$, and on FF100 for $\nu\leqs 0.45\%$. mSSRM-PGA is the second best method on FF100MEINV, following 1/N. This is because 1/N naturally keeps a small trading volume. Note that a manager for a mutual fund with sufficient trades and capital is able to negotiate for a small enough $\nu$. Thus mSSRM-PGA is applicable to scenarios with a certain level of transaction cost.

\subsection{Sparsity for mSSRM-PGA}\label{sec:sparsitymssrm}
In this subsection, we examine the sparsity of the portfolios $\{{\hat{\bw}}_{(t)}\}$ generated by mSSRM-PGA. The sparsity can be measured by the cardinality of the support set of ${\hat{\bw}}_{(t)}$: $|\supp({\hat{\bw}}_{(t)})|$. For each data set and each setting of $m$, the mean and the standard deviation (STD) of $\{|\supp({\hat{\bw}}_{(t)})|\}$ are computed to provide a general description, shown in Table \ref{tab:sparsityresult}. It indicates that mSSRM-PGA further increases sparsity compared with the preseted sparsity level $m$. Moreover,  mSSRM-PGA keeps stable sparsity w.r.t. the change of $m$. For example, the average sparsity for mSSRM-PGA is about $4.9$ when $T=60$ (or $4.4$ when $T=120$) on FF25EU, for all the settings $m=10, 15, 20$. As the total number of assets $N$ increases, mSSRM-PGA gets more advantageous in sparsity. For example, the average sparsity for mSSRM-PGA is about $8\sim 11$ on FF100 and FF100MEINV, compared with $N=100$. It indicates that mSSRM-PGA selects only $8\%\sim 11\%$ of the assets in the whole asset pool, while the widely-used 1/N strategy has to maintain the whole asset pool. Therefore, mSSRM-PGA can save much managerial cost by reducing the proportion of selected assets, while keeping a competitive performance in SR optimization.

\vspace{-0.5em}
\begin{table}[htbp]
	\setlength{\tabcolsep}{0.7mm}
	\footnotesize
	\centering
	\caption{Sparsity of the portfolios generated by mSSRM-PGA: $|\supp({\hat{\bw}}_{(t)})|$.}
	\label{tab:sparsityresult}
\scalebox{0.87}{
	\begin{tabular}{cccccccc|cccccc}
		\hline
		m&  &  FF25  &  FF25EU  & FF32  & FF49&	FF100&	FF100MEINV&  FF25  &  FF25EU  & FF32  & FF49&	FF100&	FF100MEINV\\
		\hline
		& & \multicolumn{6}{c|}{$T=60$}	&	\multicolumn{6}{c}{$T=120$}\\
		\hline
	\multirow{2}{*}{ 10}&   Mean   & 6.3511 & 4.8342 & 6.4159 & 8.1214 & 8.0097 & 7.9175 & 7.1359 & 4.4560 & 7.0825 & 8.4790 & 8.3706 & 8.8722 \\
	&  STD   & 2.4164 & 2.1763 & 2.3654 & 1.9918 & 2.2473 & 2.3915 & 2.2221 & 1.4645 & 2.4464 & 1.9080 & 2.3343 & 2.0117 \\
		\hline
	\multirow{2}{*}{ 15}&  Mean   &  6.4746 & 4.9352 & 7.4286 & 9.0000 & 8.9709 & 9.0825 &   7.1637 & 4.4430 & 7.4919 & 9.6003 & 9.8754 & 10.5906 \\
	&   STD  &  3.0451 & 2.3573 & 3.0590 & 3.0713 & 3.3964 & 3.5906 & 2.2995 & 1.4462 & 2.2567 & 3.1731 & 3.6169 & 3.4845 \\
		\hline
	\multirow{2}{*}{ 20}&   Mean  &  6.4763 & 4.9352 & 7.4692 & 9.0421 & 9.1974 & 9.2994 & 7.1637 & 4.4430 & 7.4919 & 9.6828 & 10.0437 & 10.7621 \\
	&   STD  &  3.0462 & 2.3573 & 3.1734 & 3.1620 & 3.8949 & 4.0125 &  2.2995 & 1.4462 & 2.2567 & 3.3349 & 3.9160 & 3.7460 \\
		\hline
	\end{tabular}}
\end{table}

\vspace{-0.5em}
\section{Concluding Remarks}\label{sec:conclusion}
The Sharpe ratio (SR) is a very important measurement for the performance of returns attributable to risk in finance. On the other hand, modern portfolio management usually restricts the number of selected assets to a relatively small size, in order to save managerial and financial costs. The $m$-sparse ($\ell_0$) constraint is an exact constraint for a sparse portfolio, but it is nonconvex and complex. Thus few existing methods can optimize the SR with the $m$-sparse constraint. In this study, we convert the $m$-sparse fractional optimization problem into an equivalent $m$-sparse quadratic programming problem. Then we develop an efficient, easy-to-implement and mathematically sound proximal gradient algorithm to solve this nonconvex problem. We theoretically prove that this algorithm yields a portfolio that achieves the globally optimal $m$-sparse Sharpe ratio under certain conditions.

We conduct extensive experiments on $6$ real-world monthly benchmark data sets built on the Kenneth R. French's widely-used public data library. The numerical results demonstrate that the proposed mSSRM-PGA improves the SR, compared with 9 state-of-the-art portfolio optimization methods including SPOLC, SSPO, S1, S2, S3, SSMP, MAXER, IPSRM-D, PLCT and one baseline method 1/N. For another evaluating metric cumulative wealth, mSSRM-PGA outperforms each competitor on at least 5 out of the 6 data sets. Besides, mSSRM-PGA can withstand a considerable level of transaction cost rate. Sparsity experiments indicate that mSSRM-PGA successfully generates portfolios with stable sparsity, and its advantage increases as the size of the whole asset pool increases. In summary, the proposed mSSRM-PGA is a promising approach in managing portfolios or other financial issues, which is worth further investigations. A limitation of this research lies in its inability to directly apply to fractional optimization models featuring nondifferentiable numerator and denominator. Future work will strive to broaden the theoretical and methodological foundations, ultimately enabling its application to a broader spectrum of fractional optimization models in machine learning.

\section*{Acknowledgements}
The authors thank the anonymous reviewers for their constructive comments and valuable suggestions in improving this paper. This work was supported in part by National Natural Science Foundation of China under Grants 12401120 and 62176103, in part by Guangdong Basic and Applied Basic Research Foundation under Grants 2021A1515110541 and 2023B1515120064, in part by the Science and Technology Planning Project of Guangdong under Grant 2023A0505030013, and in part by the Science and Technology Planning Project of Guangzhou under Grants 2024A04J3940, 2024A04J9896, 202206030007, Nansha District: 2023ZD001 and Development District: 2023GH01.

\bibliographystyle{plain}
\bibliography{bibfile}

\newpage

\appendix

\setcounter{algorithm}{0}
\renewcommand{\thealgorithm}{A\arabic{algorithm}}

\section{Appendix}
\subsection{Proof of Theorem \ref{thm:spequifracsubtr}}
\label{proof:spequifracsubtr}
To prove Theorem \ref{thm:spequifracsubtr}, we need the following lemma.
\begin{lemma}\label{lem:pveqvQv}
Suppose that there exists some $\tilde{\bw}\in\Omega_1$ such that $\bp^\top\tilde{\bw}>0$. If $\hat{\bv}$ is an optimal solution of model \eqref{spSRsubtractmod}, then $\hat{\bv}\neq{\bm0}_N$ and $\bp^\top\hat{\bv}=\hat{\bv}^\top\bQ_{\epsilon}\hat{\bv}>0$.
\end{lemma}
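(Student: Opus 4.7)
\textbf{Proof proposal for Lemma \ref{lem:pveqvQv}.} The plan is to exploit the homogeneity of the feasible set $\Omega$ of \eqref{spSRsubtractmod} with respect to nonnegative scaling. Concretely, if $\bv \in \Omega$ then $s\bv \in \Omega$ for all $s \geqs 0$, because nonnegativity is preserved and $\|s\bv\|_0 \leqs \|\bv\|_0 \leqs m$. This reduces the problem to analyzing the one-dimensional restriction $s \mapsto g(s\bv) = \tfrac{s^2}{2}\bv^\top \bQ_\epsilon \bv - s\bp^\top \bv$ of the objective $g(\bv) := \tfrac{1}{2}\bv^\top \bQ_\epsilon \bv - \bp^\top \bv$.

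First I would show $\hat{\bv}\neq{\bm0}_N$. Since $\tilde{\bw}\in\Omega_1\subsetneqq\Omega$, every nonnegative scaling $t\tilde{\bw}$ is feasible for model \eqref{spSRsubtractmod}. The hypothesis $\bp^\top\tilde{\bw}>0$ in particular forces $\tilde{\bw}\neq{\bm0}_N$, so $\tilde{\bw}^\top\bQ_\epsilon\tilde{\bw}>0$ because $\bQ_\epsilon$ is positive definite by \eqref{def:pQQeps}. Minimizing $t\mapsto g(t\tilde{\bw})$ over $t\geqs 0$ gives $t^*=\frac{\bp^\top\tilde{\bw}}{\tilde{\bw}^\top\bQ_\epsilon\tilde{\bw}}>0$, and
\begin{equation*}
g(t^*\tilde{\bw}) = -\frac{(\bp^\top\tilde{\bw})^2}{2\,\tilde{\bw}^\top\bQ_\epsilon\tilde{\bw}} < 0 = g({\bm0}_N).
\end{equation*}
Thus the optimal value of \eqref{spSRsubtractmod} is strictly negative, so the optimal $\hat{\bv}$ cannot equal ${\bm0}_N$.

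Next I would establish the equality $\bp^\top\hat{\bv}=\hat{\bv}^\top\bQ_\epsilon\hat{\bv}$. Since $s\hat{\bv}\in\Omega$ for every $s\geqs 0$, optimality of $\hat{\bv}$ implies that $s=1$ minimizes the smooth convex quadratic $\varphi(s):=g(s\hat{\bv})=\tfrac{s^2}{2}\hat{\bv}^\top\bQ_\epsilon\hat{\bv}-s\,\bp^\top\hat{\bv}$ over $s\geqs 0$. Because $\hat{\bv}\neq{\bm0}_N$ gives $\hat{\bv}^\top\bQ_\epsilon\hat{\bv}>0$, $\varphi$ is strictly convex with a finite unconstrained minimizer; the minimizer $s=1$ is therefore an interior point of $[0,\infty)$, so the first-order condition $\varphi'(1)=0$ holds, yielding
\begin{equation*}
\bp^\top\hat{\bv} = \hat{\bv}^\top\bQ_\epsilon\hat{\bv}.
\end{equation*}
Positive definiteness of $\bQ_\epsilon$ combined with $\hat{\bv}\neq{\bm0}_N$ then forces both sides to be strictly positive, completing the lemma.

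I do not anticipate any serious obstacle: the only subtle points are verifying that nonnegative scalings remain feasible (which relies on the fact that $\Omega$ only involves a nonnegativity and a sparsity constraint, both scale-invariant in the relevant direction) and the need to rule out that the minimum of $\varphi$ on $[0,\infty)$ is attained on the boundary $s=0$, which is ruled out by the first step showing the optimal value is strictly below $g({\bm0}_N)=0$.
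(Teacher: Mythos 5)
Your proposal is correct and follows essentially the same route as the paper's proof: both exploit that $\Omega$ is closed under nonnegative scaling, compare $\hat{\bv}$ against the optimally rescaled points $t^*\tilde{\bw}$ and $t^*\hat{\bv}$ to get negativity of the optimal value (hence $\hat{\bv}\neq{\bm0}_N$), and then extract $\bp^\top\hat{\bv}=\hat{\bv}^\top\bQ_{\epsilon}\hat{\bv}$ from optimality along the ray through $\hat{\bv}$. The only cosmetic difference is that you phrase the last step as the first-order condition $\varphi'(1)=0$ for the one-dimensional restriction, whereas the paper rearranges the same comparison algebraically into $(\bp^\top\hat{\bv}-\hat{\bv}^\top\bQ_{\epsilon}\hat{\bv})^2\leqs0$; these are equivalent.
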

\begin{proof}
Since $\hat{\bv}$ is an optimal solution of model \eqref{spSRsubtractmod}, we know that $\hat{\bv}\in\Omega$. Let $\bw:=\frac{\bp^\top\tilde{\bw}}{\tilde{\bw}^\top\bQ_{\epsilon}\tilde{\bw}}\tilde{\bw}$. The facts $\tilde{\bw}\in\Omega_1$ and $\bp^\top\tilde{\bw}>0$ imply that $\bw\in\Omega$. Then it follows that
$$
\frac{1}{2}\hat{\bv}^\top\bQ_{\epsilon}\hat{\bv}-\bp^\top\hat{\bv}\leqs\frac{1}{2}\bw^\top\bQ_{\epsilon}\bw-\bp^\top\bw=-\frac{1}{2}\frac{(\bp^\top\tilde{\bw})^2}{\tilde{\bw}^\top\bQ_{\epsilon}\tilde{\bw}}<0.
$$
Hence $\bp^\top\hat{\bv}>\frac{1}{2}\hat{\bv}^\top\bQ_{\epsilon}\hat{\bv}\geqs0$ and $\hat{\bv}\neq{\bm0}_N$. Now by letting $\bv:=\frac{\bp^\top\hat{\bv}}{\hat{\bv}^\top\bQ_{\epsilon}\hat{\bv}}\hat{\bv}$, then $\bv\in\Omega$ and
\begin{equation}\label{neq:forpveqvQv}
\frac{1}{2}\hat{\bv}^\top\bQ_{\epsilon}\hat{\bv}-\bp^\top\hat{\bv}\leqs\frac{1}{2}\bv^\top\bQ_{\epsilon}\bv-\bp^\top\bv=-\frac{1}{2}\frac{(\bp^\top\hat{\bv})^2}{\hat{\bv}^\top\bQ_{\epsilon}\hat{\bv}}.
\end{equation}
Multiplying both sides of \eqref{neq:forpveqvQv} by $2\hat{\bv}^\top\bQ_{\epsilon}\hat{\bv}$ yields
$$
\left(\bp^\top\hat{\bv}-\hat{\bv}^\top\bQ_{\epsilon}\hat{\bv}\right)^2\leqs0,
$$
which implies that $\bp^\top\hat{\bv}=\hat{\bv}^\top\bQ_{\epsilon}\hat{\bv}>0$.
\end{proof}

\textbf{\textit{Proof of Theorem \ref{thm:spequifracsubtr}}.}
Let $\hat{\bv}$ be an optimal solution of model \eqref{model:Sharpeiota}. Then $\hat{\bv}\in\Omega_1$ and
$$
\frac{\bp^\top\hat{\bv}}{\sqrt{\hat{\bv}^\top\bQ_{\epsilon}\hat{\bv}}}\geqs\frac{\bp^\top\tilde{\bw}}{\sqrt{\tilde{\bw}^\top\bQ_{\epsilon}\tilde{\bw}}}>0.
$$
Defining $\tilde{\bv}:=\frac{\bp^\top\hat{\bv}}{\hat{\bv}^\top\bQ_{\epsilon}\hat{\bv}}\hat{\bv}$, we see that $\tilde{\bv}\in\Omega$ and
\begin{equation}\label{neq:sptvQepstvptv}
\frac{1}{2}\tilde{\bv}^\top\bQ_{\epsilon}\tilde{\bv}-\bp^\top\tilde{\bv}=\frac{1}{2}\frac{(\bp^\top\hat{\bv})^2}{(\hat{\bv}^\top\bQ_{\epsilon}\hat{\bv})^2}\hat{\bv}^\top\bQ_{\epsilon}\hat{\bv}-\frac{(\bp^\top\hat{\bv})^2}{\hat{\bv}^\top\bQ_{\epsilon}\hat{\bv}}=-\frac{1}{2}\frac{(\bp^\top\hat{\bv})^2}{\hat{\bv}^\top\bQ_{\epsilon}\hat{\bv}}<0.
\end{equation}
For any $\bu\in\Omega$ such that $\frac{1}{2}\bu^\top\bQ_{\epsilon}\bu-\bp^\top\bu<0$, we have $\bp^\top\bu>0$, $\bu\neq{\bm0}_N$ and $\tilde{\bu}:=\frac{\bu}{\bu^\top{\bm1}_N}\in\Omega_1$. Then the fact $\hat{\bv}$ is an optimal solution of model \eqref{model:Sharpeiota} implies that
\begin{equation}\label{neq:sppTvpTu2}
\frac{(\bp^\top\hat{\bv})^2}{\hat{\bv}^\top\bQ_{\epsilon}\hat{\bv}}\geqs\frac{(\bp^\top\tilde{\bu})^2}{\tilde{\bu}^\top\bQ_{\epsilon}\tilde{\bu}}=\frac{(\bp^\top\bu)^2}{\bu^\top\bQ_{\epsilon}\bu}.
\end{equation}
Note that
$$
\frac{1}{2}\bu^\top\bQ_{\epsilon}\bu-\bp^\top\bu+\frac{1}{2}\frac{(\bp^\top\bu)^2}{\bu^\top\bQ_{\epsilon}\bu}=\frac{1}{2\bu^\top\bQ_{\epsilon}\bu}(\bu^\top\bQ_{\epsilon}\bu-\bp^\top\bu)^2\geqs0,
$$
which combined with \eqref{neq:sppTvpTu2} and \eqref{neq:sptvQepstvptv} yields
$$
\frac{1}{2}\bu^\top\bQ_{\epsilon}\bu-\bp^\top\bu\geqs-\frac{1}{2}\frac{(\bp^\top\bu)^2}{\bu^\top\bQ_{\epsilon}\bu}\geqs-\frac{1}{2}\frac{(\bp^\top\hat{\bv})^2}{\hat{\bv}^\top\bQ_{\epsilon}\hat{\bv}}=\frac{1}{2}\tilde{\bv}^\top\bQ_{\epsilon}\tilde{\bv}-\bp^\top\tilde{\bv}.
$$
Therefore, $\tilde{\bv}$ is an optimal solution of model \eqref{spSRsubtractmod}.

Conversely, let $\hat{\bv}$ be an optimal solution of model \eqref{spSRsubtractmod}. It follows from Lemma \ref{lem:pveqvQv} that $\hat{\bv}\neq{\bm0}_N$ and $\bp^\top\hat{\bv}=\hat{\bv}^\top\bQ_{\epsilon}\hat{\bv}>0$. Thus $\hat{\bv}=\frac{\bp^\top\hat{\bv}}{\hat{\bv}^\top\bQ_{\epsilon}\hat{\bv}}\hat{\bv}$. For any $\bv\in\Omega$ such that $\bp^\top\bv>0$, we let $\bu:=\frac{\bp^\top\bv}{\bv^\top\bQ_{\epsilon}\bv}\bv$. Then $\bu\in\Omega$ and
\begin{equation}\label{neq:phatv2pv2}
-\frac{1}{2}\frac{(\bp^\top\hat{\bv})^2}{\hat{\bv}^\top\bQ_{\epsilon}\hat{\bv}}=\frac{1}{2}\hat{\bv}^\top\bQ_{\epsilon}\hat{\bv}-\bp^\top\hat{\bv}\leqs\frac{1}{2}\bu^\top\bQ_{\epsilon}\bu-\bp^\top\bu=-\frac{1}{2}\frac{(\bp^\top\bv)^2}{\bv^\top\bQ_{\epsilon}\bv}.
\end{equation}
Now we let $\bar{\bv}:=\frac{\hat{\bv}}{{\hat{\bv}}^\top{\bm1}_N}$. Then $\bar{\bv}\in\Omega_1$. Inequality \eqref{neq:phatv2pv2} yields that
$$
\frac{\bp^\top\bar{\bv}}{\sqrt{\bar{\bv}^\top\bQ_{\epsilon}\bar{\bv}}}=\frac{\bp^\top\hat{\bv}}{\sqrt{\hat{\bv}^\top\bQ_{\epsilon}\hat{\bv}}}\geqs\frac{\bp^\top\bv}{\sqrt{\bv^\top\bQ_{\epsilon}\bv}}.
$$
Note that $\Omega_1\subset\Omega$. Therefore, $\bar{\bv}$ is an optimal solution of model \eqref{model:Sharpeiota}.
\hspace*{\fill}~\QED

\subsection{Proof of Theorem \ref{thm:proxchar}}\label{proof:proxchar}
To prove Theorem \ref{thm:proxchar}, we first investigate the properties of function $f$ in Proposition \ref{prop:funf}, and then recall two well-known results as Lemmas \ref{lem:descent} and \ref{lem:strongconvex}. Let $\psi$ be a function from $\bbRn$ to $[-\infty,+\infty]$. Then $\psi$ is proper if $-\infty\notin\psi(\bbRn)$ and $\{\bx\in\bbRn|\hspace{2pt}\psi(\bx)<+\infty\}\neq\varnothing$. Let $\psi:\bbRn\to\overline{\bbR}$ be a proper function. We say that $\psi$ is convex if for any $\bx,\by\in\bbRn$ and any $\lambda\in(0, 1)$, $\psi(\lambda\bx+(1-\lambda)\by)\leqs\lambda\psi(\bx)+(1-\lambda)\psi(\by)$. If there exists $\beta>0$ such that $\psi-\frac{\beta}{2}\|\cdot\|_2^2$ is convex, then $\psi$ is said to be $\beta$-strongly convex.

\begin{proposition}\label{prop:funf}
Let $f:\bbRN\to\bbR$ be defined in \eqref{targaltermod}. Then the following hold:
\begin{itemize}
\item[$(i)$] $f$ is $\epsilon$-strongly convex on $\bbRN$;
\item[$(ii)$] $\nabla f$ is $\|\bQ_{\epsilon}\|_2$-Lipschitz continuous on $\bbRN$.
\end{itemize}
\end{proposition}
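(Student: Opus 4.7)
My plan is to prove the two statements separately by direct computation, exploiting the structure $\bQ_{\epsilon}=\bQ^\top\bQ+\epsilon\bI$ from \eqref{def:pQQeps}. The key observation is that $\bQ^\top\bQ$ is symmetric positive semidefinite, so $\bQ_{\epsilon}$ is symmetric positive definite with smallest eigenvalue at least $\epsilon$ and largest eigenvalue exactly $\|\bQ_{\epsilon}\|_2$ (since it is symmetric). Both statements will fall out of these spectral facts.

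For item $(i)$, I would use the characterization that a proper function $\psi$ is $\beta$-strongly convex if and only if $\psi-\frac{\beta}{2}\|\cdot\|_2^2$ is convex. Substituting $\bQ_{\epsilon}=\bQ^\top\bQ+\epsilon\bI$ into the definition of $f$ in \eqref{targaltermod}, I would compute
\[
f(\bv)-\frac{\epsilon}{2}\|\bv\|_2^2=\frac{1}{2}\bv^\top(\bQ_{\epsilon}-\epsilon\bI)\bv-\bp^\top\bv=\frac{1}{2}\|\bQ\bv\|_2^2-\bp^\top\bv.
\]
The map $\bv\mapsto\frac{1}{2}\|\bQ\bv\|_2^2$ is a quadratic form associated with the positive semidefinite matrix $\bQ^\top\bQ$, and the term $-\bp^\top\bv$ is linear, so the sum is convex on $\bbRN$. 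Therefore $f$ is $\epsilon$-strongly convex.

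For item $(ii)$, I would differentiate $f$ directly using the symmetry of $\bQ_{\epsilon}$ to obtain $\nabla f(\bv)=\bQ_{\epsilon}\bv-\bp$ for every $\bv\in\bbRN$. Then, for arbitrary $\bv_1,\bv_2\in\bbRN$, the definition of the spectral norm gives
\[
\|\nabla f(\bv_1)-\nabla f(\bv_2)\|_2=\|\bQ_{\epsilon}(\bv_1-\bv_2)\|_2\leqs\|\bQ_{\epsilon}\|_2\,\|\bv_1-\bv_2\|_2,
\]
which is the desired $\|\bQ_{\epsilon}\|_2$-Lipschitz continuity of $\nabla f$.

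Neither step presents a real obstacle: the proposition is essentially a bookkeeping exercise that records the smoothness and strong convexity constants of the quadratic $f$ in terms of the spectrum of $\bQ_{\epsilon}$. These constants are precisely what will be needed in the convergence analysis of PGA (in particular for the step size range $\alpha\in(0,1/\|\bQ_{\epsilon}\|_2)$ in Proposition \ref{prop:Fdecr} and for the strong-convexity-based arguments on the restricted convex model \eqref{spSRsubtractmod3}). The only minor point to be careful about is to state that $\bQ_{\epsilon}$ is symmetric when differentiating, so that the gradient formula $\nabla f(\bv)=\bQ_{\epsilon}\bv-\bp$ does not carry a spurious symmetrization factor.
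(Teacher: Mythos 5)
Your proposal is correct and follows essentially the same route as the paper: both subtract $\frac{\epsilon}{2}\|\cdot\|_2^2$ from $f$ and observe that the remainder $\frac{1}{2}\bv^\top\bQ^\top\bQ\bv-\bp^\top\bv$ is convex because $\bQ^\top\bQ$ is positive semidefinite, and both obtain item $(ii)$ from $\nabla f(\bv)=\bQ_{\epsilon}\bv-\bp$ and the spectral-norm bound $\|\bQ_{\epsilon}(\bx-\by)\|_2\leqs\|\bQ_{\epsilon}\|_2\|\bx-\by\|_2$. The only cosmetic difference is that you write the convex part as $\frac{1}{2}\|\bQ\bv\|_2^2-\bp^\top\bv$ while the paper cites the positive semidefiniteness of the Hessian directly.
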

\begin{proof}
Let $\tilde{f}(\bv):=f(\bv)-\frac{\epsilon}{2}\|\bv\|_2^2=\frac{1}{2}\bv^\top\bQ^\top\bQ\bv-\bp^\top\bv$, $\bv\in\bbRN$.
Since the Hessian matrix $\bQ^\top\bQ$ of $\tilde{f}$ is positive semidefinite, we know that $\tilde{f}$ is convex on $\bbRN$ (see Proposition B.4 of \cite{nonlinprog}). Thus item $(i)$ holds. The gradient of $f$ is given by $\nabla f(\bv)=\bQ_{\epsilon}\bv-\bp$. For all $\bx,\by\in\bbRN$, $\|\nabla f(\bx)-\nabla f(\by)\|_2\leqs\|\bQ_{\epsilon}\|_2\|\bx-\by\|_2$, which implies item $(ii)$.
\end{proof}

\begin{lemma}[Proposition A.24 of \cite{nonlinprog}]\label{lem:descent}
Let function $\psi:\bbRn\to\bbR$ be differentiable with an $L$-Lipschitz continuous gradient, where $L>0$. Then
$$
\psi(\by)-\psi(\bx)\leqs\langle\nabla\psi(\bx),\by-\bx\rangle+\frac{L}{2}\|\by-\bx\|_2^2
$$
holds for all $\bx,\by\in\bbRn$.
\end{lemma}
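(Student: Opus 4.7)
The plan is to invoke the fundamental theorem of calculus along the line segment joining $\bx$ and $\by$, and then control the residual arising from replacing the gradient along this segment by its value at the fixed endpoint $\bx$ using the Lipschitz hypothesis. This is the standard route to the so-called descent lemma, and since $\nabla\psi$ is continuous (being Lipschitz), all of the analytic operations below are legitimate.

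Concretely, I would introduce the scalar auxiliary function $g(t):=\psi(\bx+t(\by-\bx))$ for $t\in[0,1]$. By the chain rule, $g$ is differentiable with $g'(t)=\langle\nabla\psi(\bx+t(\by-\bx)),\by-\bx\rangle$, so the fundamental theorem of calculus yields
\begin{equation*}
\psi(\by)-\psi(\bx)=g(1)-g(0)=\int_0^1\langle\nabla\psi(\bx+t(\by-\bx)),\by-\bx\rangle\,dt.
\end{equation*}
The next step is to add and subtract $\nabla\psi(\bx)$ inside the integrand, splitting the expression into
\begin{equation*}
\int_0^1\langle\nabla\psi(\bx),\by-\bx\rangle\,dt+\int_0^1\langle\nabla\psi(\bx+t(\by-\bx))-\nabla\psi(\bx),\by-\bx\rangle\,dt.
\end{equation*}
The first integral is trivial and produces exactly the linear term $\langle\nabla\psi(\bx),\by-\bx\rangle$ appearing in the desired inequality.

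To handle the remaining integral I would apply the Cauchy--Schwarz inequality pointwise in $t$, which bounds the inner product by $\|\nabla\psi(\bx+t(\by-\bx))-\nabla\psi(\bx)\|_2\cdot\|\by-\bx\|_2$. The $L$-Lipschitz continuity of $\nabla\psi$ then dominates this by $L\,t\,\|\by-\bx\|_2^2$. Integrating $Lt$ on $[0,1]$ gives $L/2$, producing the quadratic term $\frac{L}{2}\|\by-\bx\|_2^2$ and completing the bound.

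No serious obstacle is anticipated; the only mild subtlety lies in verifying measurability and integrability of $t\mapsto\langle\nabla\psi(\bx+t(\by-\bx)),\by-\bx\rangle$, which is automatic because $\nabla\psi$ is continuous and the segment is compact. This matches the fact that the statement is cited verbatim as Proposition A.24 of Bertsekas's \emph{Nonlinear Programming}, where it is treated as a standard preliminary.
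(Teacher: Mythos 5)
Your argument is correct and complete: the paper does not prove this lemma but simply cites it as Proposition A.24 of Bertsekas's \emph{Nonlinear Programming}, and your line-integral argument (fundamental theorem of calculus along the segment, add-and-subtract $\nabla\psi(\bx)$, Cauchy--Schwarz plus Lipschitz continuity, then integrate $Lt$ over $[0,1]$) is precisely the standard proof given there. Nothing is missing, and the measurability remark is indeed automatic from continuity of $\nabla\psi$.
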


\begin{lemma}[Exercise 17.5 of \cite{bauschke2017convex}]\label{lem:strongconvex}
Let $\psi:\bbRn\to\bbR$ be differentiable and $\beta>0$. Then $\psi$ is $\beta$-strongly convex if and only if
$$
\psi(\by)-\psi(\bx)\geqs\langle\nabla\psi(\bx),\by-\bx\rangle+\frac{\beta}{2}\|\by-\bx\|_2^2
$$
holds for all $\bx,\by\in\bbRn$.
\end{lemma}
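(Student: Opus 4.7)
The plan is to reduce the claim to the standard first-order characterization of convexity for differentiable functions, applied to the auxiliary function $\phi := \psi - \frac{\beta}{2}\|\cdot\|_2^2$. By the definition of $\beta$-strong convexity given just before the lemma, $\psi$ is $\beta$-strongly convex if and only if $\phi$ is convex, and differentiability of $\psi$ together with differentiability of $\frac{\beta}{2}\|\cdot\|_2^2$ gives $\nabla\phi(\bx)=\nabla\psi(\bx)-\beta\bx$. The goal then becomes showing that convexity of $\phi$ is equivalent to
$$\phi(\by)-\phi(\bx)\geqs\langle\nabla\phi(\bx),\by-\bx\rangle\quad\text{for all }\bx,\by\in\bbRn,$$
because expanding $\phi(\by)-\phi(\bx)=\psi(\by)-\psi(\bx)-\frac{\beta}{2}(\|\by\|_2^2-\|\bx\|_2^2)$ and $\langle\nabla\phi(\bx),\by-\bx\rangle=\langle\nabla\psi(\bx),\by-\bx\rangle-\beta\langle\bx,\by-\bx\rangle$, then using the identity $\frac{\beta}{2}(\|\by\|_2^2-\|\bx\|_2^2)-\beta\langle\bx,\by-\bx\rangle=\frac{\beta}{2}\|\by-\bx\|_2^2$, converts the plain gradient inequality for $\phi$ into exactly the quadratic gradient inequality for $\psi$ stated in the lemma.

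For the forward direction, assume $\phi$ is convex. Fix $\bx,\by\in\bbRn$ and $t\in(0,1)$. Convexity gives $\phi(\bx+t(\by-\bx))\leqs(1-t)\phi(\bx)+t\phi(\by)$; rearranging yields
$$\frac{\phi(\bx+t(\by-\bx))-\phi(\bx)}{t}\leqs\phi(\by)-\phi(\bx).$$
Letting $t\downarrow0$, differentiability of $\phi$ at $\bx$ produces the directional derivative $\langle\nabla\phi(\bx),\by-\bx\rangle$ on the left, establishing the plain first-order inequality. Applying the algebraic substitution above returns the $\beta$-strong version for $\psi$.

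For the converse, assume the quadratic gradient inequality for $\psi$, which (by the same substitution) is equivalent to the plain first-order inequality for $\phi$. Fix $\bx,\by\in\bbRn$ and $t\in(0,1)$, and set $\bz:=(1-t)\bx+t\by$. Applying the inequality at the base point $\bz$ to each of $\bx$ and $\by$ gives
$$\phi(\bx)\geqs\phi(\bz)+\langle\nabla\phi(\bz),\bx-\bz\rangle,\qquad\phi(\by)\geqs\phi(\bz)+\langle\nabla\phi(\bz),\by-\bz\rangle.$$
Forming the convex combination $(1-t)\cdot(\text{first})+t\cdot(\text{second})$ makes the $\nabla\phi(\bz)$ terms cancel because $(1-t)(\bx-\bz)+t(\by-\bz)={\bm0}_n$, yielding $(1-t)\phi(\bx)+t\phi(\by)\geqs\phi(\bz)$. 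Hence $\phi$ is convex and $\psi$ is $\beta$-strongly convex.

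The only mildly delicate step is the limit $t\downarrow0$ in the forward direction, which is routine given differentiability on the whole space; no subgradient machinery or closedness hypotheses are needed because $\psi$ is real-valued and differentiable on $\bbRn$. Everything else is bookkeeping around the algebraic identity that converts between the $\phi$-inequality and the $\psi$-inequality, so I expect no substantive obstacle.
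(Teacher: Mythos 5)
Your proof is correct. The paper does not prove this lemma at all --- it is imported verbatim as Exercise 17.5 of the cited convex-analysis reference --- so there is no in-paper argument to compare against. What you supply is the standard textbook derivation: reduce to the first-order characterization of convexity for $\phi:=\psi-\frac{\beta}{2}\|\cdot\|_2^2$, verify the algebraic identity $\frac{\beta}{2}\bigl(\|\by\|_2^2-\|\bx\|_2^2\bigr)-\beta\langle\bx,\by-\bx\rangle=\frac{\beta}{2}\|\by-\bx\|_2^2$ that converts the $\phi$-inequality into the $\psi$-inequality, and prove both directions of the first-order characterization (difference quotients for the forward direction, the convex-combination cancellation at $\bz=(1-t)\bx+t\by$ for the converse). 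All steps check out, and your use of the paper's definition of $\beta$-strong convexity (convexity of $\psi-\frac{\beta}{2}\|\cdot\|_2^2$) is exactly the intended one. The only thing your write-up buys beyond the paper is self-containedness; nothing is lost or gained mathematically.
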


\textbf{\textit{Proof of Theorem \ref{thm:proxchar}.}}
We first show that \eqref{eq:proxiotaOmegav} holds when $\bv^*$ is a globally optimal solution of model \eqref{targaltermod}. By the definition of proximity operator, \eqref{eq:proxiotaOmegav} is equivalent to
\begin{equation*}
\bv^*=\argmin_{\bu\in\bbR^N}\iota_{\Omega}(\bu)+\frac{1}{2}\left\|\bu-\bv^*+\alpha\nabla f(\bv^*)\right\|_2^2,
\end{equation*}
that is,
$$
\iota_{\Omega}(\bu)+\frac{1}{2}\left\|\bu-\bv^*+\alpha\nabla f(\bv^*)\right\|_2^2\geqs\iota_{\Omega}(\bv^*)+\frac{1}{2}\left\|\alpha\nabla f(\bv^*)\right\|_2^2,\ \ \mbox{for all}\ \bu\in\bbRN.
$$
According to the definition of $\iota_{\Omega}$ in \eqref{def:iotaOmegam} and the fact $\bv^*\in\Omega$, the above inequality can be simply rewritten as
\begin{equation}\label{neq:gradfuvs}
\langle\nabla f(\bv^*),\bu-\bv^*\rangle+\frac{1}{2\alpha}\|\bu-\bv^*\|_2^2\geqs0,\ \ \mbox{for all}\ \bu\in\Omega.
\end{equation}
To prove \eqref{eq:proxiotaOmegav}, it suffices to show that \eqref{neq:gradfuvs} holds. From Proposition \ref{prop:funf}, we know that $f$ is $\epsilon$-strongly convex and $\nabla f$ is $\|\bQ_{\epsilon}\|$-Lipschitz continuous on $\bbRN$. Then Lemma \ref{lem:descent} yields that
\begin{equation}\label{neq:descforf}
f(\bu)-f(\bv^*)\leqs\langle\nabla f(\bv^*),\bu-\bv^*\rangle+\frac{\|\bQ_{\epsilon}\|_2}{2}\|\bu-\bv^*\|_2^2,\ \ \mbox{for all}\ \bu\in\Omega.
\end{equation}
Since $\bv^*$ is a globally optimal solution of model \eqref{targaltermod}, $f(\bu)-f(\bv^*)\geqs0$ for all $\bu\in\Omega$, which together with \eqref{neq:descforf} and the fact $\alpha\in\left(0,\frac{1}{\|\bQ_{\epsilon}\|_2}\right]$ implies \eqref{neq:gradfuvs}. This proves that \eqref{eq:proxiotaOmegav} holds.

Conversely, if $\alpha\geqs\frac{1}{\epsilon}$ and \eqref{eq:proxiotaOmegav} holds, then we have \eqref{neq:gradfuvs}. Recall that $f$ is $\epsilon$-strongly convex. It follows from Lemma \ref{lem:strongconvex} that
$$
f(\bu)-f(\bv^*)\geqs\langle\nabla f(\bv^*),\bu-\bv^*\rangle+\frac{\epsilon}{2}\|\bu-\bv^*\|_2^2,
$$
which together with the fact $\alpha\geqs\frac{1}{\epsilon}$ and \eqref{neq:gradfuvs} implies that $f(\bu)-f(\bv^*)\geqs0$ for all $\bu\in\Omega$. Thus the assertion in item $(i)$ holds.

We then prove item $(ii)$. The fact \eqref{eq:proxiotaOmegav} holds implies \eqref{neq:gradfuvs}. For $\delta>0$, we define
$$
\tilde{\Omega}_{\delta}:=\{\bu\in B(\bv^*;\delta)|\hspace{2pt}\langle\nabla f(\bv^*),\bu-\bv^*\rangle=0\}.
$$
Note that when $\bu$ tends to $\bv^*$, the quadratic term $\frac{1}{2\alpha}\|\bu-\bv^*\|_2^2$ is of higher order infinitesimal than the linear term $|\langle\nabla f(\bv^*),\bu-\bv^*\rangle|$. There must be some $\delta>0$ such that
\begin{equation}\label{neq:gradfuvsgequvs2}
|\langle\nabla f(\bv^*),\bu-\bv^*\rangle|>\frac{1}{2\alpha}\|\bu-\bv^*\|_2^2,\ \ \mbox{for all}\ \bu\in B(\bv^*;\delta)\backslash\tilde{\Omega}_{\delta}.
\end{equation}
We then show that
\begin{equation}\label{neq:gradfuvsgeqs0}
\langle\nabla f(\bv^*),\bu-\bv^*\rangle\geqs0,\ \ \mbox{for all}\  \bu\in \left(B(\bv^*;\delta)\backslash\tilde{\Omega}_{\delta}\right)\cap\Omega.
\end{equation}
Otherwise, there exists some $\tilde{\bu}\in\left(B(\bv^*;\delta)\backslash\tilde{\Omega}_{\delta}\right)\cap\Omega$ such that $\langle\nabla f(\bv^*),\tilde{\bu}-\bv^*\rangle<0$. It follows from \eqref{neq:gradfuvsgequvs2} that
$$
\langle\nabla f(\bv^*),\tilde{\bu}-\bv^*\rangle+\frac{1}{2\alpha}\|\tilde{\bu}-\bv^*\|_2^2<0,
$$
which contradicts \eqref{neq:gradfuvs}. Hence \eqref{neq:gradfuvsgeqs0} holds. This together with the definition of $\tilde{\Omega}_{\delta}$ yields that
\begin{equation}\label{neq:gradfuvstargeq0}
\langle\nabla f(\bv^*),\bu-\bv^*\rangle\geqs0,\ \ \mbox{for all}\  \bu\in B(\bv^*;\delta)\cap\Omega.
\end{equation}
Recall that $f$ is convex and differentiable on $\bbRN$. According to \eqref{neq:gradfuvstargeq0} and the first order condition for convexity (Proposition B.3 of \cite{nonlinprog}),
$$
f(\bu)-f(\bv^*)\geqs\langle\nabla f(\bv^*),\bu-\bv^*\rangle\geqs0,\ \ \mbox{for all}\ \bu\in B(\bv^*;\delta)\cap\Omega,
$$
which implies that $\bv^*$ is a locally optimal solution of model \eqref{targaltermod}.
\hspace*{\fill}~\QED

\subsection{Proof of Proposition \ref{prop:proxiotaOmega}}\label{proof:proxiotaOmega}
\begin{proof}
By the definitions of $\iota_{\Omega}$ and its proximity operator, we have
$$
\prox_{\iota_{\Omega}}(\bv)=\argmin_{\bu\in\Omega}\|\bu-\bv\|_2.
$$
To prove that $\bh\in\prox_{\iota_{\Omega}}(\bv)$, it is equivalent to show that
\begin{equation}\label{neq:equiprox}
\|\bh-\bv\|_2^2\leqs\|\bu-\bv\|_2^2,\ \ \mbox{for all}\ \bu\in\Omega.
\end{equation}
For any $\bu\in\Omega$, there exists an index set $J_{\bu}\in\bbN_{N}$ with $m$ elements such that $u_j=0$ for all $j\in\bbN_{N}\backslash J_{\bu}$. Let $J_{\text{neg}}^{\bv}$ be the index set of negative components in $\bv$ and $J_{\bu}':=(\bbN_{N}\backslash J_{\bu})\cup J_{\text{neg}}^{\bv}$. Since $\bu\geqs{\bm0}_N$, $\|\bu-\bv\|_2^2\geqs\sum_{j\in J_{\bu}'}v_j^2$. Let $J_{\bh}'=\bbN_{N}\backslash J^{\bv}$. Then $J_{\text{neg}}^{\bv}\subset J_{\bh}'$ and $\|\bh-\bv\|_2^2=\sum_{j\in J_{\bp}'}v_j^2$. If $m_{\bv}>m$, then $J^{\bv}=J_{m\text{-pos}}^{\bv}$. We are easy to see from the definition of $J_{m\text{-pos}}^{\bv}$ that
$$
\sum_{j\in J_{\bu}'}v_j^2-\sum_{j\in J_{\bh}'}v_j^2=\sum_{j\in\bbN_N\backslash(J_{\bu}\cup J_{\text{neg}}^{\bv})}v_j^2-\sum_{j\in\bbN_N\backslash(J_{m\text{-pos}}^{\bv}\cup J_{\text{neg}}^{\bv})}v_j^2\geqs0.
$$
If $m_{\bv}\leqs m$, then $J^{\bv}=J_{\text{pos}}^{\bv}$ and $\sum_{j\in J_{\bu}'}v_j^2-\sum_{j\in J_{\bh}'}v_j^2=\sum_{j\in(\bbN_{N}\backslash J_{\bu})\cup J_{\text{neg}}^{\bv}}v_j^2-\sum_{j\in J_{\text{neg}}^{\bv}}v_j^2\geqs0$. Now we conclude from the above two cases that
$$
\|\bu-\bv\|_2^2-\|\bh-\bv\|_2^2\geqs\sum_{j\in J_{\bu}'}v_j^2-\sum_{j\in J_{\bh}'}v_j^2\geqs0,
$$
that is, \eqref{neq:equiprox} holds. This completes the proof.
\end{proof}

\subsection{Proof of Proposition \ref{prop:Fdecr}}\label{proof:Fdecr}
\begin{proof}
Item $(i)$ follows from \eqref{PGAiter1} and the definition of $\prox_{\iota_{\Omega}}$ directly. Then we have that $\iota_{\Omega}(\bv^{(k)})=0$ for all $k\in\bbN$. To prove item $(ii)$, it suffices to show that
\begin{equation}\label{neq:fvkkleqfvk}
f(\bv^{(k+1)})+a\|\bv^{(k+1)}-\bv^{(k)}\|_2^2\leqs f(\bv^{(k)}),\ \ \mbox{for all}\ k\in\bbN.
\end{equation}
Note that $a=\frac{1}{\alpha}-\|\bQ_{\epsilon}\|_2>0$, since $\alpha\in\left(0,\frac{1}{\|\bQ_{\epsilon}\|_2}\right)$. Let
\begin{equation}\label{def:varphiprox}
\varphi(\bu):=\frac{1}{2}\left\|\bu-\bv^{(k)}+\alpha\nabla f(\bv^{(k)})\right\|_2^2+\iota_{\Omega}(\bu),\ \ \bu\in\bbRN.
\end{equation}
Then \eqref{PGAiter1} implies that $\varphi(\bv^{(k+1)})\leq\varphi(\bv^{(k)})$, that is,
\begin{equation}\label{neq:vkkvkfvkkfvk}
\langle\nabla f(\bv^{(k)}),\bv^{(k+1)}-\bv^{(k)}\rangle\leqs-\frac{1}{2\alpha}\|\bv^{(k+1)}-\bv^{(k)}\|_2^2,\ \ \mbox{for all}\ k\in\bbN,
\end{equation}
It follows from Lemma \ref{lem:descent} that
\begin{equation}\label{neq:desclemfvkkvk}
f(\bv^{(k+1)})-f(\bv^{(k)})\leqs\langle\nabla f(\bv^{(k)}),\bv^{(k+1)}-\bv^{(k)}\rangle+\frac{\|\bQ_{\epsilon}\|_2}{2}\|\bv^{(k+1)}-\bv^{(k)}\|_2^2.
\end{equation}
Combining \eqref{neq:vkkvkfvkkfvk} and \eqref{neq:desclemfvkkvk} yields \eqref{neq:fvkkleqfvk}. Thus item $(ii)$ holds. Now that $F$ is monotonically decreasing, according to the monotone convergence theorem, to prove item $(iii)$, it suffices to show that function $F$ is bounded below on $\Omega$. Solving $\nabla f(\bv^*)=0$ gives $\bv^*=\bQ_{\epsilon}^{-1}\bp$. Since $f$ is convex and differentiable on $\bbRN$, $f$ attains the minimum value at $\bQ_{\epsilon}^{-1}\bp$ on $\bbRN$. Hence $f(\bv)\geqs f\left(\bQ_{\epsilon}^{-1}\bp\right)=-\frac{1}{2}\bp^\top\bQ_{\epsilon}^{-1}\bp$ for all $\bv\in\bbRN$, which implies that $F(\bv)\geqs-\frac{1}{2}\bp^\top\bQ_{\epsilon}^{-1}\bp$ for all $\bv\in\Omega$. Therefore, item $(iii)$ holds. Now taking the limit on both sides of the inequality in item $(ii)$ yields item $(iv)$ immediately. This completes the proof.
\end{proof}

\subsection{Proof of Theorem \ref{thm:convergtolocmin}}\label{proof:convergtolocmin}

In order to prove Theorem \ref{thm:convergtolocmin}, it is necessary to review several definitions and establish several preliminary results. First, We recall the notions of subdifferentials and critical point. The lower limit of function $\psi$ at $\bx$ and the domain of $\psi$ are defined by
\begin{equation}\label{def:lowlim}
\liminf_{\bm{y}\to\bm{x}}\psi(\by):=\lim_{\delta\to0^+}\left(\inf_{\by\in B(\bx;\delta)}\psi(\by)\right)
\end{equation}
and
$$
\dom\psi:=\{\bx\in\bbRn|\hspace{2pt}\psi(\bx)<+\infty\},
$$
respectively. We say that $\psi$ is lower semicontinuous at $\bx\in\bbRn$ if $\psi(\bx)\leqs\liminf\limits_{\bu\to\bx}\psi(\bu)$. If $\psi$ is lower semicontinuous at every $\bx\in\bbRn$, then $\psi$ is lower semicontinuous on $\bbRn$ \cite{rockafellar2009variational}.

\begin{definition}[Subdifferentials and critical point]\label{subdifferentials}
Let $\psi:\bbRn\to\overline{\bbR}$ be a proper lower semicontinuous function.
\begin{itemize}
\item[$(i)$] For each $\bm{x}\in\dom\psi$, the Fr{\'e}chet subdifferential of $\psi$ at $\bm{x}$, written by $\hat{\partial}\psi(\bm{x})$, is the set of all vectors $\bm{u}\in\bbRn$ which satisfy
$$
\liminf_{\substack{\bm{y}\to\bm{x}\\ \bm{y}\neq\bm{x}}}\frac{\psi(\bm{y})-\psi(\bm{x})-\langle\bm{u},\bm{y}-\bm{x}\rangle}{\|\bm{y}-\bm{x}\|_2}\geqs0.
$$
When $\bm{x}\notin\dom\psi$, we set $\hat{\partial}\psi(\bm{x})=\varnothing$.
\item[$(ii)$] The limiting-subdifferential, or simply the subdifferential of $\psi$ at $\bm{x}\in\dom\psi$, written by $\partial\psi(\bm{x})$, is defined through the following closure process
\begin{equation*}
\partial\psi(\bm{x}):=\{\bm{u}\in\mathbb{R}^n|\hspace{2pt}\exists\bm{x}^k\to\bm{x},\ \psi(\bm{x}^k)\to\psi(\bm{x})\ \mbox{and}\ \bm{u}^k\in \hat{\partial}\psi(\bm{x}^k)\to\bm{u}\ \mbox{as}\ k\to+\infty\}.
\end{equation*}
\end{itemize}
We call an element in $\partial\psi(\bm{x})$ subgradient of $\psi$ at $\bx$. We say that $\bx$ is a critical point of $\psi$ if $\bm{0}_n\in\partial\psi(\bx)$.
\end{definition}

We also recall the following known results about subdifferential from Theorem 8.6, Exercise 8.8 (c) and Theorem 10.1 of \cite{rockafellar2009variational}, respectively.

\begin{fact}\label{fact:Frechetsublimit}
For $\bx\in\dom\psi$, $\hat{\partial}\psi(\bx)\subset\partial\psi(\bx)$.
\end{fact}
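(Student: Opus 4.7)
The plan is to read off the inclusion directly from the definition of the limiting subdifferential by choosing trivial (constant) approximating sequences. Specifically, I would take an arbitrary $\bu\in\hat{\partial}\psi(\bx)$ and produce sequences $\{\bx^k\}_{k\in\bbN}$ and $\{\bu^k\}_{k\in\bbN}$ satisfying every clause in the definition of $\partial\psi(\bx)$ given in Definition \ref{subdifferentials}, item $(ii)$.

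First I would fix $\bx\in\dom\psi$ and let $\bu\in\hat{\partial}\psi(\bx)$; note this requires $\hat{\partial}\psi(\bx)\neq\varnothing$, but otherwise the inclusion is vacuous. Second, I would set $\bx^k:=\bx$ and $\bu^k:=\bu$ for every $k\in\bbN$. Third, I would verify the four required conditions: $\bx^k\to\bx$ trivially; $\psi(\bx^k)=\psi(\bx)\to\psi(\bx)$ trivially (and is finite because $\bx\in\dom\psi$); $\bu^k\in\hat{\partial}\psi(\bx^k)$ holds because $\bx^k=\bx$ and $\bu\in\hat{\partial}\psi(\bx)$ by choice; and $\bu^k\to\bu$ trivially. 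By the definition of $\partial\psi(\bx)$ this yields $\bu\in\partial\psi(\bx)$, and since $\bu\in\hat{\partial}\psi(\bx)$ was arbitrary, the inclusion $\hat{\partial}\psi(\bx)\subset\partial\psi(\bx)$ follows.

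There is no real obstacle here: the fact is an immediate consequence of the closure construction in the definition of the limiting subdifferential, since that construction is explicitly designed to contain every Fr\'echet subgradient. The only thing one must be careful about is that the definition of $\partial\psi$ requires $\psi(\bx^k)\to\psi(\bx)$, which is automatic when $\bx^k$ is constant but would matter for less trivial sequences; using constant sequences sidesteps any continuity issue. Because this is essentially a one-line verification from Definition \ref{subdifferentials}, I would present it as a short self-contained proof rather than invoking Rockafellar--Wets by citation.
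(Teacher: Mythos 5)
Your argument is correct and complete. Fixing $\bu\in\hat{\partial}\psi(\bx)$ and taking the constant sequences $\bx^k\equiv\bx$, $\bu^k\equiv\bu$ satisfies every clause of the closure construction in Definition \ref{subdifferentials}$(ii)$ — in particular $\psi(\bx^k)\to\psi(\bx)$ holds trivially and finiteness is guaranteed by $\bx\in\dom\psi$ and properness of $\psi$ — so $\bu\in\partial\psi(\bx)$, and the empty case is vacuous as you note. The only difference from the paper is one of presentation: the paper does not prove this statement at all, but recalls it as a known result with a citation to Theorem~8.6 of Rockafellar--Wets, whereas you supply the (standard) one-line verification explicitly. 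Your self-contained version is arguably preferable in context, since the paper's Definition \ref{subdifferentials} already contains everything needed and the constant-sequence argument is exactly how the cited reference establishes the inclusion; nothing is gained by outsourcing it, and nothing is lost by writing it out.
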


\begin{fact}\label{fact:frechetgrad}
Let $\psi_1:\bbR^n\to\overline{\bbR}$ and $\psi_2:\bbR^n\to\overline{\bbR}$ be two proper lower semicontinuous functions and $\bx\in\bbRn$. If $\psi_1$ is differentiable on a neighborhood of $\bx$ and $\psi_2$ is finite at $\bx$, then
$$
\partial(\psi_1+\psi_2)(\bx)=\nabla\psi_1(\bx)+\partial\psi_2(\bx).
$$
\end{fact}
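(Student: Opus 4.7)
The plan is to prove the sum rule at the level of Fréchet subdifferentials first, and then lift the equality to the limiting subdifferential via the definitional closure process from Definition \ref{subdifferentials}. Differentiability of $\psi_1$ on a whole neighborhood of $\bx$ (which in the setting we need implies continuity of $\nabla\psi_1$ at $\bx$, and at least differentiability at every point of a neighborhood we approach along) is exactly what makes the first-order Taylor expansion of $\psi_1$ absorb cleanly into the $o(\|\by-\bx\|_2)$ term and also what lets gradients converge along limiting sequences.

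First I would show the Fréchet sum rule
\[
\hat{\partial}(\psi_1+\psi_2)(\bx)=\nabla\psi_1(\bx)+\hat{\partial}\psi_2(\bx).
\]
Fix $\bu\in\hat{\partial}(\psi_1+\psi_2)(\bx)$. Since $\psi_1$ is differentiable at $\bx$, write $\psi_1(\by)=\psi_1(\bx)+\langle\nabla\psi_1(\bx),\by-\bx\rangle+r(\by)$ with $r(\by)/\|\by-\bx\|_2\to 0$ as $\by\to\bx$. Subtracting this expansion inside the Fréchet inequality for $\psi_1+\psi_2$ yields
\[
\liminf_{\by\to\bx,\,\by\neq\bx}\frac{\psi_2(\by)-\psi_2(\bx)-\langle\bu-\nabla\psi_1(\bx),\by-\bx\rangle}{\|\by-\bx\|_2}\geqs 0,
\]
so $\bu-\nabla\psi_1(\bx)\in\hat{\partial}\psi_2(\bx)$. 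The reverse inclusion is symmetric: if $\bv\in\hat{\partial}\psi_2(\bx)$, adding back the same Taylor expansion of $\psi_1$ turns the Fréchet inequality for $\psi_2$ with subgradient $\bv$ into the Fréchet inequality for $\psi_1+\psi_2$ with subgradient $\nabla\psi_1(\bx)+\bv$.

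Next I would lift this equality to $\partial$. For the forward inclusion, take $\bu\in\partial(\psi_1+\psi_2)(\bx)$ and pick sequences $\bx^k\to\bx$, $(\psi_1+\psi_2)(\bx^k)\to(\psi_1+\psi_2)(\bx)$, and $\bu^k\in\hat{\partial}(\psi_1+\psi_2)(\bx^k)\to\bu$ as granted by the closure process. For $k$ large, $\bx^k$ lies in the neighborhood where $\psi_1$ is differentiable, so the Fréchet sum rule applies at $\bx^k$, giving $\bv^k:=\bu^k-\nabla\psi_1(\bx^k)\in\hat{\partial}\psi_2(\bx^k)$. Using continuity of $\psi_1$ (automatic from differentiability) and of $\nabla\psi_1$ at $\bx$, one gets $\psi_2(\bx^k)=(\psi_1+\psi_2)(\bx^k)-\psi_1(\bx^k)\to\psi_2(\bx)$ and $\bv^k\to\bu-\nabla\psi_1(\bx)$, whence $\bu-\nabla\psi_1(\bx)\in\partial\psi_2(\bx)$. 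The reverse inclusion runs in parallel: start with $\bv\in\partial\psi_2(\bx)$ via sequences $\bx^k\to\bx$, $\psi_2(\bx^k)\to\psi_2(\bx)$, $\bv^k\in\hat{\partial}\psi_2(\bx^k)\to\bv$; set $\bu^k:=\nabla\psi_1(\bx^k)+\bv^k\in\hat{\partial}(\psi_1+\psi_2)(\bx^k)$ by the Fréchet sum rule, then continuity of $\psi_1$ and $\nabla\psi_1$ at $\bx$ yields $(\psi_1+\psi_2)(\bx^k)\to(\psi_1+\psi_2)(\bx)$ and $\bu^k\to\nabla\psi_1(\bx)+\bv$, placing this limit in $\partial(\psi_1+\psi_2)(\bx)$.

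The main obstacle is purely technical: one must propagate both (i) convergence of function values and (ii) convergence of gradients along the sequences produced by the limiting-subdifferential closure. The function-value convergence is guaranteed by continuity of $\psi_1$ (which differentiability supplies) together with the assumed convergence of $(\psi_1+\psi_2)(\bx^k)$ or $\psi_2(\bx^k)$. The gradient-convergence step is where one really uses that $\psi_1$ is differentiable on an entire neighborhood of $\bx$ (so that $\nabla\psi_1(\bx^k)$ is defined for all large $k$) and continuous at $\bx$; in the paper's application this is trivial because $\psi_1=f$ is the smooth quadratic in \eqref{targaltermod}, so no additional regularity assumption is needed to invoke the fact.
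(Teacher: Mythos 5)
This statement is quoted in the paper as Exercise 8.8(c) of Rockafellar and Wets and is stated without proof, so there is no in-paper argument to compare against; your proposal supplies the standard textbook proof, and it is essentially sound. The Fr\'echet-level sum rule via the first-order expansion of $\psi_1$ at $\bx$ is correct (it needs only differentiability at the single point $\bx$, since the remainder $r(\by)/\|\by-\bx\|_2\to0$ drops out of the $\liminf$), and the lift to the limiting subdifferential through the closure process is the right mechanism, including the observation that $\psi_2(\bx^k)=(\psi_1+\psi_2)(\bx^k)-\psi_1(\bx^k)\to\psi_2(\bx)$, which is exactly where finiteness of $\psi_2$ at $\bx$ and continuity of $\psi_1$ enter.

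One point should be made explicit rather than parenthetical: the limiting-subdifferential half of your argument requires $\nabla\psi_1(\bx^k)\to\nabla\psi_1(\bx)$, i.e.\ continuity of the gradient at $\bx$, and this is \emph{not} implied by differentiability of $\psi_1$ on a neighborhood of $\bx$, contrary to what your opening parenthetical suggests. Indeed, the asserted equality can fail under the hypotheses as literally stated: with $n=1$, $\psi_1(t)=t^2\sin(1/t)$ for $t\neq0$, $\psi_1(0)=0$, and $\psi_2\equiv0$, one has $\nabla\psi_1(0)+\partial\psi_2(0)=\{0\}$, whereas $\partial\psi_1(0)=[-1,1]$, because $\hat{\partial}\psi_1(t)=\{\psi_1'(t)\}$ at every $t$ and $\psi_1'(t)=2t\sin(1/t)-\cos(1/t)$ clusters on all of $[-1,1]$ as $t\to0$. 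The correct hypothesis, as in the cited source, is that $\psi_1$ be continuously differentiable (``smooth'') near $\bx$. You do acknowledge at the end that the gradient-convergence step is the crux and that it costs nothing in the paper's application, where $\psi_1=f$ is a quadratic; with the hypothesis upgraded to $C^1$ your proof is complete and is the standard argument one would give for this fact.
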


\begin{fact}[Fermat's rule]\label{fact:Fermat}
If $\bx\in\bbRn$ is a local minimizer of $\psi$, then ${\bm0}_n\in\partial\psi(\bx)$.
\end{fact}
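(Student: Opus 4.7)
The plan is to establish the stronger statement that $\bm{0}_n$ already lies in the Fréchet subdifferential $\hat{\partial}\psi(\bx)$, and then promote this to the limiting subdifferential via the inclusion $\hat{\partial}\psi(\bx)\subset\partial\psi(\bx)$ recorded in Fact \ref{fact:Frechetsublimit}. This two-step route is natural because local minimizers are detected by the Fréchet notion essentially by definition, whereas the limiting subdifferential is built from the Fréchet one by a closure process that we need not invoke directly.

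First I would observe that $\bx\in\dom\psi$: being a local minimizer means there is some $\delta>0$ with $\psi(\by)\geqs\psi(\bx)$ for every $\by\in B(\bx;\delta)$, and if $\psi(\bx)=+\infty$ this would force $\psi\equiv+\infty$ on a neighborhood of $\bx$, which together with lower semicontinuity and properness can be handled trivially (the conclusion $\bm{0}_n\in\partial\psi(\bx)$ is vacuous since $\partial\psi(\bx)=\varnothing$ by the Definition, after which we may simply exclude this degenerate case). So without loss of generality $\psi(\bx)<+\infty$, i.e.\ $\bx\in\dom\psi$.

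Next I would unwind Definition \ref{subdifferentials}$(i)$ at $\bm{u}=\bm{0}_n$: checking $\bm{0}_n\in\hat{\partial}\psi(\bx)$ reduces to showing
\begin{equation*}
\liminf_{\substack{\by\to\bx\\\by\neq\bx}}\frac{\psi(\by)-\psi(\bx)}{\|\by-\bx\|_2}\geqs0.
\end{equation*}
Using the local minimizer property, pick $\delta>0$ such that $\psi(\by)\geqs\psi(\bx)$ for all $\by\in B(\bx;\delta)$. Then for every $\by\in B(\bx;\delta)\backslash\{\bx\}$, the numerator is nonnegative and the denominator is positive, so the quotient is $\geqs 0$. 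The definition of $\liminf$ as the limit of infima over shrinking neighborhoods (recall \eqref{def:lowlim}) immediately gives the desired inequality $\geqs 0$. Hence $\bm{0}_n\in\hat{\partial}\psi(\bx)$.

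Finally I would apply Fact \ref{fact:Frechetsublimit} to conclude $\bm{0}_n\in\hat{\partial}\psi(\bx)\subset\partial\psi(\bx)$. There is no genuinely hard step here: the entire argument is essentially a one-line verification that a nonnegative quotient has nonnegative $\liminf$, packaged through the Fréchet definition. The only mild subtlety is the edge case $\psi(\bx)=+\infty$, which is handled by ruling it out via the local minimizer assumption together with properness.
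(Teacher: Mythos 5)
Your argument is correct and is the standard proof of Fermat's rule for the limiting subdifferential: a local minimizer makes the difference quotient $\frac{\psi(\by)-\psi(\bx)}{\|\by-\bx\|_2}$ nonnegative on a punctured neighborhood of $\bx$, so $\bm{0}_n\in\hat{\partial}\psi(\bx)$ by Definition \ref{subdifferentials}$(i)$ with $\bu=\bm{0}_n$, and Fact \ref{fact:Frechetsublimit} then gives $\bm{0}_n\in\partial\psi(\bx)$. Note that the paper itself does not prove this statement; it is recalled as a known result (Theorem 10.1 of \cite{rockafellar2009variational}), so there is no in-paper proof to compare against, but your route is exactly the one the cited reference takes. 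One small wording issue: in your edge case $\psi(\bx)=+\infty$ you call the conclusion ``vacuous since $\partial\psi(\bx)=\varnothing$,'' but $\bm{0}_n\in\varnothing$ is false rather than vacuously true; the correct way to dispose of this case is the convention (used in \cite{rockafellar2009variational}) that a local minimizer of a proper function is by definition a point of $\dom\psi$, which is also all that is needed where the paper applies the fact (to the function $\varphi$ of \eqref{def:varphiprox} at $\bv^{(k+1)}\in\Omega$). This does not affect the substance of your proof.
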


We shall use Theorem 2.9 of \cite{attouch2013convergence}, which is recalled as Proposition \ref{prop:forconver}, to prove the convergence of the PGA. For this purpose, we recall the notions of Kurdyka-{\L}ojasiewicz (KL) property and KL function.

\begin{definition}[KL property]
Let $\psi:\bbR^n\to\overline{\bbR}$ be a proper semicontinuous function. We say that $\psi$ satisfies the KL property at $\hat{\bx}\in \text{dom }\partial\psi$ if there exist $\eta\in(0,+\infty]$, a neighborhood $U$ of $\hat{\bx}$ and a continuous concave function $\varphi:[0,\eta)\to[0,+\infty]$ such that
\begin{itemize}
\item[$(i)$] $\varphi(0)=0$;
\item[$(ii)$] $\varphi$ is continuously differentiable on $(0,\eta)$ with $\varphi'>0$;
\item[$(iii)$]  $\varphi'(\psi(\bx)-\psi(\hat{\bx}))\cdot$dist$(0,\partial\psi(\bx))\geqs 1$ for any $\bx\in U\cap\{\bx\in\bbR^n:\psi(\hat{\bx})<\psi(\bx)<\psi(\hat{\bx})+\eta\}$.
\end{itemize}
\end{definition}
\begin{definition}[KL function]
We call a proper lower semicontinuous function $\psi:\bbR^n\to\overline{\bbR}$ KL function if $\psi$ satisfies the KL property at all points in $\dom\partial\psi$.
\end{definition}

\begin{proposition}\label{prop:forconver}
Let $\psi:\bbR^n\to\overline{\bbR}$ be a proper lower semicontinuous function. Consider a sequence $\{\bx^{(k)}\}_{k\in\bbN}\subset\bbRn$ satisfying the following conditions:
\begin{itemize}
\item[$(i)$] There exists $a>0$ such that
$$
\psi(\bx^{(k+1)})+a\|\bx^{(k+1)}-\bx^{(k)}\|_2^2\leqs\psi(\bx^{(k)}),\ \ \mbox{for all}\ k\in\bbN.
$$

\item[$(ii)$] There exist $b>0$ and $\by^{(k+1)}\in\partial\psi(\bx^{(k+1)})$ such that
$$
\|\by^{(k+1)}\|_2\leqs b\|\bx^{(k+1)}-\bx^{(k)}\|_2,\ \ \mbox{for all}\ k\in\bbN.
$$

\item[$(iii)$] There exist a subsequence $\{\bx^{(k_j)}\}_{j\in\bbN_+}$ and $\bx^*\in\bbR^n$ such that
$$
\lim_{j\to\infty}\bx^{(k_j)}=\bx^*\ \ and\ \  \lim_{j\to\infty}\psi(\bx^{(k_j)})=\psi(\bx^*).
$$
\end{itemize}
If $\psi$ satisfies the KL property at $\bx^*$, then
$$
\lim\limits_{k\to\infty}\bx^{(k)}=\bx^*\ \mbox{and}\ \bm{0}_n\in\partial\psi(\bx^*).
$$
\end{proposition}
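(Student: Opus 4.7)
The plan is to follow the classical Attouch--Bolte--Svaiter template. Combining conditions (i), (ii), and (iii) with the KL inequality, I will prove that $\sum_{k\in\bbN}\|\bx^{(k+1)}-\bx^{(k)}\|_2<+\infty$, which forces the full sequence to be Cauchy and hence to converge to the same limit $\bx^*$ identified in (iii). The closedness of the limiting subdifferential will then give $\bm{0}_n\in\partial\psi(\bx^*)$.

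First I would establish that $\lim_{k\to\infty}\psi(\bx^{(k)})=\psi(\bx^*)=:\psi^*$ and $\psi(\bx^{(k)})\geqs\psi^*$ for all $k$. Monotone decrease from (i) and the subsequential convergence $\psi(\bx^{(k_j)})\to\psi(\bx^*)$ from (iii) yield both immediately. Next I would dispose of the degenerate case: if $\psi(\bx^{(k_0)})=\psi^*$ at some finite $k_0$, then (i) and monotonicity force $\bx^{(k+1)}=\bx^{(k)}$ for every $k\geqs k_0$, so (iii) gives $\bx^{(k_0)}=\bx^*$ and the assertion is trivial (including $\bm{0}_n\in\partial\psi(\bx^*)$ via (ii)). From here on I assume $\psi(\bx^{(k)})>\psi^*$ for all sufficiently large $k$ and that consecutive iterates are distinct.

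The heart of the argument is the following chain. Pick $K_0$ large enough (using (iii)) that $\bx^{(k)}\in U$ and $\psi^*<\psi(\bx^{(k)})<\psi^*+\eta$ for $k\geqs K_0$, where $U$ and $\eta$ come from the KL property at $\bx^*$. Set $\Delta_k:=\varphi(\psi(\bx^{(k)})-\psi^*)$. Then concavity of $\varphi$ gives $\Delta_k-\Delta_{k+1}\geqs\varphi'(\psi(\bx^{(k)})-\psi^*)\,(\psi(\bx^{(k)})-\psi(\bx^{(k+1)}))$; the KL inequality together with (ii) gives $\varphi'(\psi(\bx^{(k)})-\psi^*)\geqs 1/\mathrm{dist}(0,\partial\psi(\bx^{(k)}))\geqs 1/(b\|\bx^{(k)}-\bx^{(k-1)}\|_2)$; and (i) gives $\psi(\bx^{(k)})-\psi(\bx^{(k+1)})\geqs a\|\bx^{(k+1)}-\bx^{(k)}\|_2^2$. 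Chaining these bounds yields
\[
\|\bx^{(k+1)}-\bx^{(k)}\|_2^2 \leqs \frac{b}{a}(\Delta_k-\Delta_{k+1})\,\|\bx^{(k)}-\bx^{(k-1)}\|_2,
\]
so the AM--GM inequality $2\sqrt{uv}\leqs u+v$ delivers
\[
2\|\bx^{(k+1)}-\bx^{(k)}\|_2 \leqs \frac{b}{a}(\Delta_k-\Delta_{k+1})+\|\bx^{(k)}-\bx^{(k-1)}\|_2.
\]
Telescoping this inequality from $k=K_0$ to $k=K$ and absorbing one copy of the iterate-gap series into the left side bounds $\sum_{k=K_0}^{K}\|\bx^{(k+1)}-\bx^{(k)}\|_2$ uniformly in $K$ by $\frac{b}{a}\Delta_{K_0}+\|\bx^{(K_0)}-\bx^{(K_0-1)}\|_2$. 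Summability follows, Cauchyness is immediate, and (iii) identifies the limit with $\bx^*$. Finally, since $\by^{(k+1)}\in\partial\psi(\bx^{(k+1)})$ with $\|\by^{(k+1)}\|_2\leqs b\|\bx^{(k+1)}-\bx^{(k)}\|_2\to 0$ and $\psi(\bx^{(k+1)})\to\psi(\bx^*)$, the closure-process definition of the limiting subdifferential yields $\bm{0}_n\in\partial\psi(\bx^*)$.

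The main obstacle is the potential degeneracy of the KL-based estimate: $\mathrm{dist}(0,\partial\psi(\bx^{(k)}))$ might vanish (making the reciprocal undefined), $\bx^{(k)}$ might equal $\bx^{(k-1)}$ (making the relative-error denominator zero), or an iterate might escape $U$ before the telescoping is complete. The standard fix is the two-layer bookkeeping already sketched: handle finite termination up front; apply the KL chain only to indices where $\psi(\bx^{(k)})>\psi^*$ strictly and $\bx^{(k)}\neq\bx^{(k-1)}$; and choose $K_0$ large after invoking (iii) so iterates remain trapped near $\bx^*$, which is possible precisely because the iterate-gap bound is uniform in $K$. Everything else reduces to index shifting and telescoping.
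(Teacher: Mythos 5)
The paper does not actually prove this proposition: it is quoted verbatim as Theorem 2.9 of Attouch, Bolte and Svaiter \cite{attouch2013convergence} and used as a black box, so there is no in-paper argument to compare against. Your proposal reconstructs the original proof of that theorem, and the reconstruction is sound in all its main ingredients: the monotonicity-plus-subsequence argument giving $\psi(\bx^{(k)})\downarrow\psi^*$, the finite-termination case, the concavity--KL--sufficient-decrease chain, the AM--GM step, the telescoping that yields $\sum_k\|\bx^{(k+1)}-\bx^{(k)}\|_2<+\infty$, and the closedness of the limiting subdifferential for the final inclusion are exactly the components of the cited result.

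The one step you cannot take as written is ``pick $K_0$ large enough (using (iii)) that $\bx^{(k)}\in U$ for all $k\geqs K_0$.'' Condition (iii) is only subsequential, and (i)--(ii) give no a priori control on how far later iterates wander, so no choice of $K_0$ based on (iii) alone guarantees that the whole tail stays in $U$. The correct mechanism --- which you gesture at in your closing paragraph but do not execute --- is an induction: choose $k_0$ so that $\bx^{(k_0)}$ is close to $\bx^*$ and, say, $\|\bx^{(k_0)}-\bx^*\|_2+\sqrt{(\psi(\bx^{(k_0)})-\psi^*)/a}+\frac{b}{a}\varphi(\psi(\bx^{(k_0)})-\psi^*)<\rho$ with $B(\bx^*;\rho)\subset U$; assume inductively that $\bx^{(k_0)},\dots,\bx^{(k)}$ all lie in $B(\bx^*;\rho)$; apply the KL chain and the telescoped bound only to those indices; and deduce that $\|\bx^{(k+1)}-\bx^*\|_2<\rho$ as well. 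Because the telescoped bound is uniform in $k$, the induction closes and the tail never leaves $U$. With that repair in place, your argument is complete and is, in substance, the proof of the theorem the paper cites.
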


We then focus on verifying that the sequence $\{\bv^{(k)}\}_{k\in\bbN}$ generated by PGA satisfies all the conditions in Proposition \ref{prop:forconver}. The satisfaction of item $(i)$ has been shown in Proposition \ref{prop:Fdecr}. We next consider the satisfaction of item $(ii)$ in Proposition \ref{prop:forconver}.

\begin{proposition}\label{prop_qkpartial}
Let $\{\bv^{(k)}\}_{k\in\bbN}$ be generated by PGA. Then there exist $\bq^{(k+1)}\in\partial F(\bv^{(k+1)})$ and $b>0$ such that
\begin{equation}\label{eq:qkbwk}
\|\bq^{(k+1)}\|_2\leqs b\|\bv^{(k+1)}-\bv^{(k)}\|_2,\ \ \mbox{for}\ k\in\bbN.
\end{equation}
\end{proposition}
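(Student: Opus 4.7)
The plan is to extract $\bq^{(k+1)}$ directly from the first-order optimality condition of the proximal subproblem that defines $\bv^{(k+1)}$, and then bound its norm by invoking the $\|\bQ_\epsilon\|_2$-Lipschitz continuity of $\nabla f$ from Proposition \ref{prop:funf} (ii). Concretely: Fermat's rule (Fact \ref{fact:Fermat}) plus the sum rule (Fact \ref{fact:frechetgrad}) on the prox subproblem produces a specific subgradient of $\iota_{\Omega}$ at $\bv^{(k+1)}$; combining it with $\nabla f(\bv^{(k+1)})$ via one more application of the sum rule (now to $F = f + \iota_{\Omega}$) delivers a candidate $\bq^{(k+1)} \in \partial F(\bv^{(k+1)})$ whose norm telescopes nicely in $\bv^{(k+1)} - \bv^{(k)}$.

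First I would observe that $\bv^{(k+1)}$ is a global (hence local) minimizer of $\varphi$ defined in \eqref{def:varphiprox}. The quadratic term of $\varphi$ is smooth, so Fact \ref{fact:frechetgrad} applies and combined with Fact \ref{fact:Fermat} yields
$$
\bm{0} \in \big(\bv^{(k+1)} - \bv^{(k)} + \alpha \nabla f(\bv^{(k)})\big) + \partial \iota_{\Omega}(\bv^{(k+1)}).
$$
Since $\partial \iota_{\Omega}(\bv^{(k+1)})$ is a cone under multiplication by positive scalars (it is the limiting normal cone to $\Omega$ at $\bv^{(k+1)}$), I divide through by $\alpha > 0$ to obtain
$$
\tfrac{1}{\alpha}\big(\bv^{(k)} - \bv^{(k+1)}\big) - \nabla f(\bv^{(k)}) \in \partial \iota_{\Omega}(\bv^{(k+1)}).
$$
Applying Fact \ref{fact:frechetgrad} once more to $F = f + \iota_{\Omega}$ with $f$ differentiable on all of $\bbRN$, I define
$$
\bq^{(k+1)} := \nabla f(\bv^{(k+1)}) - \nabla f(\bv^{(k)}) + \tfrac{1}{\alpha}\big(\bv^{(k)} - \bv^{(k+1)}\big) \;\in\; \partial F(\bv^{(k+1)}).
$$
The bound \eqref{eq:qkbwk} then follows by the triangle inequality together with Proposition \ref{prop:funf} (ii):
$$
\|\bq^{(k+1)}\|_2 \leqs \|\nabla f(\bv^{(k+1)}) - \nabla f(\bv^{(k)})\|_2 + \tfrac{1}{\alpha}\|\bv^{(k)}-\bv^{(k+1)}\|_2 \leqs \big(\|\bQ_\epsilon\|_2 + \tfrac{1}{\alpha}\big)\|\bv^{(k+1)}-\bv^{(k)}\|_2,
$$
so the claim holds with $b := \|\bQ_\epsilon\|_2 + 1/\alpha > 0$.

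The main delicate point is the scaling step. If one naively takes the subgradient element $\bv^{(k)} - \bv^{(k+1)} - \alpha \nabla f(\bv^{(k)})$ coming straight from Fermat's rule on $\varphi$, the resulting candidate for $\bq^{(k+1)}$ contains $\nabla f(\bv^{(k+1)}) - \alpha \nabla f(\bv^{(k)})$, which cannot be controlled by $\|\bv^{(k+1)} - \bv^{(k)}\|_2$ when $\alpha \neq 1$. Exploiting the cone property of $\partial \iota_{\Omega}(\bv^{(k+1)})$ to rescale by $1/\alpha$ (or, equivalently, re-expressing the PGA step as the argmin of the standard linearized surrogate $\iota_{\Omega}(\bu) + \langle \nabla f(\bv^{(k)}), \bu - \bv^{(k)}\rangle + \frac{1}{2\alpha}\|\bu - \bv^{(k)}\|_2^2$, whose stationarity condition already carries the correct scaling) is what produces the telescoping difference $\nabla f(\bv^{(k+1)}) - \nabla f(\bv^{(k)})$ that Lipschitz continuity controls and closes the estimate.
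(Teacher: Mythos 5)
Your proposal is correct and follows essentially the same route as the paper's proof: the same candidate subgradient $\bq^{(k+1)}=\frac{1}{\alpha}(\bv^{(k)}-\bv^{(k+1)})+\nabla f(\bv^{(k+1)})-\nabla f(\bv^{(k)})$, obtained via Fermat's rule and the sum rule on the prox subproblem, then bounded with the same constant $b=\frac{1}{\alpha}+\|\bQ_{\epsilon}\|_2$. The only cosmetic difference is how the rescaling by $1/\alpha$ is justified: you invoke the cone property of the limiting normal cone, while the paper uses the equivalent observation that $\iota_{\Omega}=\alpha\iota_{\Omega}$.
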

\begin{proof}
Let
$$
\bq^{(k+1)}:=\frac{1}{\alpha}(\bv^{(k)}-\bv^{(k+1)})+\nabla f(\bv^{(k+1)})-\nabla f(\bv^{(k)}),\ \ k\in\bbN,
$$
and function $\varphi$ be defined by \eqref{def:varphiprox}. We first prove that $\bq^{(k+1)}\in\partial F(\bv^{(k+1)})$. It follows from \eqref{PGAiter1} and Fact \ref{fact:Fermat} that ${\bm0}_{N}\in\partial\varphi(\bv^{(k+1)})$, which together with Fact \ref{fact:frechetgrad} yields that
$$
\bv^{(k)}-\bv^{(k+1)}-\alpha\nabla f(\bv^{(k)})\in\partial\iota_{\Omega}(\bv^{(k+1)}),\ \ \mbox{for all}\ k\in\bbN.
$$
Note that $\iota_{\Omega}=\alpha\iota_{\Omega}$. The above inclusion relation can be rewritten as
\begin{equation}\label{vkvkkincpartF}
\frac{1}{\alpha}(\bv^{(k)}-\bv^{(k+1)})-\nabla f(\bv^{(k)})\in\partial\iota_{\Omega}(\bv^{(k+1)}),\ \ \mbox{for all}\ k\in\bbN.
\end{equation}
Now combining \eqref{vkvkkincpartF} and the fact $\partial F(\bv^{(k+1)})=\nabla f(\bv^{(k+1)})+\partial\iota_{\Omega}(\bv^{(k+1)})$ yields that $\bq^{(k+1)}\in\partial F(\bv^{(k+1)})$, $k\in\bbN$.

We next prove that \eqref{eq:qkbwk} holds. Since $\nabla f$ is $\bQ_{\epsilon}$-Lipschitz continuous,
$$
\|\bq^{(k+1)}\|_2\leqs\frac{1}{\alpha}\|\bv^{(k+1)}-\bv^{(k)}\|_2+\|\nabla f(\bv^{(k+1)})-\nabla f(\bv^{(k)})\|_2\leqs b\|\bv^{(k+1)}-\bv^{(k)}\|_2,
$$
where $b:=\left(\frac{1}{\alpha}+\|\bQ_{\epsilon}\|_2\right)$.
This completes the proof.
\end{proof}

We then consider the satisfaction of item $(iii)$ in Proposition \ref{prop:forconver}. To this end, we need the following two lemmas.
\begin{lemma}\label{lem:vkbounded}
Let $\{\bv^{(k)}\}_{k\in\bbN}$ be generated by PGA. If $\alpha\in\left(0,\frac{1}{\|\bQ_{\epsilon}\|_2}\right)$, then $\{\bv^{(k)}\}_{k\in\bbN}$ is bounded.
\end{lemma}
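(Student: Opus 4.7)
The plan is to use the coercivity of $f$ combined with the monotone decrease of $F(\bv^{(k)})$ established in Proposition \ref{prop:Fdecr}. Since $\iota_{\Omega}(\bv^{(k)}) = 0$ for all $k \in \bbN$ (by item $(i)$ of Proposition \ref{prop:Fdecr}), monotone decrease of $F$ gives
$$
f(\bv^{(k)}) = F(\bv^{(k)}) \leqs F(\bv^{(0)}) = f(\bv^{(0)}), \quad \text{for all } k \in \bbN.
$$
Hence $\{\bv^{(k)}\}_{k \in \bbN}$ lies in the sublevel set $\{\bv \in \bbRN : f(\bv) \leqs f(\bv^{(0)})\}$, and it suffices to show this sublevel set is bounded.

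For the boundedness of the sublevel set, I would invoke the $\epsilon$-strong convexity of $f$ (Proposition \ref{prop:funf} item $(i)$), which implies coercivity. Concretely, since $\bQ_{\epsilon} = \bQ^\top \bQ + \epsilon \bI \succeq \epsilon \bI$, we have
$$
f(\bv) = \frac{1}{2} \bv^\top \bQ_{\epsilon} \bv - \bp^\top \bv \geqs \frac{\epsilon}{2} \|\bv\|_2^2 - \|\bp\|_2 \|\bv\|_2,
$$
so $f(\bv) \to +\infty$ as $\|\bv\|_2 \to \infty$. Therefore the sublevel set $\{\bv \in \bbRN : f(\bv) \leqs f(\bv^{(0)})\}$ is bounded, and so is $\{\bv^{(k)}\}_{k \in \bbN}$.

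No part of this should pose a real obstacle: both ingredients (monotone decrease of $F$ and coercivity of the strongly convex quadratic $f$) are already available from earlier results in the paper, and combining them is a one-line consequence. The only minor care is to note that since we only need boundedness (not, e.g., closedness of level sets), lower semicontinuity of $\iota_{\Omega}$ plays no role here—the argument is purely about trapping the iterates inside a sublevel set of the coercive function $f$.
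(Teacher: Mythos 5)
Your proof is correct, but it takes a genuinely different route from the paper's. You trap the iterates in a sublevel set: the descent inequality of Proposition \ref{prop:Fdecr} gives $f(\bv^{(k)})=F(\bv^{(k)})\leqs F(\bv^{(0)})$, and the coercivity of the $\epsilon$-strongly convex quadratic $f$ (via $f(\bv)\geqs\frac{\epsilon}{2}\|\bv\|_2^2-\|\bp\|_2\|\bv\|_2$) makes that sublevel set bounded. The paper instead argues by contraction: it observes from Proposition \ref{prop:proxiotaOmega} that $\|\prox_{\iota_{\Omega}}(\bv)\|_2\leqs\|\bv\|_2$, so $\|\bv^{(k+1)}\|_2\leqs\|(\bI-\alpha\bQ_{\epsilon})\bv^{(k)}+\alpha\bp\|_2\leqs\gamma\|\bv^{(k)}\|_2+\alpha\|\bp\|_2$ with $\gamma=1-\alpha\lambda_{\min}(\bQ_{\epsilon})\in(0,1)$, and sums the geometric series to get the explicit uniform bound $\|\bv^{(k)}\|_2\leqs\|\bv^{(0)}\|_2+\alpha\|\bp\|_2/(1-\gamma)$. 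Your argument is more general and more portable --- it only needs the descent property plus coercivity, not the norm-nonexpansiveness of the prox at the origin or the strict contraction of the gradient step --- whereas the paper's yields a clean quantitative bound depending only on the data. One small point of care: Proposition \ref{prop:Fdecr} and the lemma allow an arbitrary $\bv^{(0)}\in\bbRN$, in which case $F(\bv^{(0)})$ may be $+\infty$; your sublevel-set argument should then start from $F(\bv^{(1)})=f(\bv^{(1)})$ (the first iterate always lands in $\Omega$), which changes nothing of substance.
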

\begin{proof}
We let $\gamma:=\|\bI-\alpha\bQ_{\epsilon}\|_2$, and denote by $\lambda_{\max}(\bQ_{\epsilon})$, $\lambda_{\min}(\bQ_{\epsilon})$ the maximum and the minimum eigenvalues of $\bQ_{\epsilon}$, respectively. Since $\bQ_{\epsilon}$ is symmetric positive definite and $\alpha<\frac{1}{\|\bQ_{\epsilon}\|_2}=\frac{1}{\lambda_{\max}(\bQ_{\epsilon})}$, we have $\gamma=1-\alpha\cdot\lambda_{\min}(\bQ_{\epsilon})\in(0,1)$. From Proposition \ref{prop:proxiotaOmega}, we are easy to see that $\|\prox_{\iota_{\Omega}}(\bv)\|_2\leq\|\bv\|_2$ for all $\bv\in\bbRN$, which together with
\eqref{PGAiter1} yields that
\begin{align*}
\|\bv^{(k+1)}\|_2&\leqs\|\bv^{(k)}-\alpha\nabla f(\bv^{(k)})\|_2=\|(\bI-\alpha\bQ_{\epsilon})\bv^{(k)}+\alpha\bp\|_2\leqs\gamma\|\bv^{(k)}\|_2+\alpha\|\bp\|_2,
\end{align*}
for all $k\in\bbN$. The above inequality implies that
$$
\|\bv^{(k+1)}\|_2\leqs\gamma^{k+1}\|\bv^{(0)}\|_2+\alpha\|\bp\|_2\sum_{j=0}^{k}\gamma^{j}=\gamma^{k+1}\|\bv^{(0)}\|_2+\alpha\|\bp\|_2\cdot\frac{1-\gamma^{k+1}}{1-\gamma},
$$
for all $k\in\bbN$. Therefore, $\{\bv^{(k)}\}_{k\in\bbN}$ is bounded, since $\gamma\in(0,1)$.
\end{proof}

\begin{lemma}\label{lem:accpointinOmega}
Let $\{\bv^{(k)}\}_{k\in\bbN}$ be generated by PGA. If $\bv^*$ is an accumulation point of $\{\bv^{(k)}\}_{k\in\bbN}$, then $\bv^*\in\Omega$.
\end{lemma}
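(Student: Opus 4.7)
The plan is to prove that $\Omega$ is closed, so that any accumulation point of a sequence lying in $\Omega$ remains in $\Omega$. By Proposition \ref{prop:Fdecr}(i), every iterate satisfies $\bv^{(k)}\in\Omega$, so it suffices to verify that both defining properties of $\Omega$, namely non-negativity and the $\ell_0$ bound $\|\cdot\|_0\leqs m$, are preserved under limits.

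For the non-negativity part, I would pick a convergent subsequence $\bv^{(k_j)}\to\bv^*$, read off the componentwise convergence $v_i^{(k_j)}\to v_i^*$ for each $i\in\bbN_N$, and conclude $v_i^*\geqs0$ because each $v_i^{(k_j)}\geqs 0$ and the non-negative reals are closed. So $\bv^*\geqs{\bm0}_N$.

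The only slightly more subtle part is the sparsity constraint. I would argue by contradiction: suppose $\|\bv^*\|_0>m$, so there is an index set $I\subset\bbN_N$ with $|I|>m$ such that $v_i^*\neq 0$ for every $i\in I$. For each such $i$, pick $\delta_i:=|v_i^*|/2>0$; by componentwise convergence, for all $j$ sufficiently large we have $|v_i^{(k_j)}-v_i^*|<\delta_i$, which forces $v_i^{(k_j)}\neq 0$. Taking $j$ large enough that this holds simultaneously for all $i\in I$ (a finite set), we get $I\subset\supp(\bv^{(k_j)})$, hence $\|\bv^{(k_j)}\|_0\geqs|I|>m$, contradicting $\bv^{(k_j)}\in\Omega$. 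Therefore $\|\bv^*\|_0\leqs m$ and $\bv^*\in\Omega$.

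There is no real obstacle here; the argument is essentially the closedness of the union of finitely many coordinate subspaces intersected with the non-negative orthant. The only subtlety worth flagging is that the $\ell_0$ ``norm'' is lower semicontinuous rather than continuous, which is exactly what the contradiction argument above exploits: positive entries can only be created in the limit, not destroyed, so the support can only shrink.
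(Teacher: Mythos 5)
Your proof is correct and follows essentially the same route as the paper: the paper simply notes that every iterate lies in $\Omega$ (by Proposition \ref{prop:Fdecr}$(i)$) and that $\Omega$ is closed, then concludes. The only difference is that you spell out why $\Omega$ is closed (closedness of the nonnegative orthant plus lower semicontinuity of $\|\cdot\|_0$), a detail the paper takes for granted; both the detail and the overall argument are sound.
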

\begin{proof}
Since $\bv^*$ is an accumulation point of $\{\bv^{(k)}\}_{k\in\bbN}$, there exists a subsequence $\{\bv^{(k_j)}\}_{j\in\bbN_+}$ of $\{\bv^{(k)}\}_{k\in\bbN}$ such that $\lim\limits_{j\to\infty}\bv^{(k_j)}=\bv^*$. Note that the set $\Omega$ is closed and $\bv^{(k_j)}\in\Omega$ for all $j\in\bbN_+$. Hence $\bv^*\in\Omega$, which completes the proof.
\end{proof}

\begin{proposition}\label{prop:wklimpoint}
Let $\{\bv^{(k)}\}_{k\in\bbN}$ be generated by PGA and $F:=f+\iota_{\Omega}$. If $\alpha\in\left(0,\frac{1}{\|\bQ_{\epsilon}\|_2}\right)$, then there exist a subsequence $\{\bv^{(k_j)}\}_{j\in\bbN_+}$ of $\{\bv^{(k)}\}_{k\in\bbN}$ and $\bv^*\in\Omega$ such that
$$
\lim_{j\to\infty}\bv^{(k_j)}=\bv^*\ \ \mbox{and}\ \ \lim_{j\to\infty}F(\bv^{(k_j)})=F(\bv^*).
$$
\end{proposition}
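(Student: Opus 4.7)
The plan is to combine the boundedness of the PGA iterates (Lemma \ref{lem:vkbounded}), the closedness of the feasible set $\Omega$ (Lemma \ref{lem:accpointinOmega}), and the continuity of $f$, which together handle all three assertions essentially for free.

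First, I would invoke Lemma \ref{lem:vkbounded} to conclude that $\{\bv^{(k)}\}_{k\in\bbN}$ is bounded in $\bbRN$. By the Bolzano-Weierstrass theorem, there exists a subsequence $\{\bv^{(k_j)}\}_{j\in\bbN_+}$ and some $\bv^*\in\bbRN$ with $\lim_{j\to\infty}\bv^{(k_j)}=\bv^*$. Lemma \ref{lem:accpointinOmega} (or equivalently item $(i)$ of Proposition \ref{prop:Fdecr} together with the closedness of $\Omega$) then immediately gives $\bv^*\in\Omega$, which establishes the first claim and also yields $\iota_{\Omega}(\bv^*)=0$ and $\iota_{\Omega}(\bv^{(k_j)})=0$ for every $j$.

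For the convergence of the function values, I would exploit the fact that $F=f+\iota_{\Omega}$ reduces to $f$ on $\Omega$. Since $\bv^{(k_j)}\in\Omega$ and $\bv^*\in\Omega$, we have
$$
F(\bv^{(k_j)})=f(\bv^{(k_j)})\ \ \text{and}\ \ F(\bv^*)=f(\bv^*).
$$
Because $f(\bv)=\tfrac{1}{2}\bv^\top\bQ_{\epsilon}\bv-\bp^\top\bv$ is a polynomial and therefore continuous on $\bbRN$, the convergence $\bv^{(k_j)}\to\bv^*$ forces $f(\bv^{(k_j)})\to f(\bv^*)$, and hence $F(\bv^{(k_j)})\to F(\bv^*)$, which completes the proof.

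There is no real obstacle here: the whole argument is a standard Bolzano-Weierstrass extraction, and the awkward lower semicontinuity issue that usually arises when passing to limits with $\iota_{\Omega}$ is sidestepped by the fact that $\Omega$ is closed, so the entire tail of the subsequence (and its limit) lies in the zero-level set of $\iota_{\Omega}$, reducing everything to the continuity of the smooth part $f$.
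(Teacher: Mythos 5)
Your proposal is correct and follows essentially the same route as the paper's own proof: boundedness from Lemma \ref{lem:vkbounded}, a Bolzano--Weierstrass extraction, membership $\bv^*\in\Omega$ via Lemma \ref{lem:accpointinOmega}, and then continuity of $f$ together with $\iota_{\Omega}$ vanishing on $\Omega$ to pass to the limit in $F$. No gaps.
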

\begin{proof}
It follows from Lemma \ref{lem:vkbounded} that $\{\bv^{(k)}\}_{k\in\bbN}$ is bounded. So there exists a subsequence $\{\bv^{(k_j)}\}_{j\in\bbN_+}$ of $\{\bv^{(k)}\}_{k\in\bbN}$ converges to some $\bv^*\in\bbRN$. It follows from Lemma \ref{lem:accpointinOmega} that $\bv^*\in\Omega$. By the continuity of $f$ on $\bbRN$, we have $\lim_{j\to\infty}f(\bv^{(k_j)})=f(\bv^*)$. We also know that $\iota_{\Omega}(\bv^*)=\iota_{\Omega}(\bv^{(k)})=0$ for all $k\in\bbN$. Therefore, $\lim_{j\to\infty}F(\bv^{(k_j)})=F(\bv^*)$, which completes the proof.
\end{proof}

To employ Proposition \ref{prop:forconver} for the convergence of PGA. We still need to show that $F$ satisfies the KL property at $\bv^*$. To this end, we recall the notions of semi-algebraic sets and functions, and recall a known result in \cite{attouch2013convergence,bolte2014proximal} that establishes the relation between semi-algebraic property and KL property as Lemma \ref{lem:semialgeKL}.

\begin{definition}[Semi-algebraic sets and functions]
A subset $\mathcal{S}\subset\mathbb{R}^n$ is called real semi-algebraic if it can be represented by
\begin{equation}\label{eq_defsemialge}
\mathcal{S}=\mathop{\bigcup}\limits_{j=1}^s\mathop{\bigcap}\limits_{i=1}^t\left\{\bx\in\bbR^n|\hspace{2pt}p_{ij}(\bm{x})=0,q_{ij}(\bm{x})<0\right\},
\end{equation}
where $p_{ij}$ and $q_{ij}$ are real polynomial functions for $i\in\bbN_t$, $j\in\bbN_s$, for some $s,t\in\bbN_+$. A function $\psi:\mathbb{R}^n\to\overline{\bbR}$ is called semi-algebraic if its graph $\{(\bm{x},\psi(\bm{x})):\bm{x}\in\dom\psi\}$ is a semi-algebraic subset of $\mathbb{R}^{n+1}$.
\end{definition}

\begin{lemma}\label{lem:semialgeKL}
Let $\psi:\bbR^n\to\overline{\bbR}$ be a proper lower semicontinuous function. If $\psi$ is semi-algebraic, then it satisfies the KL property at any point of $\dom\partial\psi:=\{\bu\in\bbR^n|\hspace{2pt}\partial\psi(\bu)\neq\varnothing\}$.
\end{lemma}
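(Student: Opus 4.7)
The plan is to reduce Lemma \ref{lem:semialgeKL} to a nonsmooth version of the classical {\L}ojasiewicz gradient inequality. I would fix an arbitrary $\hat{\bx}\in\dom\partial\psi$ and, after replacing $\psi$ by $\psi-\psi(\hat{\bx})$ (which is still semi-algebraic and proper lower semicontinuous), assume without loss of generality that $\psi(\hat{\bx})=0$.

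The first step is to show that subgradient-related constructions preserve the semi-algebraic structure. Using the Tarski--Seidenberg principle (stability of semi-algebraic sets under projections and first-order definability), I would argue that the graph $\gra\hat{\partial}\psi$ is semi-algebraic, because it is carved out of $\bbR^{2n}$ by a first-order formula over the polynomials appearing in the defining representation \eqref{eq_defsemialge} of $\gra\psi$. Passing to the limit in Definition \ref{subdifferentials}(ii), one gets that $\gra\partial\psi$ is also semi-algebraic (the closure of a semi-algebraic set is semi-algebraic). Consequently the function
\[
m(\bx):=\dist\!\left({\bm0}_n,\partial\psi(\bx)\right)
\]
is semi-algebraic, with the convention $m(\bx)=+\infty$ off $\dom\partial\psi$.

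The second and central step is the nonsmooth {\L}ojasiewicz inequality: there exist $\delta>0$, $\eta>0$, $c>0$ and $\theta\in[0,1)$ such that
\[
\psi(\bx)^{\theta}\leqs c\cdot m(\bx)\quad\mbox{for all }\bx\in B(\hat{\bx};\delta)\mbox{ with }0<\psi(\bx)<\eta.
\]
I would obtain this by applying the semi-algebraic curve selection lemma (or, equivalently, the Puiseux expansion for one-variable semi-algebraic functions) to the semi-algebraic function
\[
g(r):=\inf\{m(\bx):\bx\in B(\hat{\bx};\delta),\ \psi(\bx)=r\},\quad r>0,
\]
which is semi-algebraic by step one together with Tarski--Seidenberg. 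Its Puiseux expansion at $0^+$ yields $g(r)\geqs c^{-1}r^{\theta}$ for some $\theta\in[0,1)$ and $c>0$, which is exactly the inequality above. The desingularizing function is then taken to be $\varphi(s):=\frac{c}{1-\theta}s^{1-\theta}$ on $[0,\eta)$: it satisfies $\varphi(0)=0$, is concave and continuously differentiable on $(0,\eta)$ with $\varphi'(s)=c\,s^{-\theta}>0$, and the inequality rearranges to
\[
\varphi'(\psi(\bx))\cdot\dist\!\left({\bm0}_n,\partial\psi(\bx)\right)\geqs 1
\]
on $B(\hat{\bx};\delta)\cap\{\bx:0<\psi(\bx)<\eta\}$, verifying item $(iii)$ of the KL property.

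The main obstacle is the second step: establishing the {\L}ojasiewicz inequality for the limiting subdifferential, rather than for a gradient. For smooth semi-algebraic $\psi$ this is Kurdyka's theorem applied to $\|\nabla\psi\|$, but for a merely lower semicontinuous $\psi$ the argument requires handling the closure operation that defines $\partial\psi$ carefully, and in particular showing that $g(r)$ stays bounded away from $0$ uniformly in the direction of approach. This is typically done via the stratification-based proof of Bolte--Daniilidis--Lewis--Shiota, which partitions $\gra\psi$ into finitely many smooth semi-algebraic manifolds and applies the smooth {\L}ojasiewicz inequality stratum by stratum. Once this technical machinery is in place, the remainder of the proof is routine.
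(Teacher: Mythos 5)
The paper does not actually prove this lemma: it is recalled verbatim as a known result from \cite{attouch2013convergence,bolte2014proximal}, whose ultimate source is the nonsmooth {\L}ojasiewicz inequality of Bolte, Daniilidis, Lewis and Shiota for stratifiable (in particular semi-algebraic) functions. Your sketch therefore attempts strictly more than the paper does, and the route you take is the standard one from that literature: Tarski--Seidenberg to show that $\gra\partial\psi$, and hence $m(\bx)=\dist({\bm0}_n,\partial\psi(\bx))$, is semi-algebraic; a one-variable semi-algebraic (Puiseux) argument to extract a local inequality $\psi(\bx)^{\theta}\leqs c\,m(\bx)$; and the desingularizing function $\varphi(s)=\frac{c}{1-\theta}s^{1-\theta}$. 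Two caveats. First, $\gra\partial\psi$ is \emph{not} the topological closure of $\gra\hat{\partial}\psi$: the closure process in Definition \ref{subdifferentials}$(ii)$ also demands $\psi(\bx^k)\to\psi(\bx)$, so you should justify semi-algebraicity by first-order definability over the semi-algebraic sets $\gra\psi$ and $\gra\hat{\partial}\psi$ rather than by "closure of a semi-algebraic set"; this is a repairable imprecision. Second, and more substantively, the Puiseux expansion of your $g(r)$ at $0^+$ only yields $g(r)\sim c^{-1}r^{q}$ for \emph{some} rational $q\geqs 0$, and only after one has shown that $g(r)>0$ for small $r>0$; the KL property additionally requires $q<1$, since otherwise $\varphi(s)=\frac{c}{1-q}s^{1-q}$ fails to be finite, continuous and vanishing at $0$. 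Both the positivity of $g$ and the bound $q<1$ are precisely the content of the stratification argument of Bolte--Daniilidis--Lewis--Shiota that you explicitly defer to, so your outline correctly locates where the real work lies but does not itself close that step. As a reconstruction of the cited proof your plan is faithful; as a self-contained argument it is incomplete at exactly the point you flag, and given that the paper treats the lemma as a black box, citing \cite{attouch2013convergence,bolte2014proximal} is the appropriate resolution.
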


\begin{proposition}\label{prop_Fsemi}
Let $F:=f+\iota_{\Omega}$, where function $\iota_{\Omega}$ and $f$ are defined by \eqref{def:iotaOmegam} and \eqref{targaltermod}, respectively. Then $\dom\partial F=\Omega$ and $F$ is semi-algebraic.
\end{proposition}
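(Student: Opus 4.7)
The plan is to prove the two assertions separately. For $\dom\partial F=\Omega$, the inclusion $\dom\partial F\subseteq\Omega$ is immediate from the definition of the subdifferential: any $\bv$ with $\partial F(\bv)\neq\varnothing$ must lie in $\dom F$, which equals $\Omega$ since $f$ is finite everywhere and $\iota_\Omega(\bv)<+\infty$ forces $\bv\in\Omega$. For the reverse inclusion $\Omega\subseteq\dom\partial F$, I would pick an arbitrary $\bv\in\Omega$ and observe that $\iota_\Omega(\bv)=0\leqs\iota_\Omega(\bu)$ for every $\bu\in\bbRN$, so $\bv$ is a (global, hence local) minimizer of $\iota_\Omega$. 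By Fermat's rule (Fact \ref{fact:Fermat}) this gives ${\bm0}_N\in\partial\iota_\Omega(\bv)$. Since $f$ is polynomial, hence differentiable on a neighborhood of $\bv$, Fact \ref{fact:frechetgrad} yields $\partial F(\bv)=\nabla f(\bv)+\partial\iota_\Omega(\bv)$, so $\nabla f(\bv)=\bQ_\epsilon\bv-\bp\in\partial F(\bv)$, showing $\partial F(\bv)\neq\varnothing$.

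For semi-algebraicity of $F$, the approach is to assemble the graph of $F$ from pieces that are visibly semi-algebraic. I would first show $\Omega$ itself is semi-algebraic by writing
\begin{equation*}
\Omega=\bigcup_{\substack{J\subseteq\bbN_N\\|J|=N-m}}\left\{\bv\in\bbRN\,\Big|\,v_j=0\text{ for }j\in J,\ -v_i\leqs0\text{ for }i\in\bbN_N\backslash J\right\},
\end{equation*}
and then rewriting each closed half-space condition $-v_i\leqs0$ as the union $\{-v_i<0\}\cup\{v_i=0\}$. After distributing, $\Omega$ becomes a finite union of finite intersections of polynomial equalities and strict polynomial inequalities, which matches \eqref{eq_defsemialge}.

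Next, the graph of $f$ is the zero set of the polynomial $t-\tfrac12\bv^\top\bQ_\epsilon\bv+\bp^\top\bv$ in $\bbR^{N+1}$, so $f$ is semi-algebraic. Since the graph of $F$ equals
\begin{equation*}
\{(\bv,t)\in\bbRN\times\bbR\mid \bv\in\Omega,\ t=f(\bv)\}=\mathrm{graph}(f)\cap(\Omega\times\bbR),
\end{equation*}
and both $\mathrm{graph}(f)$ and $\Omega\times\bbR$ are semi-algebraic (the latter because the class of semi-algebraic sets is closed under Cartesian products), their intersection is semi-algebraic, so $F$ is semi-algebraic.

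I do not anticipate a substantial obstacle here: the only nontrivial step is verifying that the combinatorial sparsity constraint $\|\bv\|_0\leqs m$ can be captured by a finite union of polynomial systems, and the union-over-support-sets representation above handles it cleanly. The remaining work is routine closure properties of the semi-algebraic class and a direct application of Fermat's rule and the sum rule for subdifferentials.
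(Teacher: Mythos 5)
Your proposal is correct and follows essentially the same route as the paper: both inclusions for $\dom\partial F$ are obtained via $\bm{0}_N\in\partial\iota_{\Omega}(\bv)$ together with the sum rule of Fact~\ref{fact:frechetgrad} (you invoke Fermat's rule where the paper verifies the Fr\'echet subdifferential condition directly and applies Fact~\ref{fact:Frechetsublimit}), and the semi-algebraicity argument rests on the same decomposition of the sparsity constraint as a union over the $\binom{N}{N-m}$ support patterns (you intersect $\mathrm{graph}(f)$ with $\Omega\times\bbR$ where the paper writes $F$ as a sum of the semi-algebraic functions $f$ and $\iota_{\Omega}$). The only substantive difference is cosmetic, and your explicit rewriting of $-v_i\leqs 0$ as $\{-v_i<0\}\cup\{v_i=0\}$ to match the strict-inequality form of \eqref{eq_defsemialge} is in fact slightly more careful than the paper's presentation.
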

\begin{proof}
We first prove that $\dom\partial F=\Omega$. Note that $\dom F=\Omega$ is closed, which together with the definition of limiting-subdifferential implies that $\dom\partial F\subset \Omega$. For any $\bv\in\Omega$, it is easy to verify that ${\bm0}_N\in\hat{\partial}\iota_{\Omega}(\bv)$. Then we see from Fact \ref{fact:Frechetsublimit} that ${\bm0}_N\in\partial\iota_{\Omega}(\bv)$. Now by Fact \ref{fact:frechetgrad}, we have $\nabla f(\bv)\in\partial F(\bv)$, which means that $\partial F(\bv)\neq\varnothing$, that is, $\bv\in\dom\partial F$. Hence $\Omega\subset\dom\partial F$. This proves $\dom\partial F=\Omega$.

We then prove that $F$ is semi-algebraic. From the definition of semi-algebraic function, we are easy to see that the sum of semi-algebraic functions is still semi-algebraic. It is obvious that function $f$ is semi-algebraic. To prove that $F$ is semi-algebraic, it suffices to show that $\iota_{\Omega}$ is semi-algebraic. The graph of $\iota_{\Omega}$ is given by
\begin{equation}\label{eq_graiotaOmega}
\gra\iota_{\Omega}=\left\{\bx\in\bbR^{N+1}{\big|}\hspace{2pt}\bx_{1:N}\geqs{\bm0}_N,\ \|\bx_{1:N}\|_0\leqs m\ \mbox{and}\ x_{N+1}=0\right\},
\end{equation}
where $\bx_{1:N}:=(x_1,x_2,\ldots,x_N)^\top$. Note that there are $K:=\binom{N}{N-m}$ combinations to choose an index set with $(N-m)$ elements out of the set $\{1,2,\ldots,N\}$. We denote these index sets with size $(N-m)$ by $J_1,J_2,\ldots,J_K$, and let $\tilde{J}_i:=J_i\cup\{N+1\}$, $i\in\bbN_K$. Then the graph of $\iota_{\Omega}$ in \eqref{eq_graiotaOmega} can be represented by
$$
\gra\iota_{\Omega}=\bigcup_{j=1}^{K}\left[\left(\bigcap_{i\in \tilde{J}_j}\left\{\bm{x}\in\mathbb{R}^{N+1}{\big|}\hspace{2pt}x_i=0\right\}\right)\bigcap\left(\bigcap_{i\notin\tilde{J}_j}\left\{\bm{x}\in\mathbb{R}^{N+1}{\big|}\hspace{2pt}-x_i\leqs0\right\}\right)\right],
$$
which implies that $\iota_{\Omega}$ is a semi-algebraic function. This completes the proof.
\end{proof}

We show in the following proposition that the objective function $F:=f+\iota_{\Omega}$ satisfies the KL property at any accumulation point of sequence $\{\bv^{(k)}\}_{k\in\bbN}$.
\begin{proposition}\label{prop:FhasKLprop}
Let $\{\bv^{(k)}\}_{k\in\bbN}$ be generated by PGA. If $\bv^*$ is an accumulation point of $\{\bv^{(k)}\}_{k\in\bbN}$, then $F$ satisfies the KL property at $\bv^*$.
\end{proposition}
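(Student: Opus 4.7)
The plan is to assemble the previously established ingredients: the semi-algebraic characterization of $F$ from Proposition \ref{prop_Fsemi}, the location of accumulation points from Lemma \ref{lem:accpointinOmega}, and the general bridge from semi-algebraicity to the KL property given by Lemma \ref{lem:semialgeKL}. In essence, there is little new work to do; the proposition is essentially a direct corollary.

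First, I would verify the standing hypotheses of Lemma \ref{lem:semialgeKL} for $F$. Since $f$ is a polynomial (hence continuous) and $\iota_{\Omega}$ is the indicator of the closed set $\Omega$ defined in \eqref{def:Omega1}, the function $F=f+\iota_{\Omega}$ is proper (it takes the finite value $f(\bv)$ on the nonempty set $\Omega$) and lower semicontinuous on $\bbRN$ (the sum of a continuous function and the indicator of a closed set). This check is routine but should be stated explicitly so that Lemma \ref{lem:semialgeKL} can be invoked.

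Next, I would locate $\bv^*$ inside $\dom\partial F$. Since $\bv^*$ is an accumulation point of the PGA sequence $\{\bv^{(k)}\}_{k\in\bbN}$, Lemma \ref{lem:accpointinOmega} yields $\bv^*\in\Omega$. Proposition \ref{prop_Fsemi} asserts that $\dom\partial F=\Omega$, so $\bv^*\in\dom\partial F$.

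Finally, Proposition \ref{prop_Fsemi} also states that $F$ is semi-algebraic. Combining this with the proper lower semicontinuity of $F$, Lemma \ref{lem:semialgeKL} implies that $F$ satisfies the KL property at every point of $\dom\partial F$, and in particular at $\bv^*$. This closes the argument. There is no genuine obstacle here; the only minor care needed is to confirm the lower semicontinuity of $F$ (so that the hypotheses of Lemma \ref{lem:semialgeKL} are literally met), which follows immediately from $\Omega$ being closed.
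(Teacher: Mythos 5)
Your proposal is correct and follows exactly the paper's own argument: Lemma \ref{lem:accpointinOmega} places $\bv^*$ in $\Omega$, Proposition \ref{prop_Fsemi} gives $\dom\partial F=\Omega$ together with semi-algebraicity of $F$, and Lemma \ref{lem:semialgeKL} concludes. Your additional explicit check that $F$ is proper and lower semicontinuous is a harmless (and reasonable) bit of extra care that the paper leaves implicit.
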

\begin{proof}
Since $\bv^*$ is an accumulation point of $\{\bv^{(k)}\}_{k\in\bbN}$, it follows from Lemma \ref{lem:accpointinOmega} that $\bv^*\in\Omega$. By Proposition \ref{prop_Fsemi}, we know that $F$ is semi-algebraic and $\bv^*\in\dom\partial F$. Thus the desired result follows from Lemma \ref{lem:semialgeKL} immediately.
\end{proof}

We then show the continuity of the proximity operator $\prox_{\iota_{\Omega}}$ in the following proposition, which is also required to prove the convergence of PGA.

\begin{proposition}\label{prop:proxiotacont}
Let $\{\bx^{(k)}\}_{k\in\bbN}\subset\bbRN$ be a sequence converges to some $\bx^*\in\Omega$, and let $\bh^{(k)}=\prox_{\iota_{\Omega}}(\bx^{(k)})$ for $k\in\bbN$ and $\bh^*=\prox_{\iota_{\Omega}}(\bx^*)$ be given according to Proposition \ref{prop:proxiotaOmega}. Then $\lim\limits_{k\to\infty}\bh^{(k)}=\bh^*$.
\end{proposition}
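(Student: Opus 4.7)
The plan is to use a short variational argument based on the defining property of the proximity operator, leveraging the key hypothesis that the limit $\bx^*$ lies in $\Omega$.

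First I would observe that since $\bx^*\in\Omega$, the proximity problem $\argmin_{\bu\in\Omega}\tfrac{1}{2}\|\bu-\bx^*\|_2^2$ is uniquely solved by $\bx^*$ itself, which attains the (zero) minimum and does so strictly in the squared distance. Hence $\bh^*=\bx^*$. One can alternatively verify this directly from the formula in Proposition \ref{prop:proxiotaOmega}: the condition $\bx^*\in\Omega$ forces $\bx^*\geqs\bm 0_N$ and $m_{\bx^*}\leqs m$, so $J^{\bx^*}=J_{\text{pos}}^{\bx^*}$ and the prescribed vector $\bh^*$ coincides with $\bx^*$ (zero coordinates of $\bx^*$ are zeroed out anyway).

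Next I would exploit that $\bh^{(k)}\in\prox_{\iota_{\Omega}}(\bx^{(k)})$ means $\bh^{(k)}\in\Omega$ and $\|\bh^{(k)}-\bx^{(k)}\|_2\leqs\|\bu-\bx^{(k)}\|_2$ for every $\bu\in\Omega$. Applying this inequality to the feasible test point $\bu=\bx^*$ yields $\|\bh^{(k)}-\bx^{(k)}\|_2\leqs\|\bx^*-\bx^{(k)}\|_2$. A single triangle-inequality step then gives
$$\|\bh^{(k)}-\bh^*\|_2=\|\bh^{(k)}-\bx^*\|_2\leqs\|\bh^{(k)}-\bx^{(k)}\|_2+\|\bx^{(k)}-\bx^*\|_2\leqs 2\|\bx^{(k)}-\bx^*\|_2,$$
which tends to zero by the assumed convergence $\bx^{(k)}\to\bx^*$.

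There is no substantive obstacle here; the delicate point that usually breaks continuity of $\prox_{\iota_{\Omega}}$ — namely, multivaluedness when the top-$m$ positive components of the argument contain ties — is neutralized by the hypothesis $\bx^*\in\Omega$, which forces $\prox_{\iota_{\Omega}}(\bx^*)$ to be the singleton $\{\bx^*\}$. This hypothesis is essential: without it (e.g.\ $\bx^*=(1,1)$ with $m=1$), the prescription in Proposition \ref{prop:proxiotaOmega} depends on a tie-breaking rearrangement and continuity can fail.
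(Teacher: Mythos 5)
Your proof is correct, but it takes a genuinely different and more economical route than the paper's. The paper proves the claim by a componentwise case analysis: it splits into the cases $m_{\bx^*}=0$ and $0<m_{\bx^*}\leqs m$, chooses $\delta$ small relative to the smallest positive entry of $\bx^*$, and shows that for large $k$ the retained index set $J^{\bx^{(k)}}$ must contain $J_{\text{pos}}^{\bx^*}$ while all other entries of $\bh^{(k)}$ are uniformly small, yielding $\|\bh^{(k)}-\bh^*\|_2\leqs\sqrt{N}\delta$. You instead observe that $\bx^*\in\Omega$ forces $\bh^*=\bx^*$, then use the best-approximation characterization $\|\bh^{(k)}-\bx^{(k)}\|_2\leqs\|\bu-\bx^{(k)}\|_2$ for all $\bu\in\Omega$ with the feasible test point $\bu=\bx^*$, and close with one triangle inequality to get $\|\bh^{(k)}-\bh^*\|_2\leqs 2\|\bx^{(k)}-\bx^*\|_2$. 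Both steps are valid: Proposition \ref{prop:proxiotaOmega} guarantees $\bh^{(k)}$ attains the minimum of $\|\cdot-\bx^{(k)}\|_2$ over $\Omega$, and $\bx^*\in\Omega$ makes it an admissible competitor. Your argument buys three things the paper's does not: it is selection-independent (it works for any element of the possibly multivalued $\prox_{\iota_{\Omega}}(\bx^{(k)})$, not just the specific one prescribed by Proposition \ref{prop:proxiotaOmega}); it gives an explicit quantitative rate with constant $2$; and it never needs the combinatorial structure of $\Omega$, only that $\Omega$ contains $\bx^*$ and that the prox is attained. What the paper's longer argument buys in exchange is structural information --- that for large $k$ the support selection stabilizes and $h_j^{(k)}=x_j^{(k)}$ on $J_{\text{pos}}^{\bx^*}$ --- though that extra information is not used downstream. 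Your closing remark correctly identifies the hypothesis $\bx^*\in\Omega$ as the point that neutralizes the tie-breaking discontinuity of $\prox_{\iota_{\Omega}}$.
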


\begin{proof}		
Since $\bx^*\in\Omega$, $m_{\bx^*}\leqs m$. We first consider the case $m_{\bx^*}=0$, that is, $x_j^*\leqs0$ for all $j\in\bbN_N$. Then $\bh^*=\bm{0}_N$. For all $\varepsilon>0$, we let $\delta_1:=\frac{\varepsilon}{\sqrt{N}}$. Then there exists $K_1\in\bbN$ such that $\bx_j^{(k)}\leqs\delta_1$ for all $j\in\bbN_N$ and $k\geqs K_1$. By the definition of $\bh^{(k)}$, we know that $0\leqs h_j^{(k)}\leqs\delta_1$ for all $j\in\bbN_N$ and $k\geqs K_1$. Hence
$$
\|\bh^{(k)}-\bh^*\|_2=\|\bh^{(k)}\|_2\leqs\sqrt{N}\delta_1=\varepsilon,\ \ \mbox{for all}\ k\geqs K_1,
$$
which implies $\lim\limits_{k\to\infty}\bh^{(k)}=\bh^*$.

We then consider the case $0<m_{\bx^*}\leqs m$. In this case, the set $\{j\in\bbN_N|\hspace{2pt}x_j^*>0\}$ is nonempty. For all $\varepsilon>0$, we let $\delta_2:=\min\left\{\frac{1}{3}x_{\text{min-pos}}^*,\frac{\varepsilon}{\sqrt{N}}\right\}$, where
$$
x_{\text{min-pos}}^*:=\min\limits_{j\in\bbN_N}\{x_j^*|\hspace{2pt}x_j^*>0\}.
$$
There exists $K_2\in\bbN$ such that for all $k\geqs K_2$, $\|\bx^{(k)}-\bx^*\|_2\leqs\delta_2$, which indicates that
\begin{equation}\label{neq:jinJposxcase1}
x_j^{(k)}\geqs x_j^*-\delta_2\geqs\frac{2}{3}x_{\text{min-pos}}^*>0,\ \ \mbox{for}\ j\in J_{pos}^{\bx^*}
\end{equation}
and
\begin{equation}\label{neq:jnotinJposxcase1}
x_j^{(k)}\leqs x_j^*+\delta_2\leqs\delta_2\leqs\frac{1}{3}x_{\text{min-pos}}^*,\ \ \mbox{for}\ j\in\bbN_{N}\backslash J_{pos}^{\bx^*}.
\end{equation}
By the fact $m_{\bx^*}\leqs m$ and the definitions of $\bh^{(k)}$ and $\bh^*$, we can conclude from \eqref{neq:jinJposxcase1} and \eqref{neq:jnotinJposxcase1} that for all $k\geqs K_2$,
$$
h_j^{(k)}=x_j^{(k)},\ h_j^*=x_j^*,\ \ \mbox{for}\ j\in J_{pos}^{\bx^*}
$$
and
$$
0\leqs h_j^{(k)}\leqs\delta_2,\ h_j^*=0,\ \ \mbox{for}\ j\in\bbN_{N}\backslash J_{pos}^{\bx^*}.
$$
Thus $\|\bh^{(k)}-\bh^*\|_2\leqs\sqrt{N}\delta_2\leqs\varepsilon$, which yields $\lim\limits_{k\to\infty}\bh^{(k)}=\bh^*$. This completes the proof.
\end{proof}

We are now in a position to utilize Proposition \ref{prop:forconver} and Theorem \ref{thm:proxchar} to prove Theorem \ref{thm:convergtolocmin}.

\textbf{\textit{Proof of Theorem \ref{thm:convergtolocmin}.}}
We first prove item $(i)$. According to Propositions \ref{prop:Fdecr}, \ref{prop_qkpartial}, \ref{prop:wklimpoint} and \ref{prop:FhasKLprop}, the convergence of $\{\bv^{(k)}\}_{k\in\bbN}$ to a critical point $\bv^*\in\Omega$ of $F:=f+\iota_{\Omega}$ follows from Proposition \ref{prop:forconver} immediately. By item $(ii)$ in Theorem \ref{thm:proxchar}, to prove that $\bv^*$ is a locally optimal solution of model \eqref{targaltermod} (or model \eqref{spSRsubtractmod}), it suffices to show that \eqref{eq:proxiotaOmegav} holds. Since $\lim\limits_{k\to\infty}\bv^{(k)}=\bv^*$, the Lipschitz continuity of $\nabla f$ yields that
$$
\lim_{k\to\infty}\left(\bv^{(k)}-\alpha\nabla f(\bv^{(k)})\right)=\bv^*-\alpha\nabla f(\bv^*).
$$
Now by letting $k\to\infty$ on both side of \eqref{PGAiter1} and employing Proposition \ref{prop:proxiotacont}, we obtain \eqref{eq:proxiotaOmegav}, which proves the convergence of $\{\bv^{(k)}\}_{k\in\bbN}$ to a locally optimal solution $\bv^*$ of model \eqref{spSRsubtractmod}.

We then prove the convergence rates of $\{\bv^{(k)}\}_{k\in\bbN}$. Let $\Phi_k(\bv):=\|\bv-\bv^{(k)}+\alpha\nabla f(\bv^{(k)})\|_2^2$, $\bv\in\bbRN$. It is obvious that $\Phi_k$ is $2$-strongly convex, since the Hessian matrix of function $\Phi_k-\|\cdot\|_2^2$ is positive semidefinite. To prove the convergence rate, we first show that there exists $K\in\bbN$ such that
\begin{equation}\label{neq:nablafvsvkvs}
\langle\nabla f(\bv^*),\bv^{(k)}-\bv^*\rangle\geqs0\ \ \mbox{and}\ \ \langle\nabla\Phi_k(\bv^{(k+1)}),\bv^*-\bv^{(k+1)}\rangle\geqs0,
\end{equation}
for all $k\geqs K$. From the proof of Theorem \ref{thm:proxchar} in Appendix \ref{proof:proxchar}, we see that \eqref{neq:gradfuvstargeq0} holds. It has been shown that $\lim_{k\to\infty}\bv^{(k)}=\bv^*$ and $\bv^{(k)}\in\Omega$ for all $k\in\bbN$ (see item $(i)$ in Proposition \ref{prop:Fdecr}). Then there exists $K_1\in\bbN$ such that $\bv^{(k)}\in B(\bv^*;\delta)\cap\Omega$ for all $k\geqs K_1$, which together with \eqref{neq:gradfuvstargeq0} implies that the first inequality in \eqref{neq:nablafvsvkvs} holds for all $k\geqs K_1$. According to the definition of $\Phi_k$ and \eqref{PGAiter1}, we see that
$$
\bv^{(k+1)}=\argmin_{\bv\in\bbRN}\frac{1}{2}\|\bv-\bv^{(k)}+\alpha\nabla f(\bv^{(k)})\|_2^2+\iota_{\Omega}(\bv)=\argmin_{\bv\in\Omega}\Phi_k(\bv).
$$
By a procedure similar to the first paragraph of the proof of Theorem \ref{thm:proxchar}, we can establish that
$$
\bv^{(k+1)}=\prox_{\iota_{\Omega}}\left(\bv^{(k+1)}-\alpha\nabla \Phi_k(\bv^{(k+1)})\right).
$$
Note that $\Phi_k$ is also strongly convex. Using a proof analogous to that of the first inequality in \eqref{neq:nablafvsvkvs}, we can establish the existence of $K_2\in\bbN$ such that the second inequality in \eqref{neq:nablafvsvkvs} holds for all $k\geqs K_2$. By setting $K=\max\{K_1,K_2\}$, we conclude that \eqref{neq:nablafvsvkvs} holds for all $k\geqs K$.

It follows from Lemma \ref{lem:strongconvex} that
\begin{equation}\label{neq:Phikxstarxk}
\Phi_k(\bv^*)\geqs\Phi_k(\bv^{(k+1)})+\langle\nabla\Phi_k(\bv^{(k+1)}),\bv^*-\bv^{(k+1)}\rangle+\|\bv^*-\bv^{(k+1)}\|_2^2,
\end{equation}
which together with the second inequality in \eqref{neq:nablafvsvkvs} implies that
$$
\Phi_k(\bv^*)\geqs\Phi_k(\bv^{(k+1)})+\|\bv^*-\bv^{(k+1)}\|_2^2,
$$
that is,
\begin{equation}\label{neq:Phikvexpan1}
\|\bv^*-\bv^{(k)}+\alpha\nabla f(\bv^{(k)})\|_2^2\geqs\|\bv^{(k+1)}-\bv^{(k)}+\alpha\nabla f(\bv^{(k)})\|_2^2+\|\bv^*-\bv^{(k+1)}\|_2^2,
\end{equation}
for all $k\geqs K$. For simplicity of notation, we define $\bz^{(k)}:=\bv^{(k+1)}-\bv^{(k)}$, $k\in\bbN$. Expanding \eqref{neq:Phikvexpan1} and dividing the resulting inequality by $2\alpha$ yields
\begin{equation}\label{neq:Phikvexpan2}
\langle\nabla f(\bv^{(k)}),\bz^{(k)}\rangle+\frac{1}{2\alpha}\|\bz^{(k)}\|_2^2+\frac{1}{2\alpha}\|\bv^*-\bv^{(k+1)}\|_2^2\leqs\frac{1}{2\alpha}\|\bv^*-\bv^{(k)}\|_2^2+\langle\nabla f(\bv^{(k)}),\bv^*-\bv^{(k)}\rangle,
\end{equation}
for all $k\geqs K$. Recall that \eqref{neq:desclemfvkkvk} in the proof of Proposition \ref{prop:Fdecr} holds. Since $\alpha\in\left(0,\frac{1}{\|\bQ_{\epsilon}\|_2}\right)$, \eqref{neq:desclemfvkkvk} implies that
\begin{equation}\label{neq2:desclemfvkkvk}
f(\bv^{(k+1)})-f(\bv^{(k)})\leqs\langle\nabla f(\bv^{(k)}),\bz^{(k)}\rangle+\frac{1}{2\alpha}\|\bz^{(k)}\|_2^2,\ \ \mbox{for all}\ k\in\bbN.
\end{equation}
Combining \eqref{neq2:desclemfvkkvk} and \eqref{neq:Phikvexpan2}, we obtain that
\begin{equation}\label{neq3:desclemfvkkvk}
f(\bv^{(k+1)})-f(\bv^{(k)})+\frac{1}{2\alpha}\|\bv^*-\bv^{(k+1)}\|_2^2\leqs\frac{1}{2\alpha}\|\bv^*-\bv^{(k)}\|_2^2+\langle\nabla f(\bv^{(k)}),\bv^*-\bv^{(k)}\rangle,
\end{equation}
for all $k\geqs K$. It follows from the first order condition for convexity (Proposition B.3 of \cite{nonlinprog}) that
$$
f(\bv^*)\geqs f(\bv^{(k)})+\langle\nabla f(\bv^{(k)}),\bv^*-\bv^{{(k)}}\rangle,
$$
which together with \eqref{neq3:desclemfvkkvk} yields
$$
f(\bv^{(k+1)})+\frac{1}{2\alpha}\|\bv^*-\bv^{(k+1)}\|_2^2\leqs f(\bv^*)+\frac{1}{2\alpha}\|\bv^*-\bv^{(k)}\|_2^2,
$$
that is,
$$
f(\bv^{(k+1)})-f(\bv^*)\leqs\frac{1}{2\alpha}\left(\|\bv^{(k)}-\bv^*\|_2^2-\|\bv^{(k+1)}-\bv^*\|_2^2\right),\ \ \mbox{for all}\ k\geqs K.
$$
We see from \eqref{neq:fvkkleqfvk} that $\{f(\bv^{(k)})\}_{k\in\bbN}$ is monotonically decreasing. Then for all $j\in\bbN_+$,
\begin{align*}
j\left(f(\bv^{(K+j)})-f(\bv^*)\right)&\leqs\sum_{i=K}^{K+j-1}\left(f(\bv^{(i+1)})-f(\bv^*)\right)\\
&\leqs\frac{1}{2\alpha}\sum_{i=K}^{K+j-1}\left(\|\bv^{(i)}-\bv^*\|_2^2-\|\bv^{(i+1)}-\bv^*\|_2^2\right)\\
&=\frac{1}{2\alpha}\left(\|\bv^{(K)}-\bv^*\|_2^2-\|\bv^{(K+j)}-\bv^*\|_2^2\right).
\end{align*}
Hence
\begin{equation}\label{neq:fvKjminusfvs}
f(\bv^{(K+j)})-f(\bv^*)\leqs\frac{1}{2\alpha j}\|\bv^{(K)}-\bv^*\|_2^2.
\end{equation}
Note that $\|\bv^{(K)}-\bv^*\|_2^2$ is a constant. Let $k=K+j$ and $C:=\frac{1}{\alpha}\|\bv^{(K)}-\bv^*\|_2^2$. Then \eqref{neq:fvKjminusfvs} implies that
$$
f(\bv^{(k)})-f(\bv^*)\leqs C\cdot\frac{1}{2j}\leqs C\cdot\frac{1}{k},\ \ \mbox{for}\ j\geqs K.
$$
Thus
\begin{equation}\label{eq:funvalconvrate}
|f(\bv^{(k)})-f(\bv^*)|=O\left(\frac{1}{k}\right).
\end{equation}

Combining Lemma \ref{lem:strongconvex} and the first inequality in \eqref{neq:nablafvsvkvs}, we obtain that
$$
f(\bv^{(k)})-f(\bv^*)\geqs\frac{\epsilon}{2}\|\bv^{(k)}-\bv^*\|_2^2,\ \ \mbox{for all}\ k\geqs K.
$$
This together with \eqref{eq:funvalconvrate} yields that $\|\bv^{(k)}-\bv^*\|_2=O\left(\frac{1}{\sqrt{k}}\right)$.

We next prove item $(ii)$. The fact $\bv^*\geqs{\bm0}_N$ follows from \eqref{eq:proxiotaOmegav} and Proposition \ref{prop:proxiotaOmega} directly. Assume, to reach a contradiction, that $\bv^*={\bm0}_N$. Item $(i)$ in this theorem shows that $\bv^*$ is a locally optimal solution of model \eqref{targaltermod}. Then there exists $\delta>0$ such that
\begin{equation}\label{neq:Fvgeq0}
f(\bv)\geqs f(\bv^*)=f({\bm0}_N)=0,\ \ \mbox{for all}\ \bv\in\Omega\cap B(\bv^*;\delta).
\end{equation}
We recall from the assumption of this theorem that there exists $\tilde{\bw}\in\Omega$ such that $\bp^\top\tilde{\bw}>0$. Let $\tilde{\bw}_{\alpha}:=\alpha\tilde{\bw}$, where $\alpha>0$. Then $\tilde{\bw}_{\alpha}\in\Omega$ and $\bp^\top\tilde{\bw}_{\alpha}>0$. Note that when $\alpha$ tends to $0$, the quadratic term $\frac{1}{2}\tilde{\bw}_{\alpha}^\top\bQ_{\epsilon}\tilde{\bw}_{\alpha}$ is of higher order infinitesimal than the linear term $\bp^\top\tilde{\bw}_{\alpha}$. There exists some sufficient small $\alpha>0$ such that $\tilde{\bw}_{\alpha}\in B(\bv^*;\delta)$ and
$$
f(\tilde{\bw}_{\alpha})=\frac{1}{2}\tilde{\bw}_{\alpha}^\top\bQ_{\epsilon}\tilde{\bw}_{\alpha}-\bp^\top\tilde{\bw}_{\alpha}<0,
$$
which contradicts \eqref{neq:Fvgeq0}. Therefore, $\bv^*\neq{\bm0}_N$.

Lastly, we prove item $(iii)$. Since $\bv^*\geqs{\bm0}_N$ and $\bv^*\neq{\bm0}_N$. There exit $\varepsilon_0>0$ and $K'\in\bbN$ such that $(\bv^*)^\top{\bm1}_N>\varepsilon_0$ and $(\bv^{(k)})^\top{\bm1}_N>\varepsilon_0$ for all $k\geqs K'$, and hence $\bw^{(k)}=\frac{\bv^{(k)}}{(\bv^{(k)})^\top{\bm1}_N}$ for all $k\geqs K'$. Note that $\bv^*$ and $\{\bv^{(k)}\}_{k\in\bbN}$ are both bounded. There exist $C_1>0$ and $C_2>0$ such that
$$
\frac{\|\bv^{(k)}\|_2}{\left|(\bv^{(k)})^\top{\bm1}_N\right|\cdot\left|(\bv^*)^\top{\bm1}_N\right|}\leqs C_1\ \ \mbox{and}\ \ C_1\sqrt{N}+\left|\frac{1}{(\bv^*)^\top{\bm1}_N}\right|\leqs C_2.
$$
Then for all $k\geqs K'$,
\begin{align*}
\left\|\bw^{(k)}-\frac{\bv^{(k)}}{(\bv^*)^\top{\bm1}_N}\right\|_2&\leqs\left|\frac{1}{(\bv^{(k)})^\top{\bm1}_N}-\frac{1}{(\bv^*)^\top{\bm1}_N}\right|\cdot\|\bv^{(k)}\|_2\\
&=\frac{\|\bv^{(k)}\|_2}{\left|(\bv^{(k)})^\top{\bm1}_N\right|\cdot\left|(\bv^*)^\top{\bm1}_N\right|}\cdot\left|(\bv^{(k)}-\bv^*)^\top{\bm1}_{N}\right|\\
&\leqs C_1\sqrt{N}\|\bv^{(k)}-\bv^*\|_2,
\end{align*}
and hence
\begin{align*}
\|\bw^{(k)}-\bw^*\|_2&=\left\|\bw^{(k)}-\frac{\bv^{(k)}}{(\bv^*)^\top{\bm1}_N}+\frac{\bv^{(k)}}{(\bv^*)^\top{\bm1}_N}-\bw^*\right\|_2\\
&\leqs C_1\sqrt{N}\|\bv^{(k)}-\bv^*\|_2+\left|\frac{1}{(\bv^*)^\top{\bm1}_N}\right|\|\bv^{(k)}-\bv^*\|_2\\
&\leqs C_2\|\bv^{(k)}-\bv^*\|_2.
\end{align*}
This implies that $\|\bw^{(k)}-\bw^*\|_2=O\left(\frac{1}{\sqrt{k}}\right)$. Similarly, we can prove that there exists $C_3>0$ such that
$$
|S(\bw^{(k)})-S(\bw^{*})|\leqs C_3\|\bw^{(k)}-\bw^*\|_2,\ \ \mbox{for all}\ k\geqs K',
$$
which implies that $|S(\bw^{(k)})-S(\bw^{*})|=O\left(\frac{1}{\sqrt{k}}\right)$. This completes the proof.
\hspace*{\fill}~\QED

\subsection{Proof of Theorem \ref{thm:wlocalopt}}\label{proof:wlocalopt}

To prove Theorem \ref{thm:wlocalopt}, we recall Proposition 11.4 of \cite{bauschke2017convex} as the following lemma.

\begin{lemma}\label{lem:locglobconvex}
Let $\psi:\bbRn\to\overline{\bbR}$ be be proper and convex. Then every local minimizer of $\psi$ is a global minimizer.
\end{lemma}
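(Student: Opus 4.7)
The plan is to prove the lemma by contradiction, using only the definition of local minimizer together with the convexity inequality of $\psi$. Let $\bx^*\in\bbRn$ be a local minimizer of $\psi$, so there exists $\delta>0$ with $\psi(\bx^*)\leqs \psi(\by)$ for all $\by\in B(\bx^*;\delta)$. Since $\psi$ is proper, $\psi(\bx^*)>-\infty$, and the local minimizer property also forces $\psi(\bx^*)<+\infty$ (otherwise all nearby values would be $+\infty$, which is fine but then any $\bz\in\dom\psi$ would already satisfy $\psi(\bz)\leqs \psi(\bx^*)$; I'll handle this edge case in passing).

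Assume toward a contradiction that $\bx^*$ is not a global minimizer. Then there exists $\bz\in\bbRn$ with $\psi(\bz)<\psi(\bx^*)$, so in particular $\bz\in\dom\psi$. For any $\lambda\in(0,1)$, set $\by_\lambda:=(1-\lambda)\bx^*+\lambda\bz$. Convexity of $\psi$ yields
\[
\psi(\by_\lambda)\leqs (1-\lambda)\psi(\bx^*)+\lambda\psi(\bz)<(1-\lambda)\psi(\bx^*)+\lambda\psi(\bx^*)=\psi(\bx^*),
\]
where the strict inequality comes from $\psi(\bz)<\psi(\bx^*)$ (and both quantities being finite, which is why the arithmetic manipulation is valid).

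Next, observe that $\|\by_\lambda-\bx^*\|_2=\lambda\|\bz-\bx^*\|_2$, so by choosing $\lambda\in\bigl(0,\min\{1,\delta/\|\bz-\bx^*\|_2\}\bigr)$ (taking $\lambda$ arbitrary in $(0,1)$ if $\bz=\bx^*$, which cannot happen here since $\psi(\bz)\neq\psi(\bx^*)$), we guarantee $\by_\lambda\in B(\bx^*;\delta)$. This produces a point in the neighborhood with $\psi(\by_\lambda)<\psi(\bx^*)$, contradicting the local minimizer property. Hence no such $\bz$ exists, and $\bx^*$ is a global minimizer.

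The proof is entirely routine; the only mild subtlety is ensuring the strict inequality in the convex combination survives, which requires $\psi(\bx^*)$ to be finite so that the algebraic cancellation is legitimate. Since $\psi$ is proper and $\bx^*$ admits a neighborhood on which $\psi\geqs\psi(\bx^*)$, if $\psi(\bx^*)=+\infty$ then $\psi\equiv+\infty$ on $B(\bx^*;\delta)$, and one needs to separately note that the existence of $\bz$ with $\psi(\bz)<+\infty$ makes the lemma vacuously true in that degenerate case (or one may simply absorb this into the hypothesis by noting $\bx^*\in\dom\psi$ is implicit in the phrase ``local minimizer'' in the standard convention used later). No significant obstacle is anticipated.
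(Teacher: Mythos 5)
Your proof is correct. Note that the paper does not actually prove this lemma at all: it simply recalls it as Proposition 11.4 of the Bauschke--Combettes reference, so there is no in-paper argument to compare against. What you have written is the standard self-contained argument: assume a strictly better point $\bz$ exists, move along the segment from the local minimizer $\bx^*$ toward $\bz$, use the convexity inequality to get a strictly smaller value at $\by_\lambda=(1-\lambda)\bx^*+\lambda\bz$, and take $\lambda$ small enough that $\by_\lambda$ lands inside the ball witnessing local optimality. Your handling of the extended-real-valued subtleties is also appropriate: the strict inequality $(1-\lambda)\psi(\bx^*)+\lambda\psi(\bz)<\psi(\bx^*)$ genuinely requires $\psi(\bx^*)<+\infty$, and you correctly observe that this is guaranteed under the usual convention that a local minimizer lies in $\dom\psi$ (which is the convention in force in the cited reference and in the paper's application, where the candidate minimizer $\bv^*$ satisfies $\iota_{\hat{\Omega}}(\bv^*)=0$ so that $\hat F(\bv^*)$ is finite). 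No gaps.
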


\textbf{\textit{Proof of Theorem \ref{thm:wlocalopt}.}}
We first prove item $(i)$. Let $\iota_{\hat{\Omega}}$ be defined by
$$
\iota_{\hat{\Omega}}(\bv):=\begin{cases}
0,&if\ \bv\in\hat{\Omega};\\
+\infty,&otherwise.
\end{cases}
$$
Then model \eqref{spSRsubtractmod3} is equilvalent to $\min\limits_{\bv\in\bbRN}\hat{F}(\bv)$, where $\hat{F}:=f+\iota_{\hat{\Omega}}$. Of course, $\iota_{\hat{\Omega}}$ is proper. The convexity of $\hat{\Omega}$ implies that $\iota_{\hat{\Omega}}$ is convex (see Example 8.3 of \cite{bauschke2017convex}). Recall that $f$ is strictly convex, and hence $\hat{F}$ is proper and strictly convex. Since $\bv^*$ is a locally optimal solution of model \eqref{targaltermod} and $\hat{\Omega}\subset\Omega$, there exists $\delta>0$ such that
\begin{equation}\label{neq:hatvlocinOmega}
f(\bu)\geqs f(\bv^*),\ \ \mbox{for all}\ \bu\in B(\bv^*;\delta)\cap\hat{\Omega}.
\end{equation}
The fact $\bv^*\in\hat{\Omega}$ gives $\iota_{\hat{\Omega}}(\bv^*)=0$. Then \eqref{neq:hatvlocinOmega} implies that $\hat{F}(\bu)\geqs\hat{F}(\bv^*)$ for all $\bu\in B(\bv^*;\delta)$, that is, $\bv^*$ is a local minimizer of $\hat{F}$. Now it follows from Lemma \ref{lem:locglobconvex} that $\bv^*$ is also a global minimizer of $\hat{F}$. The strict convexity of $\hat{F}$ implies the uniqueness of its global minimizer.

We next prove item $(ii)$. From item $(i)$ in this theorem and item $(ii)$ in Theorem \ref{thm:convergtolocmin}, we see that $\bv^*\in\hat{\Omega}$ is a globally optimal solution of model \eqref{spSRsubtractmod3} and $\bv^*\neq{\bm0}_N$. Then we are able to prove that $\bp^\top\bv^*=(\bv^*)^\top\bQ_{\epsilon}\bv^*>0$. We omit this proof here since it is very similar to the proof of Lemma \ref{lem:pveqvQv}. Now we have ${\bv^*}=\frac{\bp^\top{\bv^*}}{(\bv^*)^\top\bQ_{\epsilon}{\bv^*}}{\bv^*}$. For any $\bv\in\hat{\Omega}$ such that $\bp^\top\bv>0$, we let $\bu:=\frac{\bp^\top\bv}{\bv^\top\bQ_{\epsilon}\bv}\bv$. Then $\bu\in\hat{\Omega}$ and $\bp^\top\bu>0$. Hence
$$
-\frac{1}{2}\frac{(\bp^\top{\bv^*})^2}{(\bv^*)^\top\bQ_{\epsilon}{\bv^*}}=\frac{1}{2}(\bv^*)^\top\bQ_{\epsilon}{\bv^*}-\bp^\top{\bv^*}\leqs\frac{1}{2}\bu^\top\bQ_{\epsilon}\bu-\bp^\top\bu=-\frac{1}{2}\frac{(\bp^\top\bv)^2}{\bv^\top\bQ_{\epsilon}\bv},
$$
which implies that
$$
\frac{\bp^\top{\bv^*}}{\sqrt{(\bv^*)^\top\bQ_{\epsilon}{\bv^*}}}\geqs\frac{\bp^\top\bv}{\sqrt{\bv^\top\bQ_{\epsilon}\bv}},\ \ \mbox{for all}\ \bv\in\hat{\Omega}.
$$
Since $\bv^*\in\hat{\Omega}$ and $\bv^*\neq{\bm0}_N$, by the definition of $\hat{\Omega}_1$, we see that $\bw^*\in\hat{\Omega}_1$. Note that $\hat{\Omega}_1\subset\hat{\Omega}$. For all $\bw\in\hat{\Omega}_1$,
\begin{equation*}
\frac{\bp^\top\bw}{\sqrt{\bw^\top\bQ_{\epsilon}\bw}}\leqs\frac{\bp^\top{\bv^*}}{\sqrt{(\bv^*)^\top\bQ_{\epsilon}{\bv^*}}}=\frac{\bp^\top\bw^*}{\sqrt{(\bw^*)^\top\bQ_{\epsilon}{\bw^*}}},
\end{equation*}
which implies that $\bw^*$ is a globally optimal solution of model $\max\limits_{\bw\in\hat{\Omega}_1}\ S(\bw)$.

Lastly, we prove item $(iii)$. Note that $w_j^*>0$ for all $j\in J_{\text{pos}}^{\bv^*}$. Let $w_{\text{min-pos}}^*:=\min\limits_{j\in J_{\text{pos}}^{\bv^*}}\{w_j^*\}$, $\delta:=\frac{1}{3}w_{\text{min-pos}}^*$, and let $\bw$ be any vector in $B(\bw^*;\delta)\cap\Omega_1$. Then $w_j\geqs 2\delta>0$ for $j\in J_{\text{pos}}^{\bv^*}$, and $|w_j|\leqs\delta$ for $j\in\bbN_N\backslash J_{\text{pos}}^{\bv^*}$. Since $\bw\in\Omega_1$ and $m_{\bv^*}=m$, we conclude that
$w_j=0$ for $j\in\bbN_N\backslash J_{\text{pos}}^{\bv^*}$, which implies that $\bw\in\hat{\Omega}_1$. It has been shown that $\bw^*$ is an optimal solution of model $\max\limits_{\bw\in\hat{\Omega}_1}\ S(\bw)$. Therefore, $S(\bw)\leqs S(\bw^*)$ for all $\bw\in B(\bw^*;\delta)\cap\Omega_1$, that is, $\bw^*$ is a locally optimal solution of model \eqref{model:Sharpeiota}. This completes the proof.
\hspace*{\fill}~\QED

\subsection{Proof of Theorem \ref{thm:globalopt}}\label{proof:convergtoglobmin}
\begin{proof}
According to Theorem \ref{thm:spequifracsubtr} and item $(i)$ in Theorem \ref{thm:proxchar}, to prove the desired result, it suffices to show that
\begin{equation}\label{eq:proxcharepsilon}
\bv^*=\prox_{\iota_{\Omega}}\left(\bv^*-\frac{1}{\epsilon}\nabla f(\bv^*)\right).
\end{equation}
From the proof of Theorem \ref{thm:convergtolocmin}, we know that \eqref{eq:proxiotaOmegav} holds. According to the computation of $\prox_{\iota_{\Omega}}$ in Proposition \ref{prop:proxiotaOmega}, to guarantee the validity of \eqref{eq:proxiotaOmegav}, we have $\nabla_i f(\bv^*)=0$ for all $i\in\supp(\bv^*)$. Otherwise, there exists some $i_0\in\supp(\bv^*)$ such that $v_{i_0}^*\neq v_{i_0}^*-\alpha\nabla_{i_0} f(\bv^*)$, which together with Proposition \ref{prop:proxiotaOmega} implies that $\bv^*\neq\prox_{\iota_{\Omega}}(v^*-\alpha\nabla f(\bv^*))$, a contradiction to \eqref{eq:proxiotaOmegav}.

Suppose that $m_{\bv^*}<m$. Then we have $\nabla_i f(\bv^*)\geqs0$ for all $i\in\bbN_N\backslash\supp(\bv^*)$. Otherwise, there exists some $i_1\in\bbN_N\backslash\supp(\bv^*)$ such that $v_{i_1}^*-\alpha\nabla_{i_1}f(\bv^*)>0$. Note that $m_{\bv^*}<m$. The operation of $\prox_{\iota_{\Omega}}$ will preserve the positive value $v_{i_1}^*-\alpha\nabla_{i_1}f(\bv^*)$ instead of truncating it as 0, which violates \eqref{eq:proxiotaOmegav}. In this case, we now have $v_i^*-\frac{1}{\epsilon}\nabla_i f(\bv^*)=v_i^*$ for $i\in\supp(\bv^*)$ and $v_i^*-\frac{1}{\epsilon}\nabla_i f(\bv^*)\leqs0$ for $i\in\bbN_N\backslash\supp(\bv^*)$, which imply that \eqref{eq:proxcharepsilon} holds.

Suppose that item $(ii)$ holds. Let $\delta:=\min\{v_i^*|i\in\supp(\bv^*)\}>0$. For $i\in\bbN_N\backslash\supp(\bv^*)$, since $\frac{1}{\epsilon}\nabla_i f(\bv^*)>-\delta$ and $v_i=0$, we have $v_{i}^*-\frac{1}{\epsilon}\nabla_i f(\bv^*)<\delta$. Note that $m_{\bv^*}=m$. The operation of $\prox_{\iota_{\Omega}}$ makes $v_{i}^*-\frac{1}{\epsilon}\nabla_i f(\bv^*)=v_i^*$ for $i\in\supp(\bv^*)$ and $v_{i}^*-\frac{1}{\epsilon}\nabla_i f(\bv^*)=0$ for $i\in\bbN_N\backslash\supp(\bv^*)$, that is, \eqref{eq:proxcharepsilon} holds. This completes the proof.
\end{proof}

\subsection{Validation of PGA's Global Optimality Through Simulation Experiments}\label{supp:validglobal}

To test the validation of PGA's global optimality, we conduct a set of simulation experiments by considering model \eqref{targaltermod}, where $\bQ_{\epsilon}:=\bQ^\top\bQ+\epsilon\bI$. The iterative scheme of PGA for solving this model is given by \eqref{PGAiter1} with $\alpha=\frac{0.99}{\|\bQ\|_2}$.

In the simulation experiments, we set ${\bm\Sigma}\in\bbR^{10\times 10}$ by $\Sigma_{ij}:=0.5^{|i-j|}$, and randomly generate a matrix $\bQ\in\bbR^{50\times10}$ from the multivariate normal distribution, with mean vector ${\bm0}_{10}$ and covariance matrix ${\bm\Sigma}$. We set $\bp$ as a random vector with components that are randomly generated numbers in the range $[-10,10]$, and casually set $\epsilon=0.001$ and the sparsity $m=3$.

The direct exhaustive approach enumerates all possible support set configurations, totaling $C_{10}^{3}=120$ cases. In each case, we solve a 3-dimension quadratic programming problem. By comparing the optimal solutions corresponding to these 120 cases, we obtain the exact globally optimal solution of model \eqref{targaltermod}. After that, we can evaluate the optimality of PGA's convergence.

For each experiment, we performed 500 iterations of PGA. To ensure the robustness of our findings, we used three different initializations: ${\bm0}_N$, ${\bm1}_N/N$ and ${\bm1}_N$. We repeated the experiments $10^4$ times for each initialization, with different $\bQ$ and $\bp$ in each run. We found that in over 7,200 of the $10^4$ trials, for any of the three initializations, both the normalized error of the iterative sequence $\|\bv^{(k)}-\bv^*\|_2/\|\bv^*\|_2$ and the normalized error of the function value $|f(\bv^{(k)})-f(\bv^*)|/|f(\bv^*)|$ were smaller than $10^{-10}$. Here $\bv^*$ denotes the globally optimal solution, and $\bv^{(k)}$ represents the iterative sequence at the $k$-th iteration of PGA. We show the plots of $\|\bv^{(k)}-\bv^*\|_2/\|\bv^*\|_2$ and $|f(\bv^{(k)})-f(\bv^*)|/|f(\bv^*)|$ in the following Figure \ref{fig:NEISandNEFV}, and show in Table \ref{tab:NEISandNEFV} these two normalized errors obtained at 500 iterations of PGA in ten simulation experiments.

\vspace{-1em}
\begin{figure}[h]
\centering
\subfloat{ \includegraphics[width=0.48\linewidth]{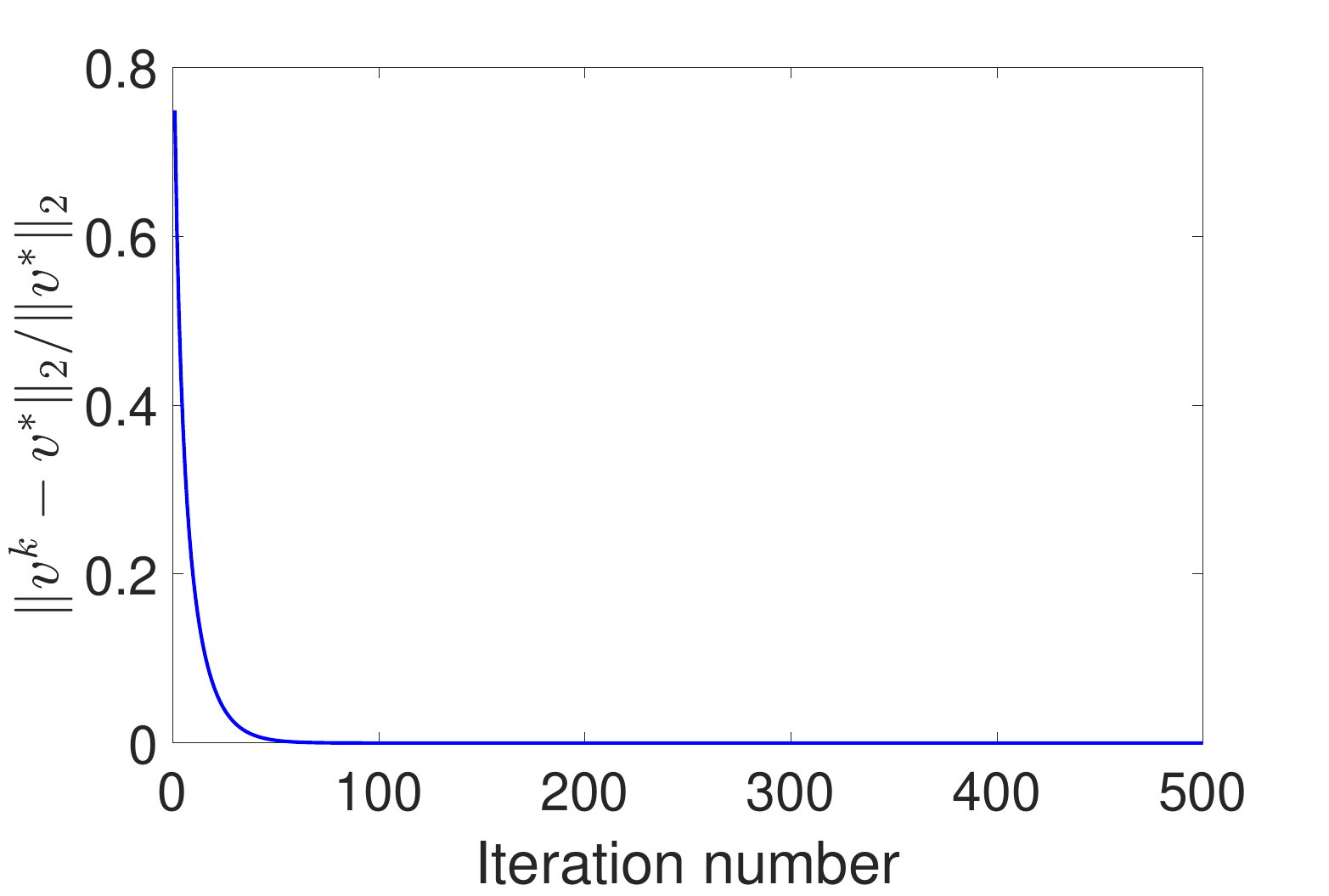}}
\subfloat{	\includegraphics[width=0.48\linewidth]{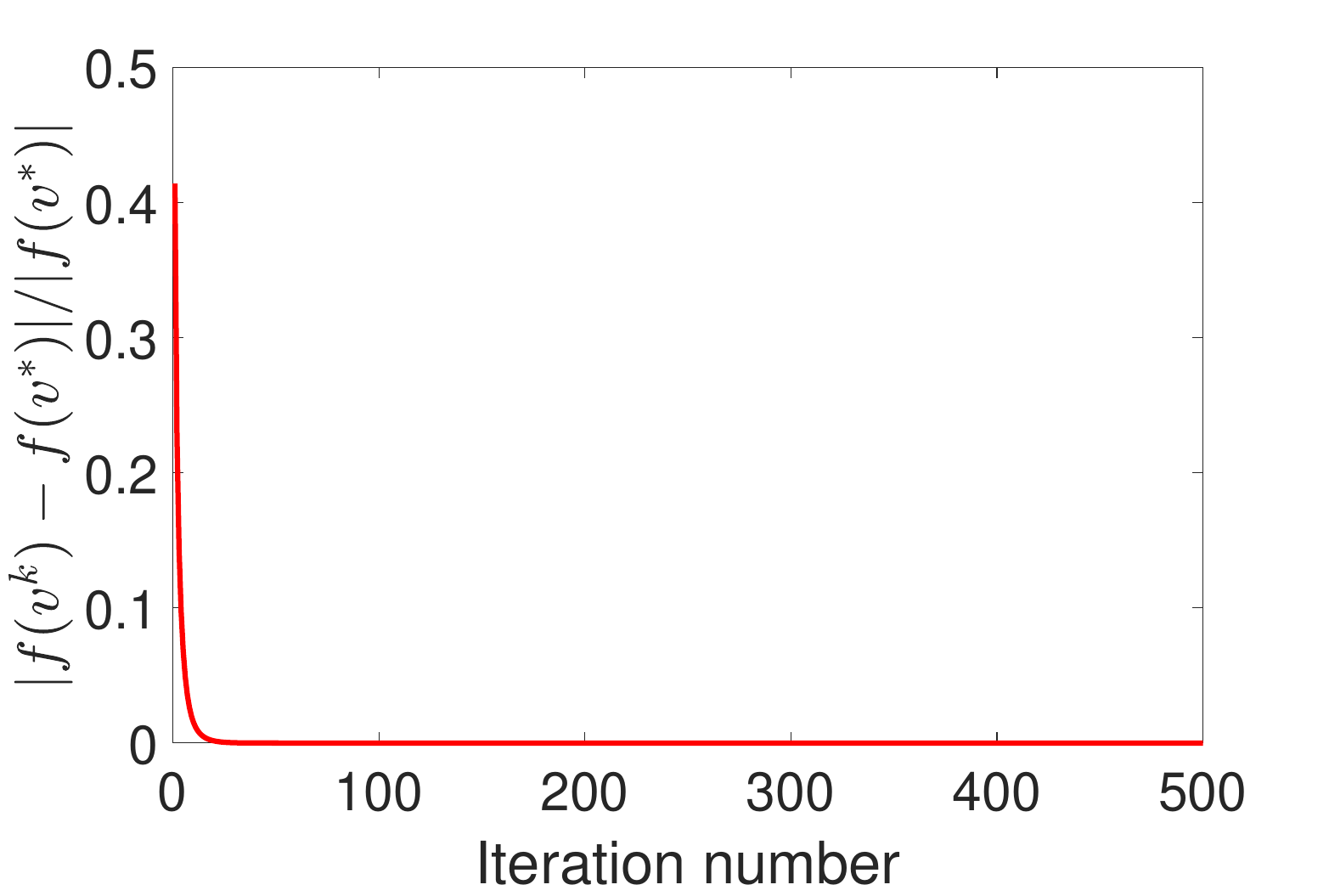}}
\caption{Simulation results of PGA for model \eqref{targaltermod}. Left: normalized error of the iterative sequence versus number of iterations. Right: normalized error of function value versus number of iterations.}
\label{fig:NEISandNEFV}
\end{figure}

\vspace{-1em}
\begin{table}[htbp]
\setlength{\tabcolsep}{0.7mm}
\small
\centering
\caption{The normalized errors obtained at 500 iterations of PGA in 10 simulation experiments.}
\label{itererror}
\scalebox{0.9}{
\begin{tabular}{|c|c|c|c|c|c|c|c|c|c|c|}
\hline
$k$&	1&	2&	3&	4&	5&	6&	7&	8&	9&	10\\
\hline
$\frac{\|\bv^{(k)}-\bv^*\|_2}{\|\bv^*\|_2}$&	2.45E-16&	1.62E-16&	3.03E-16&	6.23E-01&	1.14E-16&	7.68E-16&	7.41E-16&	9.55E-17&	1.50E-16&	2.52E-16\\
\hline
$\frac{\|f(\bv^{(k)})-f(\bv^*)\|_2}{\|f(\bv^*)\|_2}$ &	0.00 &	1.48E-16&	1.68E-16&	1.17E-01&	0.00 &	1.40E-16&	4.73E-16&	1.36E-16&	0.00 &	0.00 \\
\hline
\end{tabular}}
\label{tab:NEISandNEFV}
\end{table}	

From the simulation experiments, we conclude that the proposed PGA has a high probability (over 72\%) of directly converging to a globally optimal solution of model \eqref{targaltermod}. This finding is consistent with the sufficient conditions for global optimality in Theorem \ref{thm:globalopt}.

\subsection{Solving Algorithm: mSSRM-PGA}\label{supp:algo}

\begin{algorithm}
\caption{mSSRM-PGA}
\label{algo:mSSRM-PGA}
\noindent{\bf Input:} Given the sample asset return matrix $\bR\in\bbR^{T\times N}$ and the positive parameter $\epsilon$.

\noindent{\bf Preparation:} Let $\bp=\frac{1}{T}\bR^{\top}\bm{1}_T$, $\bQ=\frac{1}{\sqrt{T-1}}\left(\bR-\frac{1}{T}\bm{1}_{T\times T}\bR\right)$ and $\bQ_{\epsilon}=\bQ^\top\bQ+\epsilon\bI$. Compute the largest eigenvalue  $\lambda_1$ of $\bQ_{\epsilon}$, and set $\alpha=\frac{0.999}{\lambda_1}$.

\noindent{\bf Initialization:} Set $\bv^{(0)}=\bp$, $tol=10^{-5}$, $MaxIter=10^{4}$ and $k=0$.

\noindent{\bf repeat}

1. $\bv^{(k+1)}=\prox_{\iota_{\Omega}}\left(\bv^{(k)}-\alpha\left(\bQ_{\epsilon}\bv^{(k)}-\bp\right)\right)$

2. $k=k+1$\vspace{0.25em}

\noindent{\bf until} $\frac{\|\bv^{(k)}-\bv^{(k-1)}\|_2}{\|\bv^{(k-1)}\|_2}\leqs tol$ or $k>MaxIter$.

\noindent{\bf if} $\bv^{(k)}\neq{\bm0}_N$

3. $\bw^*=\frac{\bv^{(k)}}{(\bv^{(k)})^\top{\bm1}_N}$

\noindent{\bf else}

4. $\bw^*={\bm0}_N$

\noindent{\bf Output}: The portfolio $\bw^*$.

\end{algorithm}

\subsection{Additional Experimental Results}\label{supp:experiment}

The 1/N strategy rebalances to the equally-weighted portfolio on each trading time. S1, S2 and S3 are different versions of SSPO-$\ell_0$ in \eqref{eqn:sparsemodell0}, among which S1 is deterministic but S2 and S3 are randomized. The hyperparameters of these competitors are set according to the original papers.

FF25 contains 25 portfolios developed by BE/ME (book equity to market equity) and investment in the US market. FF25EU contains 25 portfolios developed by ME and prior return in the European market. FF32 contains 32 portfolios developed by BE/ME and investment in the US market. FF49 contains 49 industry portfolios in the US market. FF100 contains 100 portfolios developed by ME and BE/ME, while FF100MEINV contains 100 portfolios developed by ME and investment, both in the US market. The information of these data sets are given in Table \ref{tab:infodataset}.

\vspace{-0.5em}
\begin{table}[htbp]
\footnotesize
\centering
\caption{Information of 6 real-world monthly benchmark data sets.}
\label{tab:infodataset}
\begin{tabular}{|c|c|c|c|c|}
\hline
Data Set & Region & Time & Months & Assets\\
\hline
FF25 &US & $Jul/1971 \sim May/2023$ & 623 & 25\\
FF25EU &EU & $Nov/1990 \sim May/2023$ & 391 & 25\\
FF32 &US & $Jul/1971\sim May/2023$  & 623 &  32\\
FF49 &US   & $Jul/1971 \sim May/2023$ & 623 & 49\\
FF100 &US & $Jul/1971 \sim May/2023$ & 623 & 100\\
FF100MEINV &US & $Jul/1971 \sim May/2023$ & 623 & 100\\
\hline
\end{tabular}
\end{table}

There is a relaxation approach based on the semi-definite programming (SDP Relaxation, \cite{SDrelax}) that intends to address nearly the same mSSRM model \eqref{model:Sharpeiota} of this paper, except for relaxing the cardinality constraint and the long-only constraint. Therefore, this method fails to control cardinality exactly and a simplex projection \cite{simplexproject} should be implemented to ensure feasibility. Its experimental results are also provided in Table \ref{cwsr}, which are not so good as those of mSSRM-PGA.

\vspace{-0.5em}
\begin{table}[htbp]
	\setlength{\tabcolsep}{1mm}
	\small
	\centering
	\caption{Final cumulative wealths (CW) and Sharpe Ratios (SR) of SDP Relaxation and mSSRM-PGA on $6$ data sets ($T=60$).}
	\label{cwsr}
	\scalebox{0.87}{
		\begin{tabular}{|c|c|c|c|c|c|c|c|c|c|c|c|c|}
			\hline
		Data Set&	\multicolumn{2}{c|}{FF25}&	\multicolumn{2}{c|}{FF25EU}&	\multicolumn{2}{c|}{FRENCH32}	&	\multicolumn{2}{c|}{FF49}	&	\multicolumn{2}{c|}{FF100}	&	\multicolumn{2}{c|}{FF100MEINV}	\\
		\hline
		Strategy&	CW&	SR&	CW&	SR&	CW&	SR&	CW&	SR&	CW&	SR&	CW&	SR\\
		\hline
		SDP Relaxation&	323.76 &	0.2340 &	14.25 &	0.1674 &	290.24 &	0.2224 &	280.46 &	0.2151 &	0.51 &	0.0218 &	194.09 &	0.1528 \\
		\hline
		\bf mSSRM-PGA (m=10)&	\bf 615.34&	\bf0.2481&	\bf126.02&	\bf0.2712&	\bf 991.89&	\bf0.2612&	\bf 285.02&	\bf0.2151 &	\bf 527.09&	\bf0.2290&	\bf375.75&	\bf0.2217\\
		\bf mSSRM-PGA (m=15)&	\bf 614.71&	\bf0.2481&	\bf125.19&	\bf0.2708&	\bf 996.32&	\bf0.2615&	 262.54&	0.2135 &	\bf 522.28&	\bf0.2289&	\bf383.44&	\bf0.2232\\
		\bf mSSRM-PGA (m=20)&	\bf 614.70&	\bf0.2481&	\bf125.19&	\bf0.2708&	\bf 996.23&	\bf0.2615& 262.06&	0.2134 &	\bf 515.50&	\bf0.2285&	\bf384.65&	\bf0.2234\\
			\hline
	\end{tabular}}
\end{table}

As for practical issues, Table \ref{runtime} shows the running times for different methods with $T=60$, where mSSRM-PGA achieves competitive computational efficiency. Figure \ref{fig:tc} shows the final cumulative wealths of different methods as the transaction cost rate $\nu$ varies from $0$ to $0.5\%$ with $T=60$, which indicates that mSSRM-PGA can withstand considerable levels of transaction cost rates.

%\vspace{-4em}
\begin{table}[htbp]
	\setlength{\tabcolsep}{1mm}
	\small
	\centering
	\caption{The average running times (in seconds) of different portfolio optimization models for one period on $6$ data sets.}
	\label{runtime}
	\scalebox{0.8}{
		\begin{tabular}{|c|c|c|c|c|c|c|c|c|c|c|c|}
			\hline
			Data Set&	SPOLC&	SSPO&	S1&	S2&	S3&	SSMP&	MAXER&	IPSRM-D&	PLCT&	SDP Relaxation& mSSRM-PGA\\
			\hline
			FF25&	0.0263 &	0.0234 &	5.72E-05&	5.63E-05&	8.46E-05&	0.0122 &	0.0525 &	0.0009 &	0.0020 & 1.0115 &	0.0052 \\
			FF25EU&	0.0222 &	0.0239 &	1.70E-05&	2.89E-05&	2.59E-05&	0.0316 &	0.0588 &	0.0009 &	0.0015 & 0.8178&	0.0059 \\
			FRENCH32&	0.0239 &	0.0250 &	2.93E-05&	2.81E-05&	3.08E-05&	0.0075 &	0.0392 &	0.0012 &	0.0021 & 1.2862&	0.0075 \\
			FF49&	0.0252 &	0.0458 &	2.94E-05&	3.51E-05&	2.82E-05&	0.0083 &	0.0270 &	0.0029 &	0.0034 & 12.3780&	0.0114 \\
			FF100&	0.0306 &	0.0854 &	5.38E-05&	4.48E-05&	4.27E-05&	0.0451 &	0.0458 &	0.0132 &	0.0052 &24.5852&	0.0713 \\
			FF100MEINV&	0.0296 &	0.0864 &	5.08E-05&	4.81E-05&	4.53E-05&	0.0145 &	0.0152 &	0.0144 &	0.0059 & 23.9911 &	0.0696 \\
			\hline
	\end{tabular}}
\end{table}

\begin{figure}[htbp]
	\centering
	\subfloat[FF25]{
	\centering
	\includegraphics[width=0.425\textwidth]{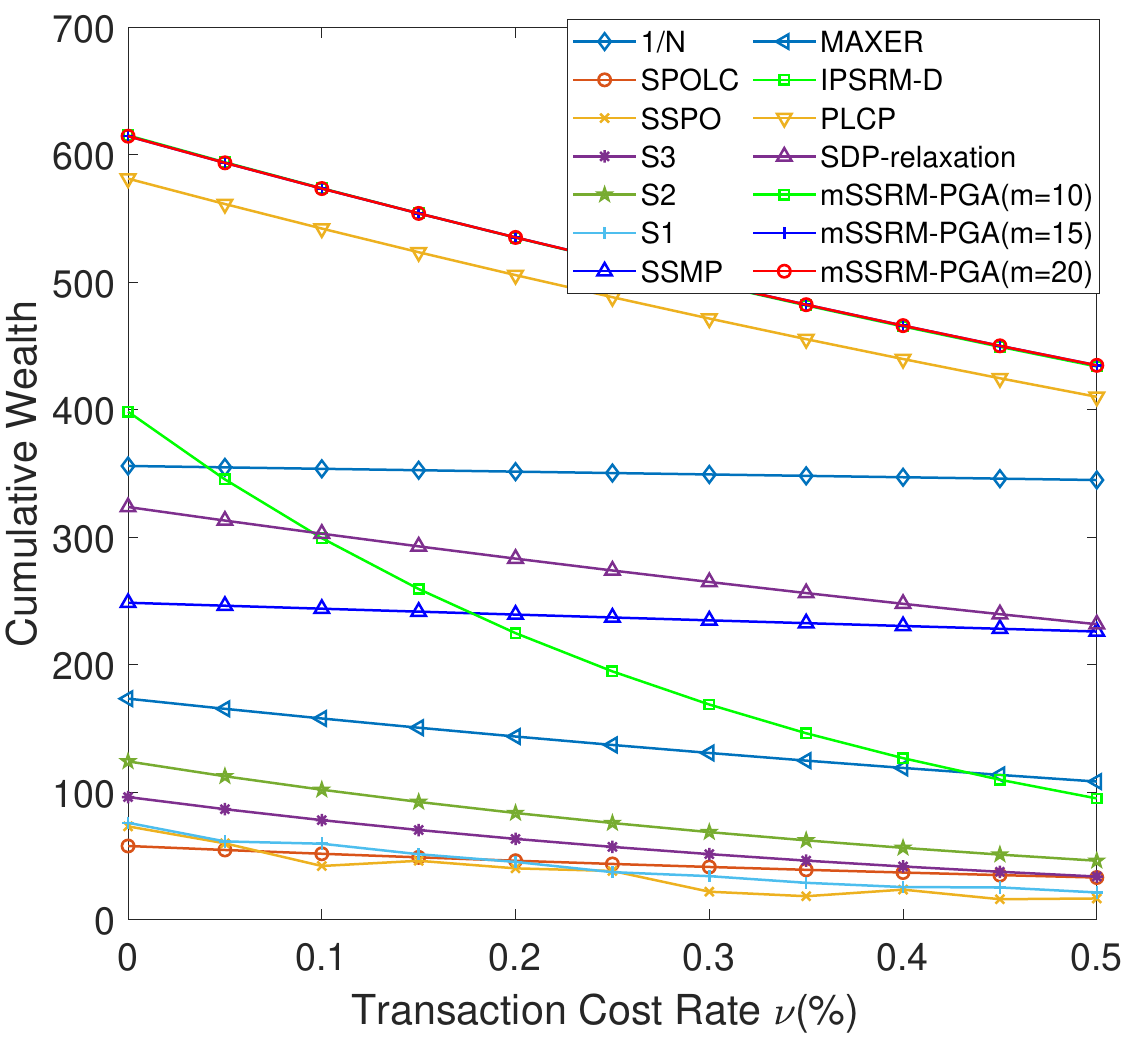}}\hspace{6pt}
	\subfloat[FF25EU]{
	\centering
	\includegraphics[width=0.425\textwidth]{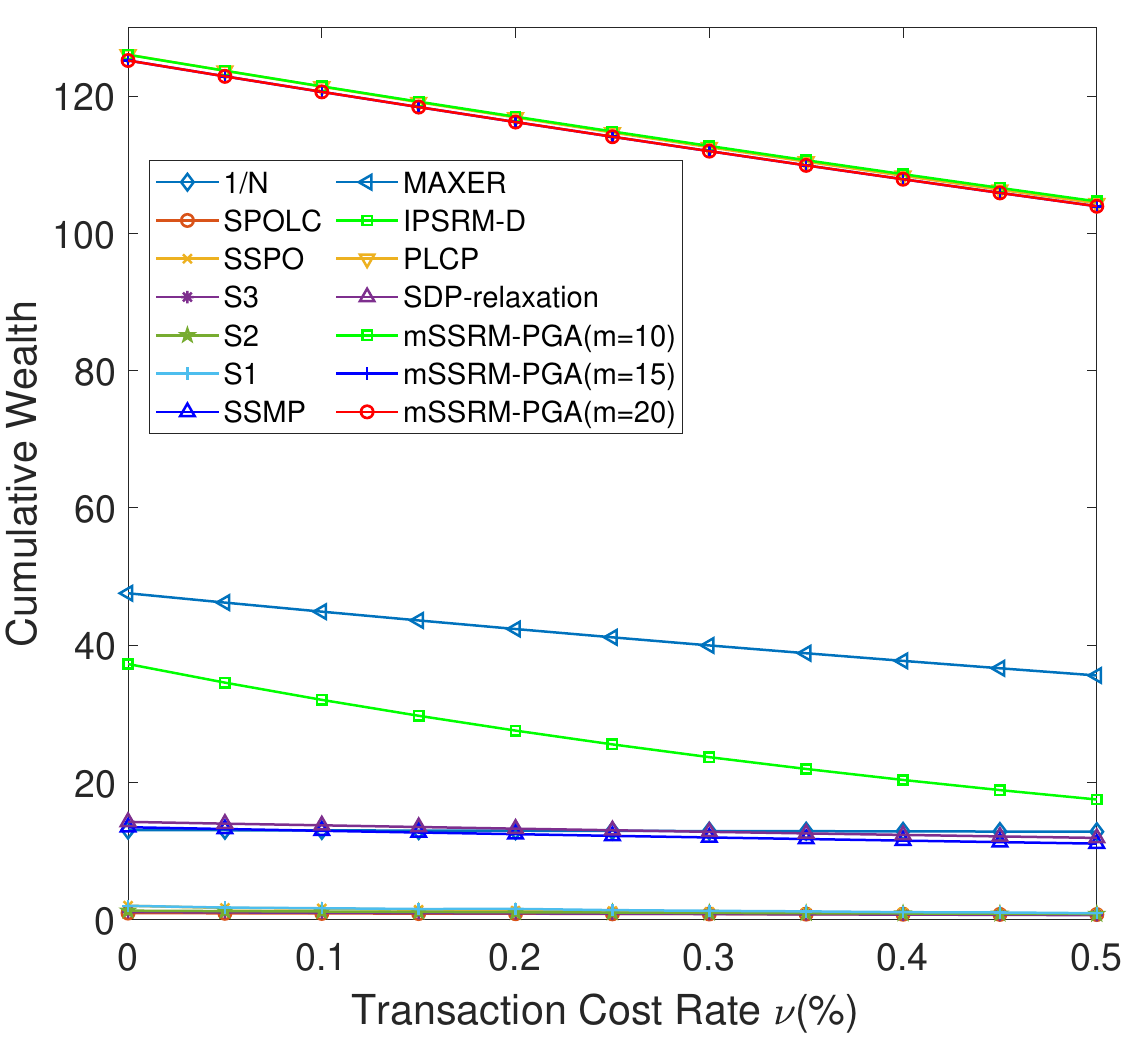}}\\\vspace{-0.5em}
	\subfloat[FF32]{
	\centering
	\includegraphics[width=0.425\textwidth]{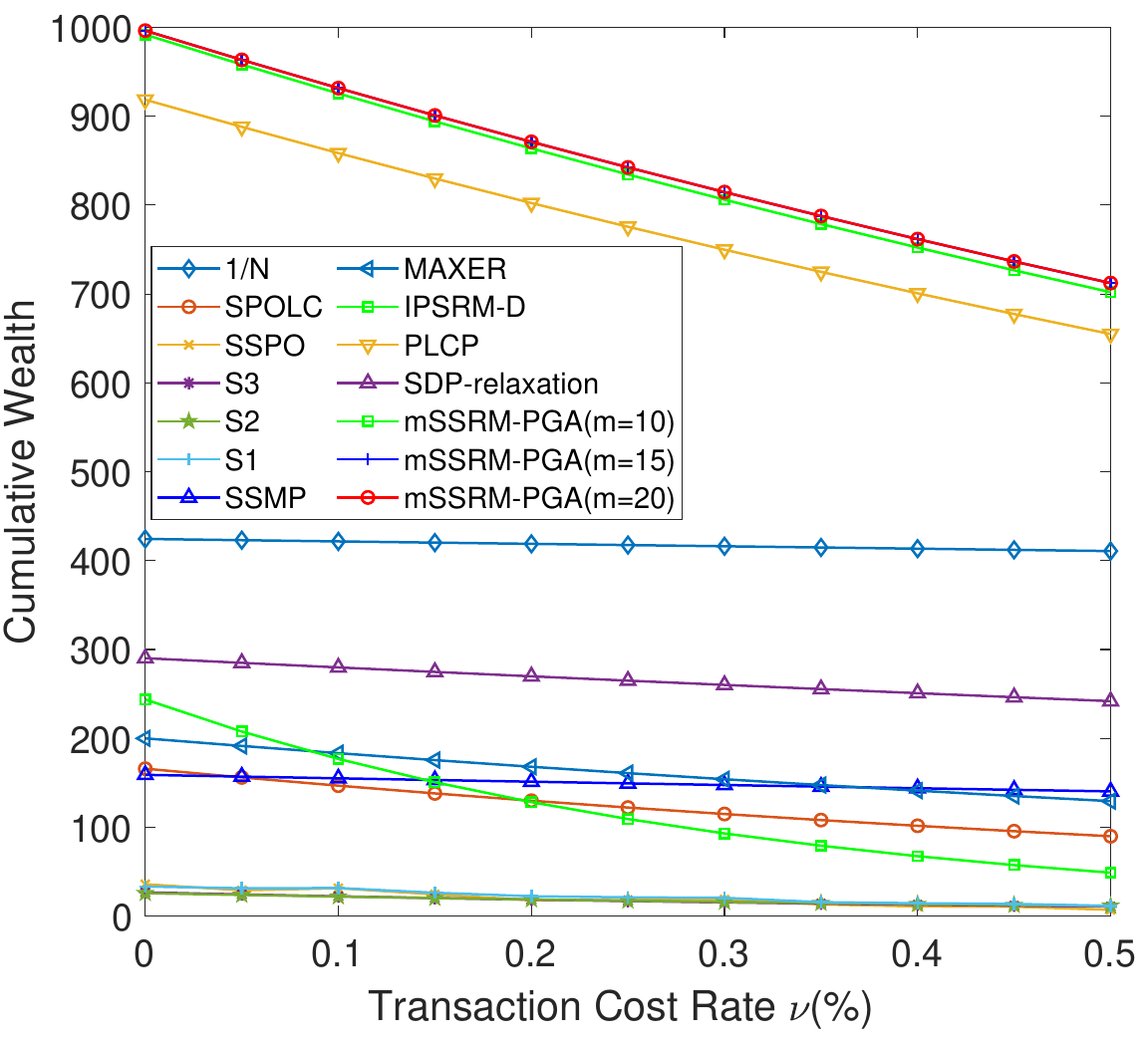}}\hspace{6pt}
	\subfloat[FF49]{
	\centering
	\includegraphics[width=0.425\textwidth]{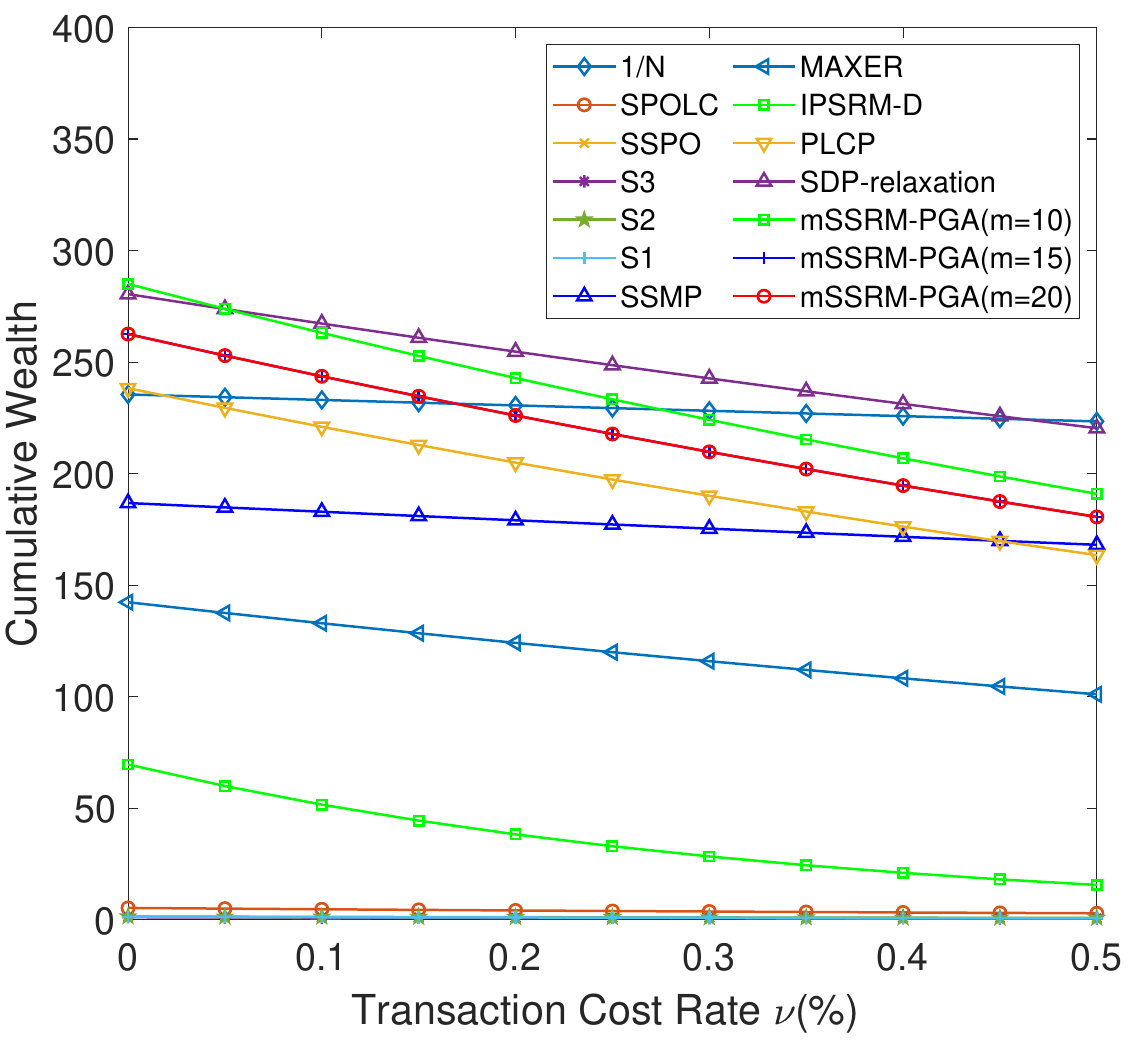}}\\\vspace{-0.5em}
	\subfloat[FF100]{
	\centering
	\includegraphics[width=0.425\textwidth]{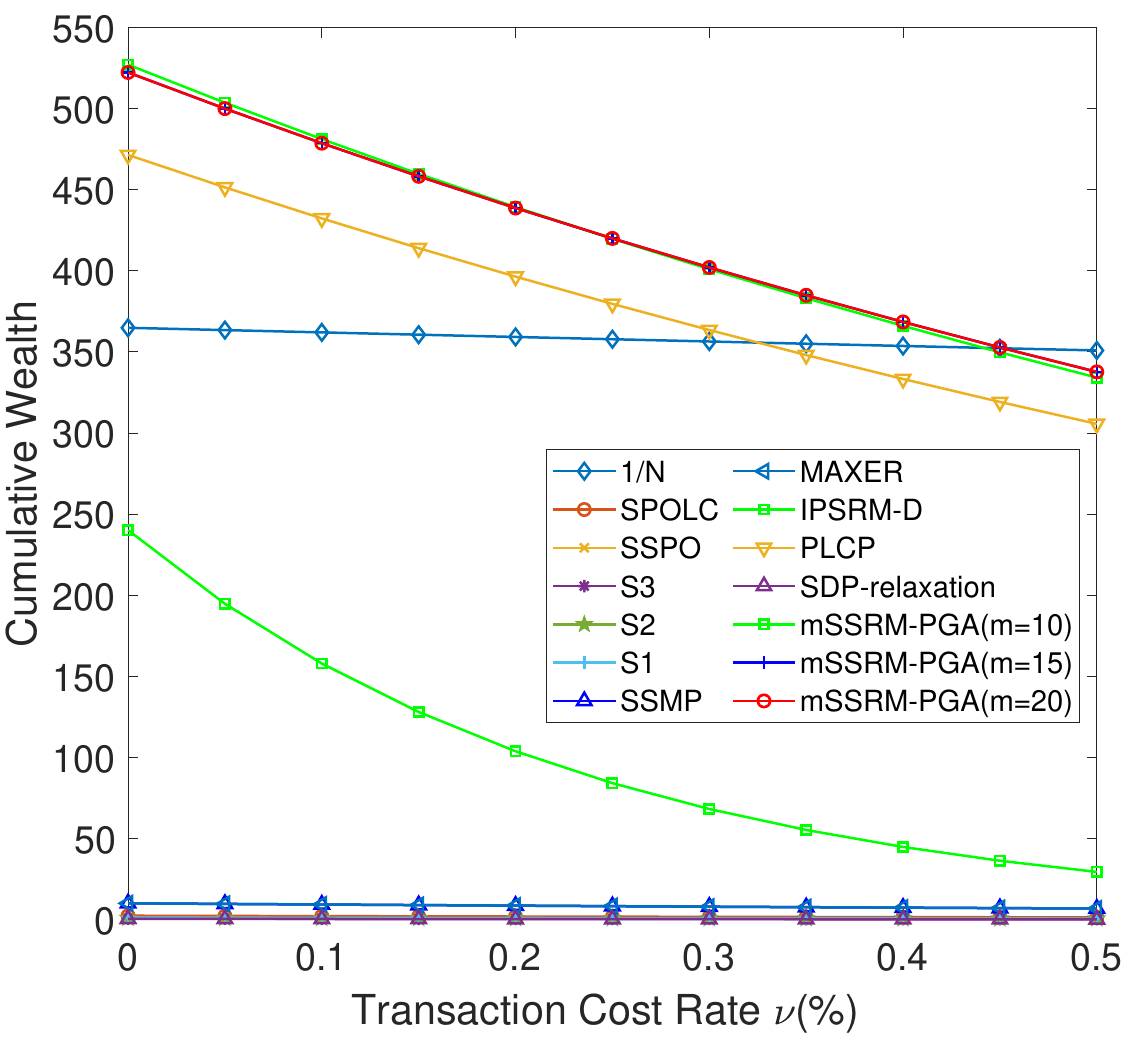}}\hspace{6pt}
	\subfloat[FF100MEINV]{
	\centering
	\includegraphics[width=0.425\textwidth]{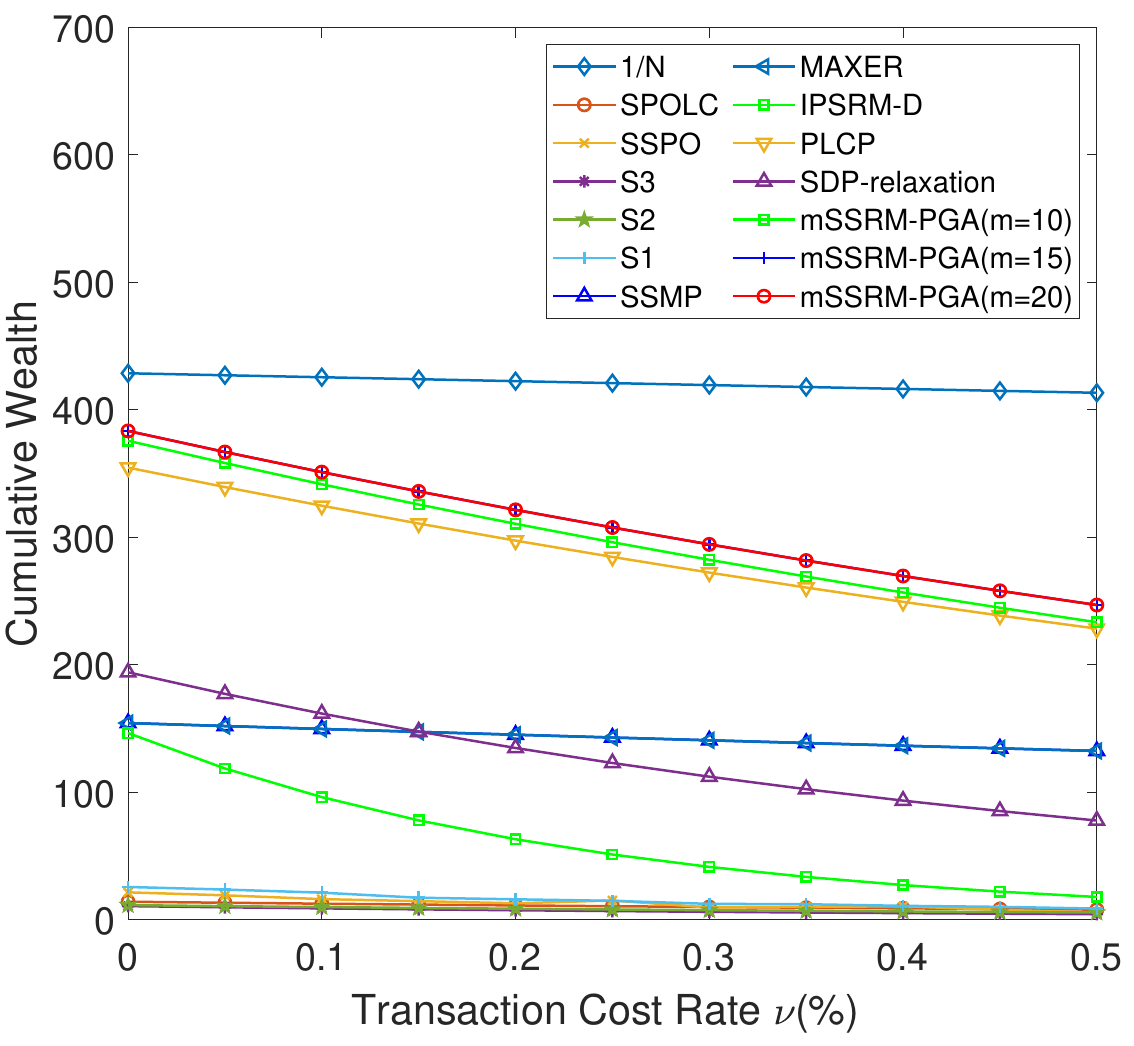}}\vspace{-0.5em}
	\caption{Final cumulative wealths of portfolio optimization methods w.r.t. transaction cost rate $\nu$ on $6$ benchmark data sets.}
	\label{fig:tc}
\end{figure}

\end{document}